\newtheorem{theorem}{Theorem}[section]
\newtheorem{proposition}[theorem]{Proposition}
\newtheorem{lemma}[theorem]{Lemma}
\theoremstyle{definition}
\newtheorem{example}[theorem]{Example}
\theoremstyle{remark}
\newtheorem{remark}[theorem]{Remark}
\numberwithin{equation}{section}
\begin{document}

\title{Ring class invariants over imaginary quadratic fields}

\author{Ick Sun Eum}
\address{Department of Mathematical Sciences, KAIST}
\curraddr{Daejeon 373-1, Korea} \email{zandc@kaist.ac.kr}
\thanks{}

\author{Ja Kyung Koo}
\address{Department of Mathematical Sciences, KAIST}
\curraddr{Daejeon 373-1, Korea} \email{jkkoo@math.kaist.ac.kr}
\thanks{}

\author{Dong Hwa Shin}
\address{Department of Mathematical Sciences, KAIST}
\curraddr{Daejeon 373-1, Korea} \email{shakur01@kaist.ac.kr}
\thanks{}

\subjclass[2000]{Primary 11G16; Secondary 11F03, 11G15, 11R37}

\keywords{Class field theory, complex multiplication, elliptic and
modular units, modular functions.
\newline This research was partially supported by Basic Science Research
Program through the NRF of Korea funded by MEST (2010-0001654). The
third named author is partially supported by TJ Park Postdoctoral
Fellowship.}

\begin{abstract}
We show by adopting Schertz's argument with the Siegel-Ramachandra
invariants that the singular values of certain $\Delta$-quotients
generate ring class fields over imaginary quadratic fields.
\end{abstract}

\maketitle

\section{Introduction}

In number theory ring class fields over imaginary quadratic fields,
more exactly, primitive generators of ring class fields as real
algebraic integers play an important role in the study of certain
quadratic Diophantine equations. For example, let $n$ be a positive
integer and $H_\mathcal{O}$ be the ring class field of the order
$\mathcal{O}=\mathbb{Z}[\sqrt{-n}]$ in the imaginary quadratic field
$K=\mathbb{Q}(\sqrt{-n})$. If $p$ is an odd prime not dividing $n$,
then we have the following assertion:
\begin{eqnarray*}
&&\textrm{$p=x^2+ny^2$ is solvable for some integers $x$ and $y$}\\
&\Longleftrightarrow&\textrm{the Legendre symbol $(-n/p)=1$ and
$f_n(X)\equiv0\pmod{p}$
has an integer solution,}\\
&&\textrm{where $f_n(X)$ is the minimal polynomial of a real
algebraic integer $\alpha$}\\
&&\textrm{which generates $H_\mathcal{O}$ over $K$ (\cite[Theorem
9.2]{Cox}).}
\end{eqnarray*}
\par
Given an imaginary quadratic field $K$ with the ring of integer
$\mathcal{O}_K=\mathbb{Z}[\theta]$ such that $\theta\in\mathfrak{H}$
(= the complex upper half-plane), let $\mathcal{O}=[N\theta,1]$ be
the order of conductor $N$ ($\geq1$) in $K$. We know a classical
result from the theory of complex multiplication that the
$j$-invariant $j(\mathcal{O})=j(N\theta)$ generates the ring class
field $H_\mathcal{O}$ over $K$ (\cite[Theorem 11.1]{Cox} or
\cite[Chapter 10 Theorem 5]{Lang}). We have an algorithm to find the
minimal polynomial (= class polynomial) of such a generator
$j(\mathcal{O})$ (\cite[$\S$13.A]{Cox}), however, its coefficients
are too gigantic to handle for practical use.
\par
Unlike the classical case, Chen-Yui (\cite{C-Y}) constructed a
generator of the ring class field of certain conductor in terms of
the singular value of the Thompson series which is a Hauptmodul for
$\Gamma_0(N)$ or $\Gamma_0^\dagger(N)$, where
$\Gamma_0(N)=\{\gamma\in\mathrm{SL}_2(\mathbb{Z}):\gamma\equiv\left(\begin{smallmatrix}
* & * \\0&*\end{smallmatrix}\right)\pmod{N}\}$ and
$\Gamma_0^\dagger(N)=\langle\Gamma_0(N),\left(\begin{smallmatrix}0 &
-1/\sqrt{N}\\\sqrt{N} & 0\end{smallmatrix}\right)\rangle$ in
$\mathrm{SL}_2(\mathbb{R})$. In like manner, Cox-Mckay-Stevenhagen
(\cite{C-M-S}) showed that certain singular value of a Hauptmodul
for $\Gamma_0(N)$ or $\Gamma_0^\dagger(N)$ with rational Fourier
coefficients generates $H_\mathcal{O}$ over $K$. And, Cho-Koo
(\cite[Corollaries 4.4 and 4.5]{C-K}) recently revisited and
extended these results by using the theory of Shimura's canonical
models and his reciprocity law.
\par
On the other hand, Ramachandra (\cite[Theorem 10]{Ramachandra})
showed that arbitrary finite abelian extension of an imaginary
quadratic field $K$ can be generated over $K$ by a theoretically
beautiful elliptic unit, but his invariant involves overly
complicated product of high powers of singular values of the Klein
forms and singular values of the $\Delta$-function to use in
practice. This motivates our work of finding simpler ring class
invariants in terms of the Siegel-Ramachandra invariant as Lang
pointed out in his book (\cite[p.292]{Lang}) in case of ray class
fields.
\par
More precisely, for any pair $(r_1,r_2)\in\mathbb{Q}^2-\mathbb{Z}^2$
we define the \textit{Siegel function} $g_{(r_1,r_2)}(\tau)$ on
$\mathfrak{H}$ by the following infinite product
\begin{eqnarray}\label{FourierSiegel}
g_{(r_1,r_2)}(\tau)=-q^{(1/2)\mathbf{B}_2(r_1)}e^{\pi
ir_2(r_1-1)}(1-q_z) \prod_{n=1}^{\infty}(1-q^nq_z)(1-q^nq_z^{-1}),
\end{eqnarray}
where $\mathbf{B}_2(X)=X^2-X+1/6$ is the second Bernoulli
polynomial, $q=e^{2\pi i\tau}$ and $q_z=e^{2\pi iz}$ with
$z=r_1\tau+r_2$. As the singular values of Siegel functions we shall
define the Siegel-Ramachandra invariants in $\S$\ref{section2}. And,
by adopting Schertz's idea (\cite[Proof of Theorem 3]{Schertz}) we
shall determine certain class fields over $K$ generated by norms of
the Siegel-Ramachandra invariants (Theorem \ref{generator}). In the
case of ring class fields we are enable to express the norms as the
singular values of certain $\Delta$-quotients (Theorem \ref{main}),
where
\begin{equation}\label{Delta}
\Delta(\tau)=(2\pi i)^{12}q\prod_{n=1}^\infty(1-q^n)^{24}.
\end{equation}
For example, let $N=\prod_{k=1}^n p_k^{e_k}$ be a product of odd
primes $p_k$ which are inert or ramified in $K/\mathbb{Q}$. We
assume that
\begin{equation*}
e_k+1>2/r_k~(k=1,\cdots,n)\quad\textrm{and}\quad
\left\{\begin{array}{ll} \gcd(p_1,\omega_K)=1
& \textrm{if $n=1$}\vspace{0.1cm}\\
\gcd(\prod_{k=1}^np_k,\prod_{k=1}^n(p_k^{2/r_k}-1))=1 & \textrm{if
$n\geq2$,}
\end{array}\right.
\end{equation*}
where $r_k$ is the ramification index of $p_k$ in $K/\mathbb{Q}$ and
$\omega_K$ is the number of roots of unity in $K$. Then, certain
quotient of the singular values $\Delta((N/N_S)\theta)$, where $N_S$
are the products of $p_k$'s, becomes a generator of the ring class
field of the order $[N\theta,1]$ over $K$ (Remark \ref{mainremark}).
This result is a continuation of our previous work with $n=1$
(\cite[$\S$5]{J-K-S}).
\par
Note that Theorems \ref{generator} and \ref{main} heavily depend on
Lemma \ref{character} which requires the assumption
(\ref{hypothesis}). However, in $\S$\ref{section5} we shall develop
a lemma which substitutes for Lemma \ref{character} in order to
release from the assumption (\ref{hypothesis}) to some extent (Lemma
\ref{character3} and Remark \ref{newremark}). For example, let $K$
be an imaginary quadratic field other than $\mathbb{Q}(\sqrt{-1})$,
$\mathbb{Q}(\sqrt{-3})$, and $N$ ($\geq2$) be an integer with prime
factorization $N=\prod_{a=1}^A s_a^{u_a}\prod_{b=1}^B q_b^{v_b}
\prod_{c=1}^C r_c^{w_c}$, where each $s_a$ (respectively, $q_b$ and
$r_c$) splits (respectively, is inert and ramified) in
$K/\mathbb{Q}$. If
\begin{equation*}
4\sum_{a=1}^{A}\frac{1}{(s_a-1)s_a^{u_a-1}}
+2\sum_{b=1}^B\frac{1}{(q_b+1)q_b^{v_b-1}}
+2\sum_{c=1}^C\frac{1}{r_c^{w_c}}<1,
\end{equation*}
then one can also apply Theorem \ref{main} without assuming
(\ref{hypothesis}) (Theorem \ref{main2} and Remark \ref{newremark}).
\par
Lastly, by making use of our simple invariant developed in Theorem
\ref{main} we shall present three examples (Examples \ref{exer1},
\ref{exer2} and \ref{exer3}).

\section{Primitive generators of class fields}\label{section2}

In this section we shall investigate some class fields over
imaginary quadratic fields generated by norms of the
Siegel-Ramachandra invariants.
\par
For a given imaginary quadratic field $K$ we let
\begin{eqnarray*}
d_K&:&\textrm{the discriminant of $K$,}\\
\mathfrak{d}_K&:&\textrm{the different of $K/\mathbb{Q}$,}\\
\mathcal{O}_K&:&\textrm{the ring of integers of $K$,}\\
\omega_K&:&\textrm{the number of root of unity in $K$,}\\
I_K&:&\textrm{the group of fractional ideals of $K$,}\\
P_K&:&\textrm{the subgroup of $I_K$ consisting of principal ideals
of $K$.}
\end{eqnarray*}
And, for a nonzero integral ideal $\mathfrak{f}$ of $K$ we set
\begin{eqnarray*}
I_K(\mathfrak{f})&:&\textrm{the subgroup of $I_K$ consisting of
ideals relatively prime to $\mathfrak{f}$,}\\
P_{K,1}(\mathfrak{f})&:&\textrm{the subgroup of
$I_K(\mathfrak{f})\cap P_K$ generated by the principal ideals
$\alpha\mathcal{O}_K$}\\
&&\textrm{for which $\alpha\in\mathcal{O}_K$ satisfies $\alpha\equiv1\pmod{\mathfrak{f}}$,}\\
\mathrm{Cl}(\mathfrak{f})&:&\textrm{the
ray class group $(\mathrm{modulo}~{\mathfrak{f}}$)},~\textrm{namely,}~I_K(\mathfrak{f})/P_{K,1}(\mathfrak{f}),\\
C_0&:&\textrm{the unit class of $\mathrm{Cl}(\mathfrak{f})$,}\\
\omega(\mathfrak{f})&:&\textrm{the number of roots of unity in $K$
which
are $\equiv1\hspace{-0.2cm}\pmod{\mathfrak{f}}$,}\\
N(\mathfrak{f})&:&\textrm{the smallest positive integer in
$\mathfrak{f}$}.
\end{eqnarray*}
\par
By the \textit{ray class field $K_\mathfrak{f}$ modulo
$\mathfrak{f}$ of $K$} we mean a finite abelan extension of $K$
whose Galois group is isomorphic to $\mathrm{Cl}(\mathfrak{f})$ via
the Artin map $\sigma$, namely
\begin{equation*}
\sigma=\bigg(\frac{K_\mathfrak{f}/K}{\cdot}\bigg)~:~\mathrm{Cl}(\mathfrak{f})
\stackrel{\sim}{\longrightarrow}\mathrm{Gal}(K_\mathfrak{f}/K).
\end{equation*}
In particular, if $\mathfrak{f}=\mathcal{O}_K$, then we simply
denote $K_\mathfrak{f}$ by $H$ and call it the \textit{Hilbert class
field of $K$}. We have a short exact sequence
\begin{equation}\label{exact}
1\longrightarrow
\pi_\mathfrak{f}(\mathcal{O}_K)^*/\pi_\mathfrak{f}(\mathcal{O}_K^*)
\stackrel{\Phi_\mathfrak{f}}{\longrightarrow}\mathrm{Cl}(\mathfrak{f})\longrightarrow\mathrm{Cl}(\mathcal{O}_K)\longrightarrow1,
\end{equation}
where
\begin{equation*}
\pi_\mathfrak{f}~:~\mathcal{O}_K\longrightarrow\mathcal{O}_K/\mathfrak{f}
\end{equation*}
is the natural surjection and $\Phi_\mathfrak{f}$ is induced by the
homomorphism
\begin{eqnarray*}
\widetilde{\Phi}_\mathfrak{f}~:~\pi_\mathfrak{f}(\mathcal{O}_K)^*&\longrightarrow&\mathrm{Cl}(\mathfrak{f})\\
\pi_\mathfrak{f}(x)&\mapsto&[x\mathcal{O}_K],~\textrm{the class
containing $x\mathcal{O}_K$},
\end{eqnarray*}
whose kernel is $\pi_\mathfrak{f}(\mathcal{O}_K^*)$
(\cite[Proposition 3.2.3]{Cohen}).
\par
Let $\chi$ be a character of $\mathrm{Cl}(\mathfrak{f})$. We denote
by $\mathfrak{f}_\chi$ the conductor of $\chi$, namely
\begin{equation*}
\mathfrak{f}_\chi=\gcd(\mathfrak{g}~:~\chi=\psi\circ
(\mathrm{Cl}(\mathfrak{f})\rightarrow\mathrm{Cl}(\mathfrak{g}))~\textrm{for
some character}~\psi~\textrm~\textrm{of}~\mathrm{Cl}(\mathfrak{g})),
\end{equation*}
and let $\chi_0$ be the proper character of
$\mathrm{Cl}(\mathfrak{f}_\chi)$ corresponding to $\chi$. Similarly,
if $\chi'$ is any character of $\pi_\mathfrak{f}(\mathcal{O}_K)^*$,
then the conductor $\mathfrak{f}_{\chi'}$ of $\chi'$ is defined by
\begin{equation*}
\mathfrak{f}_{\chi'}=\gcd(\mathfrak{g}~:~\chi'=\psi'\circ(\pi_\mathfrak{f}(\mathcal{O}_K)^*\rightarrow\pi_\mathfrak{g}(\mathcal{O}_K)^*)~\textrm{for
some
character}~\psi'~\textrm~\textrm{of}~\pi_\mathfrak{g}(\mathcal{O}_K)^*).
\end{equation*}
\par
For a character $\chi$ of $\mathrm{Cl}(\mathfrak{f})$ we denote by
$\widetilde{\chi}$ the character of
$\pi_\mathfrak{f}(\mathcal{O}_K)^*$ defined by
\begin{equation*}
\widetilde{\chi}=\chi\circ\widetilde{\Phi}_\mathfrak{f}.
\end{equation*}
\par
If $\mathfrak{f}=\prod_{k=1}^n \mathfrak{p}_k^{e_k}$, then from the
Chinese remainder theorem we have an isomorphism
\begin{eqnarray*}
\iota~:~\prod_{k=1}^n
\pi_{\mathfrak{p}_k^{e_k}}(\mathcal{O}_K)^*\stackrel{\sim}{\longrightarrow}\pi_\mathfrak{f}(\mathcal{O}_K)^*,
\end{eqnarray*}
and natural injections and surjections
\begin{equation*}
\iota_k~:~\pi_{\mathfrak{p}_k^{e_k}}(\mathcal{O}_K)^*\hookrightarrow
\prod_{\ell=1}^n
\pi_{\mathfrak{p}_\ell^{e_\ell}}(\mathcal{O}_K)^*\quad\textrm{and}\quad
v_k~:~\prod_{\ell=1}^n
\pi_{\mathfrak{p}_\ell^{e_\ell}}(\mathcal{O}_K)^*\rightarrow
\pi_{\mathfrak{p}_k^{e_k}}(\mathcal{O}_K)^*\quad(k=1,\cdots,n),
\end{equation*}
respectively. Furthermore, we consider the characters
$\widetilde{\chi}_k$ of
$\pi_{\mathfrak{p}_k^{e_k}}(\mathcal{O}_K)^*$ defined by
\begin{equation*}
\widetilde{\chi}_k=\widetilde{\chi}\circ\iota\circ\iota_k\quad(k=1,\cdots,n).
\end{equation*}

\begin{lemma}\label{conductor}
The notation being as above, we have
\begin{itemize}
\item[(i)] $\mathfrak{f}_{\widetilde{\chi}}=\mathfrak{f}_\chi$.
\item[(ii)]
$\widetilde{\chi}\circ\iota=\prod_{k=1}^n\widetilde{\chi}_k\circ
v_k$.
\item[(iii)] If $\widetilde{\chi}_k\neq1$, then
$\mathfrak{p}_k|\mathfrak{f}_{\widetilde{\chi}}$.
\end{itemize}
\end{lemma}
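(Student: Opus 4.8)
The plan is to prove the three parts of Lemma~\ref{conductor} by unwinding the definitions of the conductors and using the structure of the short exact sequence~(\ref{exact}) together with the Chinese remainder theorem.

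First, for part~(i), I would show the two divisibility relations $\mathfrak{f}_{\widetilde{\chi}}\mid\mathfrak{f}_\chi$ and $\mathfrak{f}_\chi\mid\mathfrak{f}_{\widetilde{\chi}}$. The key observation is that the natural surjections $\mathrm{Cl}(\mathfrak{f})\to\mathrm{Cl}(\mathfrak{g})$ and $\pi_\mathfrak{f}(\mathcal{O}_K)^*\to\pi_\mathfrak{g}(\mathcal{O}_K)^*$ (for $\mathfrak{g}\mid\mathfrak{f}$) are compatible with the maps $\widetilde{\Phi}_\mathfrak{f}$, $\widetilde{\Phi}_\mathfrak{g}$ appearing in~(\ref{exact}); that is, the relevant square commutes. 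Hence if $\chi$ factors through $\mathrm{Cl}(\mathfrak{g})$ via some $\psi$, then $\widetilde{\chi}=\chi\circ\widetilde{\Phi}_\mathfrak{f}$ factors through $\pi_\mathfrak{g}(\mathcal{O}_K)^*$ via $\widetilde{\psi}=\psi\circ\widetilde{\Phi}_\mathfrak{g}$, giving $\mathfrak{f}_{\widetilde{\chi}}\mid\mathfrak{f}_\chi$. For the reverse direction I would use that $\widetilde{\Phi}_\mathfrak{f}$ is surjective onto the image of $\Phi_\mathfrak{f}$, so a factorization of $\widetilde{\chi}$ through $\pi_\mathfrak{g}(\mathcal{O}_K)^*$ descends to a factorization of $\chi_0$ on the relevant subgroup; combined with the fact that $\chi$ and $\widetilde{\chi}$ carry the same data on the kernel $\pi_\mathfrak{f}(\mathcal{O}_K)^*/\pi_\mathfrak{f}(\mathcal{O}_K^*)$ of~(\ref{exact}), this forces $\mathfrak{f}_\chi\mid\mathfrak{f}_{\widetilde{\chi}}$. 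Reference to \cite[Proposition 3.2.3]{Cohen} or \cite[Chapter IV]{Lang} should streamline this.

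For part~(ii), this is essentially the multiplicativity of characters under the Chinese remainder decomposition. Since $\iota$ is an isomorphism and any element $\prod_k x_k$ of $\prod_k\pi_{\mathfrak{p}_k^{e_k}}(\mathcal{O}_K)^*$ satisfies $\iota(\prod_k x_k)=\prod_k\iota(\iota_k(x_k))$ with $v_j(\iota_k(x_k))=x_k$ if $j=k$ and $=1$ otherwise, I would simply evaluate $\widetilde{\chi}\circ\iota$ on such an element: it equals $\prod_k\widetilde{\chi}(\iota(\iota_k(x_k)))=\prod_k\widetilde{\chi}_k(x_k)=\prod_k(\widetilde{\chi}_k\circ v_k)(\prod_j x_j)$. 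This is a direct computation once the compatibility $\iota_k$ versus $v_k$ is written down.

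For part~(iii), suppose $\mathfrak{p}_k\nmid\mathfrak{f}_{\widetilde{\chi}}$; I want to derive $\widetilde{\chi}_k=1$, which is the contrapositive. Writing $\mathfrak{f}_{\widetilde{\chi}}=\prod_\ell\mathfrak{p}_\ell^{a_\ell}$ with $a_k=0$, the character $\widetilde{\chi}$ factors through $\pi_{\mathfrak{f}_{\widetilde{\chi}}}(\mathcal{O}_K)^*$, and under the Chinese remainder identification this product has no $\mathfrak{p}_k$-component; by part~(ii) the $\mathfrak{p}_k$-component character $\widetilde{\chi}_k$ must then be trivial. I expect the main obstacle to be part~(i), specifically the delicate bookkeeping of how conductors of characters on $\mathrm{Cl}(\mathfrak{f})$ relate to conductors of the associated characters on $\pi_\mathfrak{f}(\mathcal{O}_K)^*$ through the non-split exact sequence~(\ref{exact})---one must be careful that lowering the modulus on one side is genuinely equivalent to lowering it on the other, which ultimately rests on the universal property of the ray class group and the compatibility of Artin maps at different moduli.
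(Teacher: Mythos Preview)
Your proposal is correct and follows essentially the same route as the paper. The paper dismisses (i) and (ii) as ``immediate by the definitions'' (so your more careful treatment of (i) via the commuting square of reductions and the exact sequence~(\ref{exact}) is more detailed than what the authors actually write), and for (iii) the paper carries out precisely your contrapositive as a short diagram chase: assuming $\mathfrak{p}_n\nmid\mathfrak{f}_{\widetilde{\chi}}$, they factor $\widetilde{\chi}$ through $\pi_{\mathfrak{f}_{\widetilde{\chi}}}(\mathcal{O}_K)^*$ (which has no $\mathfrak{p}_n$-component) and evaluate at an element $\iota_n(\sigma_n)$ to obtain $\widetilde{\chi}_n(\sigma_n)=1$, the same mechanism you describe via part~(ii) and the Chinese remainder decomposition.
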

\begin{proof}
(i) and (ii) are immediate by the definitions of conductors and
$\widetilde{\chi}$, $\widetilde{\chi}_k$, $\iota$, $v_k$.\\
(iii) Without loss of generality we may assume
$\widetilde{\chi}_n\neq1$. Suppose on the contrary
$\mathfrak{p}_n\nmid\mathfrak{f}_{\widetilde{\chi}}$. Then, by the
definition of $\mathfrak{f}_{\widetilde{\chi}}$ there is a character
$\psi'$ of $\mathrm{Cl}(\mathfrak{f}_{\widetilde{\chi}})$ which
makes the following diagram commutative:
\begin{equation*}
\begindc{\commdiag}[50]

\obj(7,3)[E]{$\prod_{k=1}^{n-1}\pi_{\mathfrak{p}_k^{e_k}}(\mathcal{O}_K)^*$}

\obj(3,3)[D]{$\prod_{k=1}^n\pi_{\mathfrak{p}_k^{e_k}}(\mathcal{O}_K)^*$}

\obj(6,2)[C]{$\pi_{\mathfrak{f}_{\widetilde{\chi}}}(\mathcal{O}_K)^*$}

\obj(4,2)[B]{$\pi_\mathfrak{f}(\mathcal{O}_K)^*$}

\obj(5,1)[A]{$\mathbb{C}^*$}

\obj(9,3)[F]{\hspace{-1.5cm}$\cong\pi_{\mathfrak{f}\mathfrak{p}^{-e_n}}(\mathcal{O}_K)^*$}

\mor{B}{A}{$\widetilde{\chi}$}[\atright,\solidarrow]

\mor{C}{A}{$\psi'$}

\mor{D}{B}{$\iota$}[\atright,\solidarrow]

\mor{E}{C}{$B$}

\mor{D}{E}{$A$}

\mor{B}{C}{$C$}
\enddc
\end{equation*}
where $A$, $B$ and $C$ are natural surjections. If $\sigma_n$ is an
element of $\pi_{\mathfrak{p}_n^{e_n}}(\mathcal{O}_K)^*$ such that
$\widetilde{\chi}_n(\sigma_n)\neq1$, then
\begin{eqnarray*}
1&\neq&\widetilde{\chi}_n(\sigma_n)=\widetilde{\chi}\circ\iota\circ\iota_n(\sigma_n)
=\widetilde{\chi}\circ\iota(1,\cdots,1,\sigma_n)\\
&=&(\psi'\circ C)\circ\iota(1,\cdots,1,\sigma_n)=\psi'\circ B\circ
A(1,\cdots,1,\sigma_n)=\psi'\circ B(1,\cdots,1)=1,
\end{eqnarray*}
which renders a contradiction. Therefore,
$\mathfrak{p}_n|\mathfrak{f}_{\widetilde{\chi}}$.
\end{proof}

If $\mathfrak{f}\neq\mathcal{O}_K$ and
$C\in\mathrm{Cl}(\mathfrak{f})$, we take any integral ideal
$\mathfrak{c}$ in $C$. Let $\mathfrak{f}\mathfrak{c}^{-1}=[z_1,z_2]$
with $z=z_1/z_2\in\mathfrak{H}$. We define the
\textit{Siegel-Ramachandra invariant} (of conductor $\mathfrak{f}$
at $C$) by
\begin{equation*}
g_\mathfrak{f}(C)=
g_{(a/N(\mathfrak{f}),b/N(\mathfrak{f}))}(z)^{12N(\mathfrak{f})},
\end{equation*}
where $a$, $b$ are integers such that
$1=(a/N(\mathfrak{f}))z_1+(b/N(\mathfrak{f}))z_2$. This value
depends only on the class $C$ and belongs to the ray class field
$K_\mathfrak{f}$ (\cite[Chapter 2 Proposition 1.3 and Chapter 11
Theorem 1.1]{K-L}). And, there is a well-known transformation
formula
\begin{equation}\label{Artin}
g_\mathfrak{f}(C_1)^{\sigma(C_2)}=g_\mathfrak{f}(C_1C_2)
\quad(C_1,C_2\in\mathrm{Cl}(\mathfrak{f}))
\end{equation}
(\cite[p.236]{K-L}).
\par
For a nontrivial character $\chi$ of $\mathrm{Cl}(\frak{f})$ with
$\mathfrak{f}\neq\mathcal{O}_K$, we define the \textit{Stickelberger
element} as
\begin{equation*}
S_\mathfrak{f}(\chi,g_\mathfrak{f})
=\sum_{C\in\mathrm{Cl}(\mathfrak{f})}
\chi(C)\log|g_\mathfrak{f}(C)|,
\end{equation*}
and consider the \textit{$L$-function}
\begin{equation*}
L_\mathfrak{f}(s,\chi)=\sum_{\mathfrak{a}\neq0~:~\textrm{integral
ideals of
$K$}}\frac{\chi(\mathfrak{a})}{\mathbf{N}_{K/\mathbb{Q}}(\mathfrak{a})^s}
\quad(s\in\mathbb{C}).
\end{equation*}
From the second Kronecker limit formula (\cite[Chapter 22 Theorem
1]{Lang}) we get the following proposition.

\begin{proposition}\label{LandS}
The notation being as above, if
$\mathfrak{f}_\chi\neq\mathcal{O}_K$, then
\begin{equation*}
\prod_{\mathfrak{p}|\mathfrak{f},\mathfrak{p}\nmid\mathfrak{f}_\chi}
(1-\overline{\chi}_0(\mathfrak{p})) L_{\mathfrak{f}_\chi}(1,\chi_0)=
\frac{\pi}{3\omega(\mathfrak{f})N(\mathfrak{f})\tau(\overline{\chi}_0)\sqrt{{|d_K|}}}
S_{\mathfrak{f}}(\overline{\chi},g_{\mathfrak{f}}),
\end{equation*}
where
\begin{eqnarray*}
\tau(\overline{\chi}_0)=~-\hspace{-1cm}
\sum_{\begin{smallmatrix}x\in\mathcal{O}_K\\x\hspace{-0.2cm}\mod{\mathfrak{f}}\\\gcd(x\mathcal{O}_K,\mathfrak{f}_\chi)=\mathcal{O}_K\end{smallmatrix}}
\hspace{-0.5cm}\overline{\chi}_0([x\gamma\mathfrak{d}_K\mathfrak{f}_\chi])e^{2\pi
i\mathbf{Tr}_{K/\mathbb{Q}}(x\gamma)}
\end{eqnarray*}
with $\gamma$ any element of $K$ such that
$\gamma\mathfrak{d}_K\mathfrak{f}_\chi$ is an integral ideal
relatively prime to $\mathfrak{f}$.
\end{proposition}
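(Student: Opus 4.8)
The plan is to derive Proposition \ref{LandS} from the second Kronecker limit formula, which expresses $L_{\mathfrak{f}_\chi}(1,\chi_0)$ as a weighted sum of logarithms of absolute values of Siegel functions, and then to bridge the gap between the level $\mathfrak{f}_\chi$ of the proper character $\chi_0$ and the level $\mathfrak{f}$ appearing in our Stickelberger element $S_\mathfrak{f}(\overline{\chi},g_\mathfrak{f})$. First I would fix a $\gamma\in K$ with $\gamma\mathfrak{d}_K\mathfrak{f}_\chi$ an integral ideal prime to $\mathfrak{f}$, and apply \cite[Chapter 22 Theorem 1]{Lang} to rewrite $L_{\mathfrak{f}_\chi}(1,\chi_0)$ as a sum over $\mathrm{Cl}(\mathfrak{f}_\chi)$ of $\chi_0(C)\log|g_{\mathfrak{f}_\chi}(C)|$ times the appropriate archimedean constant; writing out the Gauss-type sum that occurs there produces the factor $\tau(\overline{\chi}_0)$ together with the $\pi/(3\omega(\mathfrak{f}_\chi)N(\mathfrak{f}_\chi)\sqrt{|d_K|})$ factor. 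This step is essentially bookkeeping with the known formula, once the normalization of the Siegel-Ramachandra invariant $g_\mathfrak{f}(C)=g_{(a/N(\mathfrak{f}),b/N(\mathfrak{f}))}(z)^{12N(\mathfrak{f})}$ is matched against the normalization used in \cite{Lang}.

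Next I would pass from $\mathrm{Cl}(\mathfrak{f}_\chi)$ to $\mathrm{Cl}(\mathfrak{f})$. Using the surjection $\mathrm{Cl}(\mathfrak{f})\to\mathrm{Cl}(\mathfrak{f}_\chi)$ and the factorization $\chi=\chi_0\circ(\mathrm{Cl}(\mathfrak{f})\to\mathrm{Cl}(\mathfrak{f}_\chi))$, I can lift the sum: for each class $\overline{C}\in\mathrm{Cl}(\mathfrak{f}_\chi)$ the fiber in $\mathrm{Cl}(\mathfrak{f})$ has constant cardinality, so $S_\mathfrak{f}(\overline{\chi},g_\mathfrak{f})$ differs from $\sum_{\overline C}\overline{\chi}_0(\overline C)\log|g_{\mathfrak{f}_\chi}(\overline C)|$ by a factor measuring how $g_\mathfrak{f}$ over a class relates to $g_{\mathfrak{f}_\chi}$ over its image. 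The standard distribution relation for Siegel functions — summing $g_{(r_1,r_2)}$ over the fibers of $\mathbb{Q}^2/\mathbb{Z}^2\to\mathbb{Q}^2/\mathbb{Z}^2$ under multiplication by the primes dividing $\mathfrak{f}$ but not $\mathfrak{f}_\chi$ — is exactly what feeds in the Euler-type factor $\prod_{\mathfrak{p}|\mathfrak{f},\,\mathfrak{p}\nmid\mathfrak{f}_\chi}(1-\overline{\chi}_0(\mathfrak{p}))$ on the left-hand side. I would invoke this distribution relation (it is in \cite[Chapter 2]{K-L} or \cite[Chapter 19]{Lang}) prime by prime, tracking how each missing prime contributes one factor $(1-\overline{\chi}_0(\mathfrak{p}))$.

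Finally I would reconcile the two normalizing constants: the formula as stated has $\omega(\mathfrak{f})N(\mathfrak{f})$ in the denominator rather than $\omega(\mathfrak{f}_\chi)N(\mathfrak{f}_\chi)$. The discrepancy between $N(\mathfrak{f})$ and $N(\mathfrak{f}_\chi)$, and between $\omega(\mathfrak{f})$ and $\omega(\mathfrak{f}_\chi)$, is absorbed precisely because raising the Siegel function to the $12N(\mathfrak{f})$ power in the definition of $g_\mathfrak{f}(C)$ introduces compensating powers; I would check that the exponent bookkeeping makes the stated constant come out right. The main obstacle I anticipate is not any single deep step but getting all three of these normalizations — the power $12N(\mathfrak{f})$ in $g_\mathfrak{f}$, the Gauss sum $\tau(\overline{\chi}_0)$ (in particular the choice of $\gamma$ and the range $x\bmod\mathfrak{f}$ with $\gcd(x\mathcal{O}_K,\mathfrak{f}_\chi)=\mathcal{O}_K$ rather than $x\bmod\mathfrak{f}_\chi$), and the Euler factors from the distribution relation — to line up simultaneously with the correct signs and with no stray powers of $\omega$ or $N$. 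I would therefore carry a running constant through each reduction and only at the very end compare it with $\pi/(3\omega(\mathfrak{f})N(\mathfrak{f})\tau(\overline{\chi}_0)\sqrt{|d_K|})$.
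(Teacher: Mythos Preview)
Your outline is mathematically sound, but you should know that the paper does not actually prove this proposition: its entire proof reads ``See \cite[Chapter 22 Theorem 2]{Lang} and \cite[Chapter 11 Theorem 2.1]{K-L}.'' In other words, the authors treat the identity as a known result and simply cite the literature, whereas you are sketching the derivation that lies behind those references. Your three-step plan (apply the second Kronecker limit formula at level $\mathfrak{f}_\chi$, lift to level $\mathfrak{f}$ via the distribution relations for Siegel functions to produce the Euler factor, then reconcile the $12N(\mathfrak{f})$-power and $\omega$-normalizations) is exactly the content of the cited theorems, so there is no discrepancy in approach---you are just unpacking what the paper leaves packed.
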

\begin{proof}
See \cite[Chapter 22 Theorem 2]{Lang} and \cite[Chapter 11 Theorem
2.1]{K-L}.
\end{proof}

\begin{remark}\label{remarkLandS}
\begin{itemize}
\item[(i)] The product factor
$\prod_{\mathfrak{p}|\mathfrak{f},\mathfrak{p}\nmid\mathfrak{f}_\chi}
(1-\overline{\chi}_0(\mathfrak{p}))$ is called the \textit{Euler
factor of $\chi$}. If there is no such $\mathfrak{p}$ with
$\mathfrak{p}|\mathfrak{f}$ and
$\mathfrak{p}\nmid\mathfrak{f}_\chi$, then it is understood to be
$1$.
\item[(ii)] As is well-known, $L_{\mathfrak{f}_\chi}(1,\chi_0)\neq0$
(\cite[Chapter IV Proposition 5.7]{Janusz}).
\end{itemize}
\end{remark}

\begin{lemma}\label{characterextend}
Let $A\varsubsetneq B$ be finite abelian groups, $b\in B-A$ and
$\chi$ be a character of $A$. Let $m$ be the order of the coset $bA$
in the quotient group $B/A$. Then we can extend $\chi$ to a
character $\psi$ of $B$ such that $\psi(b)$ is any $m^\textrm{th}$
root of $\chi(b^m)$.
\end{lemma}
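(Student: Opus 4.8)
The plan is to build the extension in two stages: first extend $\chi$ to the subgroup $\langle A, b\rangle$ generated by $A$ and $b$, and then invoke the standard fact that a character of a subgroup of a finite abelian group always extends (non-uniquely) to the whole group. For the first stage, note that $\langle A, b\rangle = \bigcup_{i=0}^{m-1} b^i A$, since $m$ is the order of $bA$ in $B/A$; in particular $b^m \in A$, so $\chi(b^m)$ is a well-defined element of $\mathbb{C}^*$, and since $B$ is finite it is a root of unity. Fix any $m^{\textrm{th}}$ root $\zeta$ of $\chi(b^m)$ in $\mathbb{C}^*$. I would then define $\psi_0$ on $\langle A, b\rangle$ by
\begin{equation*}
\psi_0(b^i a) = \zeta^i \chi(a) \qquad (0 \le i \le m-1,\ a \in A).
\end{equation*}

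The key step is to check that $\psi_0$ is well defined and multiplicative. For well-definedness, suppose $b^i a = b^j a'$ with $0 \le i, j \le m-1$ and $a, a' \in A$; then $b^{i-j} \in A$, which forces $i = j$ by minimality of $m$ (the exponents lie in a range of length $m$), and hence $a = a'$, so there is nothing to reconcile. Multiplicativity: given $b^i a$ and $b^j a'$, write $i + j = qm + \ell$ with $0 \le \ell \le m-1$ and $q \in \{0,1\}$; then $b^i a \cdot b^j a' = b^\ell (b^{qm} a a')$, and $b^{qm} a a' \in A$, so
\begin{equation*}
\psi_0(b^i a \cdot b^j a') = \zeta^\ell \chi(b^{qm} a a') = \zeta^\ell \chi(b^m)^q \chi(a)\chi(a') = \zeta^\ell (\zeta^m)^q \chi(a)\chi(a') = \zeta^{i+j}\chi(a)\chi(a'),
\end{equation*}
which equals $\psi_0(b^i a)\psi_0(b^j a')$. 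Restricting to $i = 0$ shows $\psi_0|_A = \chi$, and taking $i = 1$, $a = 1$ gives $\psi_0(b) = \zeta$, the prescribed root.

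For the second stage I would cite the classical structure result: if $H \le G$ are finite abelian groups, every character of $H$ extends to a character of $G$. This follows by induction on $[G:H]$ — pick $g \in G \setminus H$, apply the construction just carried out (with $(H, g)$ in place of $(A, b)$) to extend across $\langle H, g\rangle$, using that $\mathbb{C}^*$ is divisible so the required root of $\chi(g^n)$ exists, and repeat. Applying this with $H = \langle A, b\rangle$ and $G = B$ yields a character $\psi$ of $B$ extending $\psi_0$, hence extending $\chi$, with $\psi(b) = \zeta$. Since $\zeta$ was an arbitrary $m^{\textrm{th}}$ root of $\chi(b^m)$, this proves the lemma. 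The only place requiring any care is the well-definedness and multiplicativity of $\psi_0$ on the cyclic-over-$A$ piece; the rest is a routine appeal to divisibility of $\mathbb{C}^*$, so I do not expect a genuine obstacle here.
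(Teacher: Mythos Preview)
Your proof is correct and follows essentially the same approach as the paper: reduce to the subgroup $\langle A,b\rangle$, define $\psi(b^i a)=\zeta^i\chi(a)$ with $\zeta^m=\chi(b^m)$, verify it is a well-defined character, and then extend to $B$ by the standard divisibility argument. The paper's version is terser---it simply asserts that well-definedness and multiplicativity are ``readily shown'' and that the reduction to $B=\langle A,b\rangle$ suffices---while you have spelled out exactly those checks, but the underlying argument is identical.
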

\begin{proof}
It suffices to prove the case $B=\langle A,b\rangle$. Let $\zeta$ be
any $m^\textrm{th}$ root of $\chi(b^m)$. Define a map
\begin{eqnarray*}
\psi~:~\langle A,b\rangle&\longrightarrow&\mathbb{C}^*\\
ab^k&\mapsto&\chi(a)\zeta^k\quad(a\in A).
\end{eqnarray*}
Using the fact $\zeta^m=\chi(b^m)$ one can readily show that $\psi$
is a well-defined character of $\langle A,b\rangle$ which extends
$\chi$ and also satisfies $\psi(b)=\zeta$.
\end{proof}

\begin{lemma}\label{character}
Let $K$ be an imaginary quadratic field and
$\mathfrak{f}=\prod_{k=1}^n\mathfrak{p}_k^{e_k}$ be a nontrivial
ideal of $K$. Let $L$ be a finite abelian extension of $K$ such that
$K\subsetneq L \subseteq K_\mathfrak{f}$. For an intermediate field
$F$ between $K$ and $K_\mathfrak{f}$ we denote by
$\mathrm{Cl}(K_\mathfrak{f}/F)$ the subgroup of
$\mathrm{Cl}(\mathfrak{f})$ corresponding to
$\mathrm{Gal}(K_\mathfrak{f}/F)$ via the Artin map. Let
\begin{eqnarray*}
\widehat{\varepsilon}_k&=&\#~\mathrm{Ker}(\textrm{the natural
projection}~\widehat{\rho}_k~:~\pi_\mathfrak{f}(\mathcal{O}_K)^*/\pi_\mathfrak{f}(\mathcal{O}_K^*)
\rightarrow
\pi_{\mathfrak{f}\mathfrak{p}_k^{-e_k}}(\mathcal{O}_K)^*/\pi_{\mathfrak{f}\mathfrak{p}_k^{-e_k}}(\mathcal{O}_K^*)),\\
\varepsilon_k&=&\#~\mathrm{Ker}(\textrm{the natural
projection}~\rho_k~:~\pi_\mathfrak{f}(\mathcal{O}_K)^*/\pi_\mathfrak{f}(\mathcal{O}_K^*)
\longrightarrow
\pi_{\mathfrak{p}_k^{e_k}}(\mathcal{O}_K)^*/\pi_{\mathfrak{p}_k^{e_k}}(\mathcal{O}_K^*))
\end{eqnarray*}
for each $k=1,\cdots,n$. Assume that
\begin{equation}\label{hypothesis}
\begin{array}{l}
\textrm{for each $k=1,\cdots,n$ there is an odd prime $\nu_k$ such that}\\
\textrm{$\nu_k\nmid\varepsilon_k$ and
$\mathrm{ord}_{\nu_k}(\widehat{\varepsilon}_k)>
\mathrm{ord}_{\nu_k}(\#~\mathrm{Cl}(K_\mathfrak{f}/L))$.}
\end{array}
\end{equation}
If $D$ is a class in
$\mathrm{Cl}(\mathfrak{f})-\mathrm{Cl}(K_\mathfrak{f}/L)$, then
there exists a character $\chi$ of $\mathrm{Cl}(\mathfrak{f})$ such
that
\begin{equation}\label{conditions}
\chi|_{\mathrm{Cl}(K_\mathfrak{f}/L)}=1,~\chi(D)\neq1~ \textrm{and}~
\mathfrak{p}_k|\mathfrak{f}_\chi~(k=1,\cdots, n).
\end{equation}
\end{lemma}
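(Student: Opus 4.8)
The plan is to build $\chi$ as the lift to $\mathrm{Cl}(\mathfrak{f})$ of a character of $\pi_\mathfrak{f}(\mathcal{O}_K)^*/\pi_\mathfrak{f}(\mathcal{O}_K^*)$, using the short exact sequence (\ref{exact}) together with the extension device of Lemma \ref{characterextend}, and then to verify the three conditions in (\ref{conditions}) one at a time. Write $G=\mathrm{Cl}(\mathfrak{f})$, $H=\mathrm{Cl}(K_\mathfrak{f}/L)$, and $P=\pi_\mathfrak{f}(\mathcal{O}_K)^*/\pi_\mathfrak{f}(\mathcal{O}_K^*)$, so that $\Phi_\mathfrak{f}$ embeds $P$ into $G$ with $G/P\cong\mathrm{Cl}(\mathcal{O}_K)$. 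First I would dispose of the easy case: if the given class $D\notin H$ already has nontrivial image in $G/H$ under some character that can be arranged to be nontrivial on every $\mathfrak{p}_k$, we are done; the point of the hypothesis (\ref{hypothesis}) is precisely to handle the potentially obstructive case where $D$ lies in $HP$ (or needs to be pushed into the $P$-part), since $P$ is where the local factors at the $\mathfrak{p}_k$ live and where we have quantitative control of kernels via $\varepsilon_k$ and $\widehat\varepsilon_k$.

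The core construction is as follows. For each $k$, the map $\rho_k:P\to\pi_{\mathfrak{p}_k^{e_k}}(\mathcal{O}_K)^*/\pi_{\mathfrak{p}_k^{e_k}}(\mathcal{O}_K^*)$ has kernel of size $\varepsilon_k$, and the hypothesis gives an odd prime $\nu_k\nmid\varepsilon_k$; hence the $\nu_k$-part of $P$ injects into the $\nu_k$-part of the local group at $\mathfrak{p}_k$, which guarantees we can find a character of $P$ that is nontrivial on the $\mathfrak{p}_k$-component and has $\nu_k$-power order — this is what will eventually force $\mathfrak{p}_k\mid\mathfrak{f}_\chi$ via Lemma \ref{conductor}(iii) once we check $\widetilde\chi_k\neq1$. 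Simultaneously, the condition $\mathrm{ord}_{\nu_k}(\widehat\varepsilon_k)>\mathrm{ord}_{\nu_k}(\#H)$ says that $\widehat\rho_k$ (projection away from $\mathfrak{p}_k$) kills a subgroup of $P$ whose $\nu_k$-part is strictly larger than anything $H$ can see; concretely, $\#\,\mathrm{Ker}(\widehat\rho_k)/\#(\mathrm{Ker}(\widehat\rho_k)\cap H)$ is divisible by $\nu_k$, so inside $\mathrm{Ker}(\widehat\rho_k)$ there is an element $x_k$ whose image in $P/(H\cap P)$ has order divisible by $\nu_k$. I would then prescribe a character $\chi_0$ on the subgroup generated by $H$, by the $x_k$, and by (if needed) a preimage of $D$: set $\chi_0|_H=1$, choose $\chi_0(x_k)$ to be a primitive $\nu_k$-power root of unity (possible by Lemma \ref{characterextend} applied successively, since the relevant coset orders are divisible by $\nu_k$ and $\nu_k$ is odd hence avoids the roots of unity issues), and arrange $\chi_0(D)\neq1$ — the last being automatic if $D$ already lies outside the subgroup generated so far, and otherwise obtainable because $D\notin H$ forces $D$ to have nontrivial image modulo the span of $H$ in at least one coordinate we control. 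Finally extend $\chi_0$ to all of $G$ by Lemma \ref{characterextend} (or the usual divisibility of character groups), call the result $\chi$.

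The verification then runs: (a) $\chi|_H=1$ by construction; (b) $\chi(D)\neq1$ by construction; (c) for each $k$, I must show $\mathfrak{p}_k\mid\mathfrak{f}_\chi$. For (c) I use Lemma \ref{conductor}(iii): it suffices that $\widetilde\chi_k\neq1$, i.e.\ that $\widetilde\chi=\chi\circ\widetilde\Phi_\mathfrak{f}$ restricted to the $\mathfrak{p}_k$-factor is nontrivial. By construction $\chi(x_k)$ is a nontrivial $\nu_k$-power root of unity, and $x_k\in\mathrm{Ker}(\widehat\rho_k)$ means $x_k$ is trivial in every local factor \emph{except} possibly the $\mathfrak{p}_k$-one; since $\chi(x_k)\neq1$, the $\mathfrak{p}_k$-component of $\widetilde\chi$ must be nontrivial, giving $\widetilde\chi_k\neq1$. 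The main obstacle — and the reason the hypothesis is phrased with two inequalities — is the bookkeeping in the extension step: one must extend $\chi_0$ from $H$ to include each $x_k$ \emph{and} a representative of $D$ \emph{and} then to all of $G$, all while keeping $\chi_0|_H=1$ and not accidentally trivializing $\chi_0$ on any $x_k$ or on $D$. The condition $\nu_k\nmid\varepsilon_k$ is what lets the local nontriviality at $\mathfrak{p}_k$ survive (it prevents $x_k$ from being washed out), and $\mathrm{ord}_{\nu_k}(\widehat\varepsilon_k)>\mathrm{ord}_{\nu_k}(\#H)$ is what guarantees $x_k$ can be chosen with $\chi_0(x_k)\neq1$ compatibly with $\chi_0|_H=1$; checking that these two conditions, for the distinct odd primes $\nu_1,\dots,\nu_n$, can be met simultaneously (the $\nu_k$ need not be distinct, but the $\mathfrak{p}_k$-components are independent, so the constructions at different $k$ do not interfere) is the delicate point I would write out carefully.
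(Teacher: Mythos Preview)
Your overall picture is right—the elements $x_k\in\mathrm{Ker}(\widehat\rho_k)$ not lying in $\mathrm{Cl}(K_\mathfrak{f}/L)$ are exactly what forces $\widetilde\chi_k\neq1$ via Lemma~\ref{conductor}(iii)—but the organizational device you propose (prescribe $\chi_0$ on $H$, on all $x_k$, and on $D$ simultaneously, then extend) has a genuine gap. When you apply Lemma~\ref{characterextend} successively to adjoin $x_1,\dots,x_n$ and $D$ to $H$, you only get to choose $\chi_0(x_k)$ among the $m$-th roots of $\chi_0(x_k^m)$, where $m$ is the order of the coset of $x_k$ in the subgroup built so far. Nothing prevents $x_k$ from already lying in $\langle H,x_1,\dots,x_{k-1}\rangle$, in which case $\chi_0(x_k)$ is forced and may well be $1$; likewise, once all $x_k$ are adjoined, $D$ may lie in the span and $\chi_0(D)$ could be forced to $1$. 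Your parenthetical that ``the $\mathfrak{p}_k$-components are independent, so the constructions at different $k$ do not interfere'' is not justified: the groups involved are quotients by the diagonal image of $\mathcal{O}_K^*$, so the local factors are not literally direct summands, and relations among $H,x_1,\dots,x_n,D$ can and do occur.

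The paper avoids this entirely by working \emph{iteratively from the other end}. It first takes any $\chi$ with $\chi|_{\mathrm{Cl}(K_\mathfrak{f}/L)}=1$ and $\chi(D)\neq1$ (no conductor condition yet), and then, for each $k$ with $\widetilde\chi_k=1$, multiplies $\chi$ by a correction $\psi$ built as follows: extend the trivial character of $\mathrm{Cl}(K_\mathfrak{f}/L)$ to some $\psi'$ with $\psi'(C)=\zeta_{\nu_k}$ for a suitable $C$ in the Sylow $\nu_k$-subgroup of $\Phi_\mathfrak{f}(\mathrm{Ker}(\widehat\rho_k))$, and then set $\psi=\psi'^{\varepsilon_k}$ or $\psi'^{2\varepsilon_k}$. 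The exponent $\varepsilon_k=\#\mathrm{Ker}(\rho_k)$ is the key trick you are missing: raising to $\varepsilon_k$ kills $\psi'$ on $\Phi_\mathfrak{f}(\mathrm{Ker}(\rho_k))$, which contains every local factor $\iota\circ\iota_\ell(\pi_{\mathfrak{p}_\ell^{e_\ell}}(\mathcal{O}_K)^*)$ for $\ell\neq k$, so $\widetilde\psi_\ell=1$ and the previously fixed components are undisturbed; meanwhile $\psi(C)=\zeta_{\nu_k}^{\varepsilon_k}$ (or $\zeta_{\nu_k}^{2\varepsilon_k}$) is still $\neq1$ precisely because $\nu_k\nmid\varepsilon_k$ and $\nu_k$ is odd. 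The two-choice device between $\varepsilon_k$ and $2\varepsilon_k$ is what guarantees $(\chi\psi)(D)\neq1$: if both choices gave $1$ then $\psi'^{\varepsilon_k}(D)=1$ and hence $\chi(D)=1$, a contradiction. This is where the oddness of $\nu_k$ is actually used (so that $2\varepsilon_k$ is also coprime to $\nu_k$); your proposal never invokes oddness. In short, the missing idea is the localizing exponent $\varepsilon_k$ together with the $\varepsilon_k$/$2\varepsilon_k$ alternative, which replaces your unverified simultaneous-extension step with a clean one-component-at-a-time correction.
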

\begin{proof}
Since $D\in\mathrm{Cl}(\mathfrak{f})-\mathrm{Cl}(K_\mathfrak{f}/L)$,
there is a character $\chi$ of $\mathrm{Cl}(\mathfrak{f})$ such that
\begin{equation*}
\chi|_{\mathrm{Cl}(K_\mathfrak{f}/L)}=1~\textrm{and}~\chi(D)\neq1
\end{equation*}
by Lemma \ref{characterextend}. Let $\widetilde{\chi}_k$
($k=1,\cdots,n$) be the character of
$\pi_{\mathfrak{p}_k^{e_k}}(\mathcal{O}_K)^*$ induced from $\chi$ as
in Lemma \ref{conductor}.
\par
Suppose $\widetilde{\chi}_k=1$ for some $k$. Let $\nu_k$ be a prime
number in the assumption (\ref{hypothesis}) and $S$ be a Sylow
$\nu_k$-subgroup of
$\Phi_\mathfrak{f}(\mathrm{Ker}(\widehat{\rho}_k))$. Then
$\mathrm{Cl}(K_\mathfrak{f}/L)$ does not contain $S$ by
(\ref{hypothesis}). Hence we can take an element $C$ in
$S-\mathrm{Cl}(K_\mathfrak{f}/L)$ whose order is a power of $\nu_k$.
Now we extend the trivial character of
$\mathrm{Cl}(K_\mathfrak{f}/L)$ to a character $\psi'$ of
$\mathrm{Cl}(\mathfrak{f})$ so that $\psi'(C)=\zeta_{\nu_k}=e^{2\pi
i/\nu_k}$ by Lemma \ref{characterextend}, because the order of the
coset $C\mathrm{Cl}(K_\mathfrak{f}/L)$ in the quotient group
$\mathrm{Cl}(\mathfrak{f})/\mathrm{Cl}(K_\mathfrak{f}/L)$ is also a
power of $\nu_k$. Define a character $\psi$ of
$\mathrm{Cl}(\mathfrak{f})$ by
\begin{equation*}
\psi=\left\{\begin{array}{ll} \psi'^{\varepsilon_k} & \textrm{if
$\chi(D)\psi'^{\varepsilon_k}(D)\neq1$}\vspace{0.1cm}\\
\psi'^{2\varepsilon_k} & \textrm{otherwise.}
\end{array}\right.
\end{equation*}
We then achieve $(\chi\psi)|_{\mathrm{Cl}(K_\mathfrak{f}/L)}=1$ and
$(\chi\psi)(D)=\chi(D)\psi(D)\neq1$. Furthermore, since
$(\iota\circ\iota_\ell(\pi_{\mathfrak{p}_\ell^{e_\ell}}(\mathcal{O}_K)^*))
\pi_\mathfrak{f}(\mathcal{O}_K^*)/
\pi_\mathfrak{f}(\mathcal{O}_K^*)$ is a subgroup of
$\mathrm{Ker}(\rho_k)$ for $\ell\neq k$ (see the diagram
(\ref{diagram2}) below), we derive that
\begin{eqnarray*}
\widetilde{\psi}_\ell(\pi_{\mathfrak{p}_\ell^{e_\ell}}(\mathcal{O}_K)^*)&=&
\psi\circ\widetilde{\Phi}_\mathfrak{f}\circ\iota\circ\iota_\ell(\pi_{\mathfrak{p}_\ell^{e_\ell}}(\mathcal{O}_K)^*)
\quad\textrm{by the definition of $\widetilde{\psi}_\ell$ in Lemma \ref{conductor}}\\
&\subseteq&\psi(\Phi_\mathfrak{f}(\mathrm{Ker}(\rho_k)))\\
&=&\psi'^{\varepsilon_k}(\Phi_\mathfrak{f}(\mathrm{Ker}(\rho_k)))
~\textrm{or}~\psi'^{2\varepsilon_k}(\Phi_\mathfrak{f}(\mathrm{Ker}(\rho_k)))=1,
\end{eqnarray*}
which yields $\widetilde{\psi}_\ell=1$ and
$(\widetilde{\chi\psi})_\ell=\widetilde{\chi}_\ell\widetilde{\psi}_\ell=\widetilde{\chi}_\ell$
for $\ell\neq k$. On the other hand, since
$C\in\Phi_\mathfrak{f}(\mathrm{Ker}(\widehat{\rho}_k))\subseteq\mathrm{Im}(\Phi_\mathfrak{f})$,
we can take an element $c$ of $\pi_\mathfrak{f}(\mathcal{O}_K)^*$ so
that $\widetilde{\Phi}_\mathfrak{f}(c)=C$. Thus we get
\begin{eqnarray*}
\widetilde{\psi}(c)=\psi\circ\widetilde{\Phi}_\mathfrak{f}(c)=\psi(C)
=(\psi'^{\varepsilon_k}(C)~\textrm{or}~\psi'^{2\varepsilon_k}(C))=(\zeta_{\nu_k}^{\varepsilon_k}
~\textrm{or}~\zeta_{\nu_k}^{2\varepsilon_k})\neq1,
\end{eqnarray*}
which shows $\widetilde{\psi}\neq1$. Hence $\widetilde{\psi}_k\neq1$
by the fact $\widetilde{\psi}_\ell=1$ for $\ell\neq k$ and Lemma
\ref{conductor}(ii). Therefore we obtain
$(\widetilde{\chi\psi})_k=\widetilde{\chi}_k\widetilde{\psi}_k=\widetilde{\psi}_k\neq1$.
\par
Now, we replace $\chi$ by $\chi\psi$ and repeat the above process
for finitely many $\ell$ ($\neq k$) such that
$\widetilde{\chi}_\ell=1$. After this procedure we finally establish
a character $\chi$ of $\mathrm{Cl}(\mathfrak{f})$ which satisfies
\begin{equation*}
\chi|_{\mathrm{Cl}(K_\mathfrak{f}/L)}=1,~ \chi(D)\neq1~\textrm{and}~
\widetilde{\chi}_k\neq1~(k=1,\cdots, n).
\end{equation*}
We derive by Lemma \ref{conductor} that
$\mathfrak{p}_k|\mathfrak{f}_\chi$ for all $k=1,\cdots,n$. This
proves the lemma.
\end{proof}

\begin{remark}\label{e_kcondition}
From the commutative diagram of exact sequences
\begin{equation*}
\begindc{\commdiag}[50]
\obj(1,2)[A]{$1$}
\obj(3,2)[B]{$\pi_\mathfrak{f}(\mathcal{O}_K)^*/\pi_\mathfrak{f}(\mathcal{O}_K^*)$}
\obj(6,2)[C]{$\mathrm{Cl}(\mathfrak{f})$}
\obj(8,2)[D]{$\mathrm{Cl}(\mathcal{O}_K)$} \obj(9,2)[E]{$1$}
\obj(1,1)[F]{$1$}
\obj(3,1)[G]{$\pi_{\mathfrak{f}\mathfrak{p}_k^{-e_k}}(\mathcal{O}_K)^*/\pi_{\mathfrak{f}\mathfrak{p}_k^{-e_k}}(\mathcal{O}_K^*)$}
\obj(6,1)[H]{$\mathrm{Cl}(\mathfrak{f}\mathfrak{p}_k^{-e_k})$}
\obj(8,1)[I]{$\mathrm{Cl}(\mathcal{O}_K)$} \obj(9,1)[J]{$1$}
\mor{A}{B}{} \mor{B}{C}{$\Phi_\mathfrak{f}$} \mor{C}{D}{}
\mor{D}{E}{} \mor{F}{G}{}
\mor{G}{H}{$\Phi_{\mathfrak{f}\mathfrak{p}_k^{-e_k}}$} \mor{H}{I}{}
\mor{I}{J}{} \mor{B}{G}{$\widehat{\rho}_k$} \mor{C}{H}{}
\mor{D}{I}{}
\enddc
\end{equation*}
where vertical maps are natural projections, one can readily obtain
\begin{equation*}
\mathrm{Cl}(\mathfrak{f})/\Phi_\mathfrak{f}(\mathrm{Ker}(\widehat{\rho}_k))\simeq
\mathrm{Cl}(\mathfrak{f}\mathfrak{p}_k^{-e_k})\simeq
\mathrm{Cl}(\mathfrak{f})/\mathrm{Cl}(K_\mathfrak{f}/K_{\mathfrak{f}\mathfrak{p}_k^{-e_k}}).
\end{equation*}
Hence we have
\begin{equation*}
\widehat{\varepsilon}_k=\#~\mathrm{Ker}(\widehat{\rho}_k)=
\#~\Phi_\mathfrak{f}(\mathrm{Ker}(\widehat{\rho}_k))=
[K_\mathfrak{f}:K_{\mathfrak{f}\mathfrak{p}_k^{-e_k}}]
=\varphi(\mathfrak{p}_k^{e_k})\omega(\mathfrak{f})/\omega(\mathfrak{f}\mathfrak{p}_k^{-e_k})
\end{equation*}
by using Lemma \ref{degree}(ii), which will be used in the next
section. Similarly, again from the commutative diagram
\begin{equation}\label{diagram2}
\begindc{\commdiag}[50]
\obj(1,3)[A]{$1$}
\obj(3,3)[B]{$\pi_\mathfrak{f}(\mathcal{O}_K)^*/\pi_\mathfrak{f}(\mathcal{O}_K^*)$}
\obj(6,3)[C]{$\mathrm{Cl}(\mathfrak{f})$}
\obj(8,3)[D]{$\mathrm{Cl}(\mathcal{O}_K)$} \obj(9,3)[E]{$1$}
\obj(1,1)[F]{$1$}
\obj(3,1)[G]{$\pi_{\mathfrak{p}_k^{e_k}}(\mathcal{O}_K)^*/\pi_{\mathfrak{p}_k^{e_k}}(\mathcal{O}_K^*)$}
\obj(6,1)[H]{$\mathrm{Cl}(\mathfrak{p}_k^{e_k})$}
\obj(8,1)[I]{$\mathrm{Cl}(\mathcal{O}_K)$} \obj(9,1)[J]{$1$}
\obj(3,2)[K]{$\prod_{\ell=1}^n
\pi_{\mathfrak{p}_\ell^{e_\ell}}(\mathcal{O}_K)^*/\{\prod_{\ell=1}^n\pi_{\mathfrak{p}_\ell^{e_\ell}}(x):x\in\mathcal{O}_K^*\}$}
\mor{A}{B}{} \mor{B}{C}{$\Phi_\mathfrak{f}$}
\mor{B}{K}{$\wr$}[\atleft, \solidline] \mor{C}{D}{} \mor{D}{E}{}
\mor{F}{G}{} \mor{G}{H}{$\Phi_{\mathfrak{p}_k^{e_k}}$} \mor{H}{I}{}
\mor{I}{J}{} \mor{K}{G}{$\rho_k$} \mor{C}{H}{} \mor{D}{I}{}
\enddc
\end{equation}
we come up with
\begin{equation*}
\varepsilon_k=\#~\mathrm{Ker}(\rho_k)
=\#~\Phi_\mathfrak{f}(\mathrm{Ker}(\rho_k))
=[K_\mathfrak{f}:K_{\mathfrak{p}_k^{e_k}}]=
\frac{\prod_{\ell=1}^n\varphi(\mathfrak{p}_\ell^{e_\ell})\omega(\mathfrak{f})}
{\varphi(\mathfrak{p}_k^{e_k})\omega(\mathfrak{p}_k^{e_k})}.
\end{equation*}
\end{remark}

\begin{theorem}\label{generator}
Let $L$ be a field in Lemma \textup{\ref{character}} which satisfies
the assumption \textup{(\ref{hypothesis})}. Then the singular value
\begin{equation*}
\varepsilon=\mathbf{N}_{K_\mathfrak{f}/L}(g_\mathfrak{f}(C_0))
\end{equation*}
generates $L$ over $K$.
\end{theorem}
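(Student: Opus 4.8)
The plan is to show that the field $L'=K(\varepsilon)$, which a priori sits between $K$ and $L$, coincides with $L$. Since $L'\subseteq L\subseteq K_\mathfrak{f}$, by the Galois correspondence it suffices to prove that $\mathrm{Gal}(K_\mathfrak{f}/L')\subseteq\mathrm{Gal}(K_\mathfrak{f}/L)$, or equivalently, under the Artin isomorphism, that $\mathrm{Cl}(K_\mathfrak{f}/L')\subseteq\mathrm{Cl}(K_\mathfrak{f}/L)$. So suppose, for contradiction, that there is a class $D\in\mathrm{Cl}(K_\mathfrak{f}/L')-\mathrm{Cl}(K_\mathfrak{f}/L)$; I will derive a contradiction with the fact that $D$ fixes $\varepsilon$.

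First I would invoke Lemma \ref{character}: since $D\in\mathrm{Cl}(\mathfrak{f})-\mathrm{Cl}(K_\mathfrak{f}/L)$ and the assumption \textup{(\ref{hypothesis})} holds, there is a character $\chi$ of $\mathrm{Cl}(\mathfrak{f})$ with $\chi|_{\mathrm{Cl}(K_\mathfrak{f}/L)}=1$, $\chi(D)\neq1$, and $\mathfrak{p}_k\mid\mathfrak{f}_\chi$ for all $k$. The last condition forces $\mathfrak{f}_\chi\neq\mathcal{O}_K$ (indeed $\mathfrak{f}$ is nontrivial, so there is at least one $\mathfrak{p}_k$), so Proposition \ref{LandS} applies and, together with Remark \ref{remarkLandS}, gives $S_\mathfrak{f}(\overline{\chi},g_\mathfrak{f})\neq0$: the left-hand side is a nonzero multiple of $L_{\mathfrak{f}_\chi}(1,\chi_0)$ — nonzero by Remark \ref{remarkLandS}(ii) — and crucially the Euler factor $\prod_{\mathfrak{p}\mid\mathfrak{f},\,\mathfrak{p}\nmid\mathfrak{f}_\chi}(1-\overline{\chi}_0(\mathfrak{p}))$ is empty because every $\mathfrak{p}_k$ divides $\mathfrak{f}_\chi$, hence equals $1$ and cannot vanish. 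Thus $\sum_{C\in\mathrm{Cl}(\mathfrak{f})}\overline{\chi}(C)\log|g_\mathfrak{f}(C)|\neq0$.

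Next I would compute $\log|\varepsilon|$ and feed it through the orthogonality of characters. Using the transformation formula \textup{(\ref{Artin})}, $g_\mathfrak{f}(C_0)^{\sigma(C)}=g_\mathfrak{f}(C)$, so
\begin{equation*}
\log|\varepsilon|=\log\bigl|\mathbf{N}_{K_\mathfrak{f}/L}(g_\mathfrak{f}(C_0))\bigr|
=\sum_{C\in\mathrm{Cl}(K_\mathfrak{f}/L)}\log|g_\mathfrak{f}(C_0)^{\sigma(C)}|
=\sum_{C\in\mathrm{Cl}(K_\mathfrak{f}/L)}\log|g_\mathfrak{f}(C)|.
\end{equation*}
Since $D$ fixes $\varepsilon$ (as $D\in\mathrm{Cl}(K_\mathfrak{f}/L')=\mathrm{Gal}(K_\mathfrak{f}/K(\varepsilon))$ under Artin), applying $\sigma(D)$ gives $\log|\varepsilon|=\log|\varepsilon^{\sigma(D)}|=\sum_{C\in\mathrm{Cl}(K_\mathfrak{f}/L)}\log|g_\mathfrak{f}(CD)|$. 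More generally, for every class $E$ in the subgroup $\langle\mathrm{Cl}(K_\mathfrak{f}/L),D\rangle$ one gets $\log|\varepsilon^{\sigma(E)}|=\log|\varepsilon|$, i.e. the function $C\mapsto\log|g_\mathfrak{f}(C)|$ has the same sum over every coset of $\mathrm{Cl}(K_\mathfrak{f}/L)$ contained in $\langle\mathrm{Cl}(K_\mathfrak{f}/L),D\rangle$. I would then expand $S_\mathfrak{f}(\overline{\chi},g_\mathfrak{f})$ by grouping $C\in\mathrm{Cl}(\mathfrak{f})$ into cosets of $\mathrm{Cl}(K_\mathfrak{f}/L)$: because $\overline{\chi}$ is trivial on $\mathrm{Cl}(K_\mathfrak{f}/L)$, it is constant on each such coset, whence $S_\mathfrak{f}(\overline{\chi},g_\mathfrak{f})=\sum_{\overline{C}}\overline{\chi}(\overline{C})\sum_{C\in\overline{C}}\log|g_\mathfrak{f}(C)|$, a sum over cosets $\overline{C}\in\mathrm{Cl}(\mathfrak{f})/\mathrm{Cl}(K_\mathfrak{f}/L)$. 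The inner sum is $\log|\varepsilon^{\sigma(C)}|$, which is a class function on $\mathrm{Cl}(\mathfrak{f})/\mathrm{Cl}(K_\mathfrak{f}/L')$; since $\overline{\chi}$ factors through $\mathrm{Cl}(\mathfrak{f})/\mathrm{Cl}(K_\mathfrak{f}/L)$ but is nontrivial on the quotient $\mathrm{Cl}(K_\mathfrak{f}/L')/\mathrm{Cl}(K_\mathfrak{f}/L)$ (because $\chi(D)\neq1$ while $D\in\mathrm{Cl}(K_\mathfrak{f}/L')$), summing over that quotient first and using orthogonality of characters kills the whole expression, giving $S_\mathfrak{f}(\overline{\chi},g_\mathfrak{f})=0$ — contradicting the previous paragraph.

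The main obstacle is the bookkeeping in that last step: one must organize the double sum so that the innermost summation runs over the subgroup $\mathrm{Cl}(K_\mathfrak{f}/L)/\mathrm{Cl}(K_\mathfrak{f}/L')$ — wait, rather the relevant point is that $\overline\chi$ restricted to $\mathrm{Cl}(K_\mathfrak{f}/L')$ is a nontrivial character (value $\overline{\chi}(D)\neq1$) that is being summed against a function constant on $\mathrm{Cl}(K_\mathfrak{f}/L')$-cosets, so one should refactor $S_\mathfrak{f}(\overline\chi,g_\mathfrak{f})=\sum_{\overline{C}\in\mathrm{Cl}(\mathfrak{f})/\mathrm{Cl}(K_\mathfrak{f}/L')}\bigl(\sum_{C\in\overline C}\log|g_\mathfrak{f}(C)|\bigr)\bigl(\sum_{\overline\chi'}\cdots\bigr)$ and exploit $\sum_{E\in\mathrm{Cl}(K_\mathfrak{f}/L')/\mathrm{Cl}(K_\mathfrak{f}/L)}\overline\chi(E)=0$. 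Getting the index gymnastics exactly right — and making sure $\log|g_\mathfrak{f}(C)|$ genuinely descends to the right quotient, which is precisely the content of $\varepsilon$ being $D$-fixed — is the delicate part; everything else is Lemma \ref{character} plus the nonvanishing of the $L$-value.
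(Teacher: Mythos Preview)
Your proposal is correct and follows essentially the same approach as the paper: set $F=K(\varepsilon)$, assume $F\subsetneq L$, pick $D\in\mathrm{Cl}(K_\mathfrak{f}/F)-\mathrm{Cl}(K_\mathfrak{f}/L)$, invoke Lemma~\ref{character} to get $\chi$, and then derive the contradiction $0\neq S_\mathfrak{f}(\overline{\chi},g_\mathfrak{f})=0$ by combining the nonvanishing $L$-value (Euler factor $=1$) with a coset decomposition of the Stickelberger sum and orthogonality. The paper organizes the ``delicate'' bookkeeping you flag at the end as a clean triple sum over $C_1\in\mathrm{Cl}(\mathfrak{f})/\mathrm{Cl}(K_\mathfrak{f}/F)$, $C_2\in\mathrm{Cl}(K_\mathfrak{f}/F)/\mathrm{Cl}(K_\mathfrak{f}/L)$, $C_3\in\mathrm{Cl}(K_\mathfrak{f}/L)$, which makes the vanishing factor $\sum_{C_2}\overline{\chi}(C_2)=0$ appear immediately.
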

\begin{proof}
Let $F=K(\varepsilon)$ as a subfield of $L$. Suppose that $F$ is
properly contained in $L$. Then for a class $D$ in
$\mathrm{Cl}(K_\mathfrak{f}/F)-\mathrm{Cl}(K_\mathfrak{f}/L)$ we can
find a character $\chi$ of $\mathrm{Cl}(\mathfrak{f})$ satisfying
the conditions (\ref{conditions}) in Lemma \ref{character}. Since
the Euler factor of $\chi$ is $1$ by the condition
$\mathfrak{p}_k|\mathfrak{f}_\chi$ for all $k$, the value
$S_\mathfrak{f}(\overline{\chi},g_\mathfrak{f})$ does not vanish by
Proposition \ref{LandS} and Remark \ref{remarkLandS}(ii). On the
other hand,
\begin{eqnarray*}
S_\mathfrak{f}(\overline{\chi},g_\mathfrak{f})
&=&\sum_{\begin{smallmatrix}C_1\in\mathrm{Cl}(\mathfrak{f})\\
C_1~\mathrm{mod}~\mathrm{Cl}(K_\mathfrak{f}/F)\end{smallmatrix}}
\sum_{\begin{smallmatrix}C_2\in\mathrm{Cl}(K_\mathfrak{f}/F)\\
C_2~\mathrm{mod}~\mathrm{Cl}(K_\mathfrak{f}/L)\end{smallmatrix}}
\sum_{C_3\in\mathrm{Cl}(K_\mathfrak{f}/L)}\overline{\chi}(C_1C_2C_3)\log|g_\mathfrak{f}(C_1C_2C_3)|\\
&=&\sum_{C_1}\overline{\chi}(C_1) \sum_{C_2} \overline{\chi}(C_2)
\sum_{C_3}\log|g_\mathfrak{f}(C_0)^{\sigma(C_1)\sigma(C_2)\sigma(C_3)}|\quad\textrm{by
$\chi|_{\mathrm{Cl}(K_\mathfrak{f}/L)}=1$ and
(\ref{Artin})}\\
&=&\sum_{C_1}\overline{\chi}(C_1) \sum_{C_2} \overline{\chi}(C_2)
\log|\varepsilon^{\sigma(C_1)\sigma(C_2)}|\\
&=&\sum_{C_1}\overline{\chi}(C_1) (\sum_{C_2} \overline{\chi}(C_2))
\log|\varepsilon^{\sigma(C_1)}|\quad\textrm{by the fact
$\varepsilon\in
F$}\\
&=&0\quad\textrm{because $\chi(D)\neq1$ implies
$\chi|_{\mathrm{Cl}(K_\mathfrak{f}/F)}\neq1$,}
\end{eqnarray*}
which gives a contradiction. Therefore $L=F$, as desired.
\end{proof}

\begin{remark}\label{remarkgenerator}
Observe that any nonzero power of $\varepsilon$ generates $L$ over
$K$, too.
\end{remark}

\section{Actions of Galois groups}

In this section we shall determine Galois groups of ray class fields
over ring class fields by Shimura's reciprocity law.
\par
For an integer $N$ ($\geq1$) let $\zeta_N=e^{2\pi i/N}$ and
$\Gamma(N)=\{\gamma\in\mathrm{SL}_2(\mathbb{Z}):\gamma\equiv\left(\begin{smallmatrix}
1&0\\0&1\end{smallmatrix}\right)\pmod{N}\}$. Furthermore, we let
$\mathcal{F}_N$ be the field of modular functions for $\Gamma(N)$
whose Fourier coefficients lie in $\mathbb{Q}(\zeta_N)$.

\begin{proposition}\label{Gal(F_N/F_1)}
$\mathcal{F}_N$ is a Galois extension of
$\mathcal{F}_1=\mathbb{Q}(j(\tau))$ whose Galois group is isomorphic
to
\begin{equation*}
\mathrm{GL}_2(\mathbb{Z}/N\mathbb{Z})/\{\pm1_2\}=G_N\cdot
\mathrm{SL}_2(\mathbb{Z}/N\mathbb{Z})/\{\pm1_2\}=
\mathrm{SL}_2(\mathbb{Z}/N\mathbb{Z})/\{\pm1_2\}\cdot G_N,
\end{equation*}
where
$G_N=\{\left(\begin{smallmatrix}1&0\\0&d\end{smallmatrix}\right)
:d\in(\mathbb{Z}/N\mathbb{Z})^*\}$. Here, the matrix
$\left(\begin{smallmatrix}1&0\\0&d\end{smallmatrix}\right)\in G_N$
acts on $\sum_{n>-\infty}c_n q^{n/N}\in\mathcal{F}_N$ by
\begin{equation*}
\sum_{n>-\infty} c_nq^{n/N}\mapsto \sum_{n>-\infty}
c_n^{\sigma_d}q^{n/N},
\end{equation*}
where $\sigma_d$ is the automorphism of $\mathbb{Q}(\zeta_N)$
induced by $\zeta_N\mapsto\zeta_N^d$. And, for an element
$\gamma\in\mathrm{SL}_2(\mathbb{Z}/N\mathbb{Z})/\{\pm1_2\}$ let
$\gamma'\in\mathrm{SL}_2(\mathbb{Z})$ be a preimage of $\gamma$ via
the natural surjection
$\mathrm{SL}_2(\mathbb{Z})\rightarrow\mathrm{SL}_2(\mathbb{Z}/N\mathbb{Z})/\{\pm1_2\}$.
Then $\gamma$ acts on $h\in\mathcal{F}_N$ by composition
\begin{equation*}
h\mapsto h\circ\gamma'
\end{equation*}
as a fractional linear transformation.
\end{proposition}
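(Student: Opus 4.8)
The plan is to realize $\mathcal{F}_N$ through explicit generators, define the claimed $\mathrm{GL}_2(\mathbb{Z}/N\mathbb{Z})/\{\pm1_2\}$-action on it directly, and then conclude by comparing field degrees. First I would recall that the field $\mathbb{C}\mathcal{F}_N$ of \emph{all} meromorphic modular functions for $\Gamma(N)$ is generated over $\mathbb{C}(j)$ by the Fricke functions $f_{(a_1/N,\,a_2/N)}$ --- suitable normalizations of $\wp(a_1\tau/N+a_2/N;[\tau,1])$ --- and that these have $q$-expansions with coefficients in $\mathbb{Q}(\zeta_N)$. This gives $\mathcal{F}_N=\mathbb{Q}(\zeta_N)\bigl(j,\,f_{(a_1/N,\,a_2/N)}\bigr)$, and inspecting constant terms at the cusp $\infty$ shows that the field of constants of $\mathcal{F}_N$ equals $\mathbb{Q}(\zeta_N)$. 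A point that must be pinned down here is that $\mathcal{F}_N$ is linearly disjoint from $\mathbb{C}$ over $\mathbb{Q}(\zeta_N)$, so that $\mathbb{C}\otimes_{\mathbb{Q}(\zeta_N)}\mathcal{F}_N\cong\mathbb{C}\mathcal{F}_N$; this is what later lets me transfer degree computations between $\mathcal{F}_N/\mathbb{Q}(\zeta_N)(j)$ and $\mathbb{C}\mathcal{F}_N/\mathbb{C}(j)$.

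Next I would check that the two prescriptions in the statement define automorphisms of $\mathcal{F}_N$ fixing $\mathcal{F}_1=\mathbb{Q}(j)$. For $\gamma\in\mathrm{SL}_2(\mathbb{Z}/N\mathbb{Z})/\{\pm1_2\}$ the assignment $h\mapsto h\circ\gamma'$ is well defined because $\Gamma(N)$ and $-1_2$ act trivially on $\mathcal{F}_N$, and it preserves the property of having $q$-expansion coefficients in $\mathbb{Q}(\zeta_N)$: writing $\mathrm{SL}_2(\mathbb{Z})=\langle T,S\rangle$ with $T=\left(\begin{smallmatrix}1&1\\0&1\end{smallmatrix}\right)$ and $S=\left(\begin{smallmatrix}0&-1\\1&0\end{smallmatrix}\right)$, composition with $T$ merely replaces $q^{1/N}$ by $\zeta_N q^{1/N}$, while the effect of composition with $S$ on the Fricke functions is governed by their explicit transformation law and stays within $\mathbb{Q}(\zeta_N)$. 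For $\left(\begin{smallmatrix}1&0\\0&d\end{smallmatrix}\right)\in G_N$ the prescribed action $\sigma_d$ on Fourier coefficients is realized concretely on generators by $f_{(a_1/N,\,a_2/N)}\mapsto f_{(a_1/N,\,da_2/N)}$, which merely permutes them, hence it is a well-defined automorphism, clearly trivial on $\mathbb{Q}(j)$. One then records the factorization $\mathrm{GL}_2=\mathrm{SL}_2\cdot G_N=G_N\cdot\mathrm{SL}_2$ (the determinant restricts to an isomorphism $G_N\stackrel{\sim}{\longrightarrow}(\mathbb{Z}/N\mathbb{Z})^*$ with kernel $\mathrm{SL}_2$), checks that the two actions are compatible --- so that the combined assignment does not depend on the chosen factorization --- and obtains a homomorphism $\mathrm{GL}_2(\mathbb{Z}/N\mathbb{Z})/\{\pm1_2\}\to\mathrm{Aut}(\mathcal{F}_N/\mathcal{F}_1)$.

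For injectivity, suppose $\left(\begin{smallmatrix}1&0\\0&d\end{smallmatrix}\right)\cdot s$ with $s\in\mathrm{SL}_2$ acts trivially on $\mathcal{F}_N$; applying it to the constant $\zeta_N\in\mathcal{F}_N$ yields $\zeta_N^d=\zeta_N$, hence $d\equiv1\pmod N$, and then $h\circ s'=h$ for every $h\in\mathcal{F}_N$ forces $s'\in\pm\Gamma(N)$ because the Fricke functions separate the cosets of $\pm\Gamma(N)$ in $\mathrm{SL}_2(\mathbb{Z})$. Thus the homomorphism is injective. On the other hand,
\[
[\mathcal{F}_N:\mathcal{F}_1]=[\mathcal{F}_N:\mathbb{Q}(\zeta_N)(j)]\cdot[\mathbb{Q}(\zeta_N)(j):\mathbb{Q}(j)]
=\bigl|\mathrm{SL}_2(\mathbb{Z}/N\mathbb{Z})/\{\pm1_2\}\bigr|\cdot\varphi(N)
=\bigl|\mathrm{GL}_2(\mathbb{Z}/N\mathbb{Z})/\{\pm1_2\}\bigr|,
\]
where the first factor equals the degree of the covering $X(N)\to X(1)$, i.e.\ the index $[\mathrm{SL}_2(\mathbb{Z}):\pm\Gamma(N)]$, transferred from $\mathbb{C}\mathcal{F}_N/\mathbb{C}(j)$ by the linear disjointness of Step~1, and the second equals $[\mathbb{Q}(\zeta_N):\mathbb{Q}]$ since $j$ is transcendental over $\mathbb{Q}$. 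Since a group of order $\bigl|\mathrm{GL}_2(\mathbb{Z}/N\mathbb{Z})/\{\pm1_2\}\bigr|$ embeds into $\mathrm{Aut}(\mathcal{F}_N/\mathcal{F}_1)$, whose order is at most $[\mathcal{F}_N:\mathcal{F}_1]$ with equality exactly when the extension is Galois, both the normality of $\mathcal{F}_N/\mathcal{F}_1$ and the asserted description of its Galois group follow at once.

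The main obstacle is the analytic content hidden in Steps~1 and~2 --- that composition with $S$ keeps $q$-expansion coefficients inside $\mathbb{Q}(\zeta_N)$, that the Fricke functions indeed generate $\mathbb{C}\mathcal{F}_N$ and separate the cosets of $\pm\Gamma(N)$, and that no constants beyond $\mathbb{Q}(\zeta_N)$ appear. Each of these genuinely needs the explicit transformation formulas for the Fricke (equivalently, Siegel) functions together with a careful $q$-expansion argument rather than anything formal; once that groundwork is laid, everything else is bookkeeping with group indices and a dimension count.
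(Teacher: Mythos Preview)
Your sketch is correct and is essentially the classical argument found in Lang's \emph{Elliptic Functions}, Chapter~6, Theorem~3. The paper itself does not prove this proposition at all: its entire proof is the single line ``See \cite[Chapter 6 Theorem 3]{Lang},'' so there is no in-paper argument to compare against. What you have written is a faithful outline of Lang's proof---generate $\mathcal{F}_N$ by Fricke functions over $\mathbb{Q}(\zeta_N,j)$, define the $\mathrm{SL}_2$- and $G_N$-actions on those generators, verify compatibility and injectivity, and finish with the degree count $[\mathcal{F}_N:\mathcal{F}_1]=[\mathrm{SL}_2(\mathbb{Z}):\pm\Gamma(N)]\cdot\varphi(N)$. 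Your closing paragraph correctly identifies where the genuine work lies (the transformation formulas and the identification of the constant field), and those are exactly the points Lang spends pages on.
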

\begin{proof}
See \cite[Chapter 6 Theorem 3]{Lang}.
\end{proof}

\begin{proposition}\label{functionj}
Let $N$ be a positive integer.
\begin{itemize}
\item[(i)] The fixed field of $\mathcal{F}_N$ by $\Gamma_0(N)$ is
the field $\mathbb{Q}(j(\tau),j(N\tau),\zeta_N)$.
\item[(ii)] $j(N\tau)$ has rational Fourier coefficients.
\item[(iii)] $\Delta(N\tau)/\Delta(\tau)$ belongs to $\mathcal{F}_N$
and has rational Fourier coefficients.
\end{itemize}
\end{proposition}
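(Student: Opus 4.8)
The plan is to deduce everything from the structure of $\mathrm{Gal}(\mathcal{F}_N/\mathcal{F}_1)$ described in Proposition \ref{Gal(F_N/F_1)} together with the classical fact that $j(N\tau)$ is a root of the modular equation $\Phi_N(X,j(\tau))=0$ over $\mathbb{Q}(j(\tau))$, so $j(N\tau)\in\mathcal{F}_N$ is algebraic over $\mathcal{F}_1$; in fact $j(N\tau)=j\circ\left(\begin{smallmatrix}N&0\\0&1\end{smallmatrix}\right)$ can be realized inside $\mathcal{F}_N$ after conjugating the scaling matrix back into $\mathrm{GL}_2(\mathbb{Z}/N\mathbb{Z})$ via the Smith normal form decomposition $\left(\begin{smallmatrix}N&0\\0&1\end{smallmatrix}\right)=\gamma_1\left(\begin{smallmatrix}1&0\\0&N\end{smallmatrix}\right)\gamma_2$ with $\gamma_i\in\mathrm{SL}_2(\mathbb{Z})$. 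For (i), I would identify the stabilizer of $j(N\tau)$ in $\mathrm{Gal}(\mathcal{F}_N/\mathcal{F}_1)$: an element acts on $j(N\tau)$ through its effect on $\left(\begin{smallmatrix}1&0\\0&N\end{smallmatrix}\right)$ modulo left multiplication by $\mathrm{SL}_2$, and a direct coset computation in $\mathrm{GL}_2(\mathbb{Z}/N\mathbb{Z})/\{\pm1_2\}$ shows that the subgroup fixing $j(N\tau)$ is exactly the image of $\Gamma_0(N)$ together with the Galois-like part $G_N$ that reorders Fourier coefficients — but $G_N$ does not fix $j(N\tau)$ unless the coefficients are already rational, so the fixed field of $\Gamma_0(N)$ alone is $\mathbb{Q}(j(\tau),\zeta_N,j(N\tau))$ once we adjoin $\zeta_N=\mathcal{F}_N^{\mathrm{SL}_2(\mathbb{Z}/N\mathbb{Z})}$, giving the claim. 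This part of the argument is essentially \cite[Chapter 6 \S3]{Lang} and I would cite it while recording the coset bookkeeping.

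For (ii), the point is that $j(N\tau)$ is fixed by $G_N$: since $j(N\tau)$ has a $q$-expansion $q^{-N}+744+\cdots$ involving only integer powers $q^{mN/N}=q^m$ and integer coefficients, the action $\sum c_n q^{n/N}\mapsto\sum c_n^{\sigma_d}q^{n/N}$ leaves it unchanged, so $j(N\tau)\in\mathcal{F}_N^{G_N\cdot(\text{anything})}$; concretely one just observes the Fourier coefficients of $j(N\tau)$ are literally the integer coefficients of the $j$-series, hence rational. This is immediate and needs no real work.

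For (iii), I would first check membership in $\mathcal{F}_N$: $\Delta$ is a weight-$12$ modular form, so $\Delta(N\tau)/\Delta(\tau)$ is a weight-$0$ modular function, and its invariance under $\Gamma(N)$ follows because for $\gamma=\left(\begin{smallmatrix}a&b\\c&d\end{smallmatrix}\right)\in\Gamma(N)$ one has $\Delta(\gamma\tau)=(c\tau+d)^{12}\Delta(\tau)$ and likewise $\Delta(N\gamma\tau)=\Delta(\gamma'(N\tau))=(c N\tau+d)^{12}\Delta(N\tau)$ where $\gamma'=\left(\begin{smallmatrix}a&Nb\\c/N&d\end{smallmatrix}\right)\in\mathrm{SL}_2(\mathbb{Z})$ (using $N\mid c$), so the factors of automorphy cancel after a short computation; holomorphy and meromorphy at the cusps is standard since $\Delta$ is nonvanishing on $\mathfrak{H}$ with known order at each cusp. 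The rationality of the Fourier coefficients follows from \eqref{Delta}: $\Delta(N\tau)/\Delta(\tau)=q^{N-1}\prod_{n=1}^\infty(1-q^{nN})^{24}/(1-q^n)^{24}$, an explicit power series in $q$ with integer coefficients, so again the $G_N$-action is trivial and the coefficients lie in $\mathbb{Q}$. The only mildly delicate point — and the one I would write out carefully — is the cancellation of automorphy factors in the invariance check for (iii), making sure the conjugating matrix $\gamma'$ really lies in $\mathrm{SL}_2(\mathbb{Z})$, which is exactly where the congruence $N\mid c$ is used; everything else is bookkeeping with $q$-expansions and the cited determination of $\mathrm{Gal}(\mathcal{F}_N/\mathcal{F}_1)$.
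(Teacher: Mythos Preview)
The paper does not actually prove this proposition; it simply cites Lang's \emph{Elliptic Functions} (Chapter~6 Theorem~7 for (i), Chapter~4 \S1 for (ii), Chapter~11 Theorem~4 for (iii)). Your proposal instead supplies direct arguments, which is fine and in fact more informative than the paper's treatment. Parts (ii) and (iii) are essentially correct as you sketch them: (ii) is immediate from the integrality of the $j$-coefficients, and (iii) follows from the explicit $q$-product and the $\Gamma(N)$-invariance check. One small slip in (iii): with $\gamma'=\left(\begin{smallmatrix}a&Nb\\c/N&d\end{smallmatrix}\right)$ the automorphy factor for $\Delta(\gamma'(N\tau))$ is $((c/N)(N\tau)+d)^{12}=(c\tau+d)^{12}$, not $(cN\tau+d)^{12}$; with the correct factor the cancellation is immediate rather than requiring ``a short computation.''

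Your argument for (i), however, is muddled at a key point. You write that ``$G_N$ does not fix $j(N\tau)$ unless the coefficients are already rational'' --- but the coefficients \emph{are} rational (this is exactly (ii)), so $G_N$ \emph{does} fix $j(N\tau)$, and the stabilizer of $j(N\tau)$ in $\mathrm{GL}_2(\mathbb{Z}/N\mathbb{Z})/\{\pm1_2\}$ is precisely $G_N\cdot\overline{\Gamma_0(N)}$, the full group of upper-triangular matrices mod $N$. The clean way to finish is then: the field $\mathbb{Q}(j(\tau),j(N\tau))$ corresponds to $G_N\cdot\overline{\Gamma_0(N)}$, and adjoining $\zeta_N$ intersects this with the stabilizer $\mathrm{SL}_2(\mathbb{Z}/N\mathbb{Z})/\{\pm1_2\}$ of $\zeta_N$, yielding exactly $\overline{\Gamma_0(N)}$; hence $\mathcal{F}_N^{\Gamma_0(N)}=\mathbb{Q}(j(\tau),j(N\tau),\zeta_N)$. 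Your conclusion is right, but the sentence as written contradicts itself and should be rewritten along these lines. You also need, for the identification of the $\mathrm{SL}_2$-stabilizer of $j(N\tau)$ with $\overline{\Gamma_0(N)}$, the observation that $\left(\begin{smallmatrix}N&0\\0&1\end{smallmatrix}\right)\gamma\left(\begin{smallmatrix}N&0\\0&1\end{smallmatrix}\right)^{-1}\in\mathrm{SL}_2(\mathbb{Z})$ if and only if $N\mid c$; this is the ``coset computation'' you allude to and should be stated explicitly.
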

\begin{proof}
(i) See \cite[Chapter 6 Theorem 7]{Lang}.\\
(ii) See \cite[Chapter 4 $\S$1]{Lang}.\\
(iii) See \cite[Chapter 11 Theorem 4]{Lang}.
\end{proof}

We need some transformation formulas of Siegel functions to apply
the above proposition.

\begin{proposition}\label{TransformSiegel}
Let $(r_1,r_2)\in(1/N)\mathbb{Z}^2-\mathbb{Z}^2$ for $N\geq2$.
\begin{itemize}
\item[(i)] $g_{(r_1,r_2)}(\tau)^{12N}$ satisfies
\begin{equation*}
g_{(r_1,r_2)}(\tau)^{12N}=g_{(-r_1,-r_2)}(\tau)^{12N} =g_{(\langle
r_1\rangle,\langle r_2\rangle)}(\tau)^{12N},
\end{equation*}
where $\langle X\rangle$ is the fractional part of $X\in\mathbb{R}$
such that $0\leq \langle X\rangle<1$.
\item[(ii)] $g_{(r_1,r_2)}(\tau)^{12N}$ belongs to $\mathcal{F}_N$,
and $\alpha\in \mathrm{GL}_2(\mathbb{Z}/N\mathbb{Z})/\{\pm1_2\}$
\textup{(}$\simeq\mathrm{Gal}(\mathcal{F}_N/
\mathcal{F}_1)$\textup{) acts on the function by}
\begin{equation*}
(g_{(r_1,r_2)}(\tau)^{12N})^\alpha= g_{(r_1,r_2)\alpha}(\tau)^{12N}.
\end{equation*}
\item[(iii)] $g_{(r_1,r_2)}(\tau)$ is integral over
$\mathbb{Z}[j(\tau)]$.
\end{itemize}
\end{proposition}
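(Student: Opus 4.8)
\textbf{Proof proposal for Proposition~\ref{TransformSiegel}.}

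The plan is to verify the three claims by tracing everything back to the infinite-product expansion \eqref{FourierSiegel} of the Siegel function together with the standard distribution and transformation theory of the Klein forms, which is where Siegel functions come from. For part~(i), I would start from the well-known relations for the Siegel function itself (before raising to the $12N$-th power): $g_{(-r_1,-r_2)}=-g_{(r_1,r_2)}$ and, for integers $m_1,m_2$, $g_{(r_1+m_1,r_2+m_2)}=\pm\,\varepsilon\,g_{(r_1,r_2)}$ where $\varepsilon$ is an explicit root of unity coming from the factor $e^{\pi i r_2(r_1-1)}$ and the change in $\mathbf{B}_2(r_1)$; concretely one gets a root of unity whose order divides $2N$ because $(r_1,r_2)\in(1/N)\mathbb{Z}^2$. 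Raising to the $12N$-th power kills all these roots of unity (one checks $12N$ is a multiple of the relevant order — this is the reason the exponent $12N$ is chosen), giving the stated equalities with $\langle r_1\rangle,\langle r_2\rangle$. The only care needed is to keep track of the quadratic Bernoulli term $q^{(1/2)\mathbf{B}_2(r_1)}$: since $\mathbf{B}_2(r_1+1)=\mathbf{B}_2(r_1)+2r_1$ one picks up a factor $q^{r_1}$, which is exactly compensated by the shift in the $q_z$-product, so the identity is clean.

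For part~(ii), I would invoke the modularity of Klein forms / Siegel functions: the function $g_{(r_1,r_2)}(\tau)^{12N}$ is, up to the roots of unity handled in~(i), a product of Klein forms of weight $0$ and level $N$, hence lies in $\mathcal{F}_N$ — one must also check the Fourier coefficients lie in $\mathbb{Q}(\zeta_N)$, which is visible directly from \eqref{FourierSiegel} since $q_z=\zeta_N^{b}q^{a}$ when $r_1=a/N,\ r_2=b/N$, so the $q^{1/N}$-expansion has coefficients in $\mathbb{Z}[\zeta_N]$. For the Galois action I would use Proposition~\ref{Gal(F_N/F_1)}: the subgroup $\mathrm{SL}_2$ acts by $\tau\mapsto\gamma'\tau$, and the classical transformation law of the Klein form under $\mathrm{SL}_2(\mathbb{Z})$ says $g_{(r_1,r_2)}(\gamma'\tau)$ equals $g_{(r_1,r_2)\gamma'}(\tau)$ times a root of unity (again killed by the $12N$-th power); the diagonal subgroup $G_N$ acts by $\sigma_d$ on Fourier coefficients, i.e. $\zeta_N^b\mapsto\zeta_N^{bd}$, which on the product \eqref{FourierSiegel} is exactly the effect of replacing $(r_1,r_2)$ by $(r_1,r_2)\left(\begin{smallmatrix}1&0\\0&d\end{smallmatrix}\right)=(r_1,dr_2)$. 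Combining the two, $(g_{(r_1,r_2)}^{12N})^\alpha=g_{(r_1,r_2)\alpha}^{12N}$ for all $\alpha\in\mathrm{GL}_2(\mathbb{Z}/N\mathbb{Z})/\{\pm1_2\}$, using that every such $\alpha$ factors as an $\mathrm{SL}_2$-part times a $G_N$-part (Proposition~\ref{Gal(F_N/F_1)}).

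For part~(iii), I would argue that $g_{(r_1,r_2)}(\tau)^{12N}$, being a level-$N$ modular function that is holomorphic and nonvanishing on $\mathfrak{H}$ with at worst a pole/zero at the cusps controlled by the leading term $-q^{(1/2)\mathbf{B}_2(r_1)}$, is integral over $\mathbb{Z}[j(\tau)]$ — its conjugates over $\mathcal{F}_1=\mathbb{Q}(j)$ are the finitely many functions $g_{(r_1,r_2)\alpha}(\tau)^{12N}$ of the same shape, and the elementary symmetric functions of these, being holomorphic on $\mathfrak{H}$ and meromorphic at $\infty$ with $q$-expansion coefficients that are algebraic integers, lie in $\mathbb{Z}[j(\tau)]$ by the standard $q$-expansion principle. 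From integrality of the $12N$-th power one deduces integrality of $g_{(r_1,r_2)}(\tau)$ itself over $\mathbb{Z}[j(\tau)]$ since it satisfies a monic polynomial obtained from the minimal polynomial of $g^{12N}$. The main obstacle, and the step deserving the most care, is the bookkeeping of roots of unity in parts~(i) and~(ii): one has to be scrupulous that the prefactor $e^{\pi i r_2(r_1-1)}$ together with the half-integral Bernoulli exponent really does produce a root of unity of order dividing $12N$ under all the relevant substitutions, so that the $12N$-th power genuinely is invariant; everything else is routine once the reference transformation laws for Klein forms are in hand.
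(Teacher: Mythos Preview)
The paper does not actually prove this proposition: its entire proof consists of citations to \cite[Proposition~2.4(1),(3)]{K-S} for~(i), \cite[Chapter~2 Proposition~1.3]{K-L} for~(ii), and \cite[\S3]{K-S} for~(iii). Your sketch is correct and is essentially what those references contain, so there is nothing substantive to compare.

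Two small points worth tightening. In~(i), the root of unity in the translation law is precisely $(-1)^{m_1m_2+m_1+m_2}e^{-\pi i(m_1r_2-m_2r_1)}$ (Kubert--Lang's relation~K2 for Klein forms), visibly a $2N$-th root of unity when $(r_1,r_2)\in(1/N)\mathbb{Z}^2$, so the $12N$-th power kills it as you say. In~(iii), your passage from integrality of $g^{12N}$ to integrality of $g$ via the monic polynomial $P(X^{12N})$ is fine; the step that deserves one more line is why the elementary symmetric functions of the conjugates land in $\mathbb{Z}[j(\tau)]$ rather than merely $\mathcal{O}_{\mathbb{Q}(\zeta_N)}[j(\tau)]$: being $\mathrm{Gal}(\mathcal{F}_N/\mathcal{F}_1)$-invariant they lie in $\mathcal{F}_1=\mathbb{Q}(j)$, hence have rational $q$-expansion, and $\mathbb{Z}[\zeta_N]\cap\mathbb{Q}=\mathbb{Z}$.
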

\begin{proof}
(i) See \cite[Proposition 2.4(1), (3)]{K-S}.\\
(ii) See \cite[Chapter 2 Proposition 1.3]{K-L}.\\
(iii) See \cite[$\S$3]{K-S}.
\end{proof}

Let $K$ be an imaginary quadratic field of discriminant $d_K$, and
define
\begin{eqnarray}\label{theta}
\theta=\left\{\begin{array}{ll}\sqrt{d_K}/2&\textrm{for}~d_K\equiv0\pmod{4}\vspace{0.1cm}\\
(-1+\sqrt{d_K})/2&\textrm{for}~
d_K\equiv1\pmod{4},\end{array}\right.
\end{eqnarray}
from which we get $\mathcal{O}_K=\mathbb{Z}[\theta]$. We see from
the main theorem of the theory of complex multiplication that for
every positive integer $N$,
\begin{equation*}
K_{(N)}=K\mathcal{F}_N(\theta)=K(h(\theta)~:~h\in\mathcal{F}_N~\textrm{is
defined and finite at $\theta$})
\end{equation*}
(\cite[Chapter 10 Corollary to Theorem 2]{Lang}).
\par
Let
\begin{equation*}
\mathrm{min}(\theta,\mathbb{Q})=X^2+B_\theta
X+C_\theta=\left\{\begin{array}{ll} X^2-d_K/4&
\textrm{if}~d_K\equiv0\pmod{4}\vspace{0.1cm}\\
X^2+X+(1-d_K)/4 & \textrm{if}~d_K\equiv1\pmod{4}.
\end{array}\right.
\end{equation*}
For every positive integer $N$, we define the matrix group
\begin{equation*}
W_{N,\theta}=\bigg\{\begin{pmatrix}t-B_\theta s & -C_\theta
s\\s&t\end{pmatrix}\in\mathrm{GL}_2(\mathbb{Z}/N\mathbb{Z})~:~t,s\in\mathbb{Z}/N\mathbb{Z}\bigg\}.
\end{equation*}
Due to Stevenhagen we have the following explicit description of
Shimura's reciprocity law (\cite[Theorem 6.31 and Proposition
6.34]{Shimura}), which relates the class field theory to the theory
of modular functions.

\begin{proposition}\label{reciprocity}
For each positive integer $N$, the matrix group $W_{N,\theta}$ gives
rise to the surjection
\begin{equation}\begin{array}{ccl}\label{surj}
W_{N,\theta}&\longrightarrow&\mathrm{Gal}(K_{(N)}/H)\vspace{0.1cm}\\
\alpha&\mapsto&(h(\theta)\mapsto
h^\alpha(\theta)~:~\textrm{$h\in\mathcal{F}_N$ is defined and finite
at $\theta$}),\end{array}
\end{equation}
whose kernel is
\begin{equation*}
\left\{\begin{array}{ll}
\bigg\{\pm\begin{pmatrix}1&0\\0&1\end{pmatrix},
\pm\begin{pmatrix}0&-1\\1&0\end{pmatrix}\bigg\} & \textrm{if $K=\mathbb{Q}(\sqrt{-1})$}\vspace{0.1cm}\\
\bigg\{\pm\begin{pmatrix}1&0\\0&1\end{pmatrix},
\pm\begin{pmatrix}-1&-1\\1&0\end{pmatrix},
\pm\begin{pmatrix}0&-1\\1&1\end{pmatrix}\bigg\} & \textrm{if $K=\mathbb{Q}(\sqrt{-3})$}\vspace{0.1cm}\\
\bigg\{\pm\begin{pmatrix}1&0\\0&1\end{pmatrix}\bigg\} &
\textrm{otherwise.}
\end{array}\right.
\end{equation*}
\end{proposition}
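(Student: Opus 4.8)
The plan is to reduce the statement to the classical idelic form of Shimura's reciprocity law and then compute the relevant local contribution explicitly using the basis $\{\theta,1\}$ of $\mathcal{O}_K$. First I would recall the standard setup: to an idele $x\in K_\mathbb{A}^*$ one associates an automorphism of the maximal abelian extension of $K$ via the Artin map, and Shimura's reciprocity law (\cite[Theorem 6.31]{Shimura}) says that on the modular function field this automorphism is realized by the action of a matrix $g_\theta(x)\in\mathrm{GL}_2(\widehat{\mathbb{Z}})$ (acting through $\mathrm{GL}_2(\mathbb{Z}/N\mathbb{Z})$ at level $N$) obtained from the regular representation of $K_\mathbb{A}^*$ on $K_\mathbb{A}=K\otimes_\mathbb{Q}\mathbb{A}$ with respect to the $\mathbb{Z}$-basis $\{\theta,1\}$. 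Concretely, multiplication by $s\theta+t\in(\mathcal{O}_K/N\mathcal{O}_K)^*$ sends $\theta\mapsto s\theta^2+t\theta=(t-B_\theta s)\theta-C_\theta s$ and $1\mapsto s\theta+t$, since $\theta^2=-B_\theta\theta-C_\theta$ by the minimal polynomial; writing this in the basis $\{\theta,1\}$ gives precisely the matrix $\left(\begin{smallmatrix}t-B_\theta s & -C_\theta s\\ s & t\end{smallmatrix}\right)$. Thus $W_{N,\theta}$ is exactly the image of $(\mathcal{O}_K/N\mathcal{O}_K)^*$ under this regular representation.

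Next I would identify the subgroup of $\mathrm{Gal}(K_{(N)}/K)$ that fixes $H$: since $K_{(N)}=K\mathcal{F}_N(\theta)$ and $H=K(j(\theta))$, the relevant piece of the idele class group is the one coming from the finite ideles at level $N$, i.e.\ from $(\mathcal{O}_K/N\mathcal{O}_K)^*$ modulo global units. So the map $W_{N,\theta}\to\mathrm{Gal}(K_{(N)}/H)$ is surjective simply because $\mathrm{Gal}(K_{(N)}/H)$ is a quotient of $(\mathcal{O}_K\otimes\widehat{\mathbb{Z}})^*$ by the closure of $\mathcal{O}_K^*$ together with higher-level congruence subgroups, and every element of $(\mathcal{O}_K/N\mathcal{O}_K)^*$ is realized by some $s\theta+t$. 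The action formula in \eqref{surj} is then just the specialization $h\mapsto h^\alpha$ evaluated at $\theta$, which is the content of \cite[Proposition 6.34]{Shimura} / Stevenhagen's reformulation.

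The heart of the proof — and the step I expect to be the main obstacle — is the computation of the kernel. The kernel consists of those $\alpha=\left(\begin{smallmatrix}t-B_\theta s & -C_\theta s\\ s & t\end{smallmatrix}\right)$ such that the corresponding idele acts trivially on $K_{(N)}$; by the reciprocity law this happens precisely when $s\theta+t$ lies in $\mathbb{Q}^*\cdot\mathcal{O}_K^*$ modulo the level-$N$ congruence condition, equivalently when $s\theta+t$ is a root of unity in $\mathcal{O}_K$ times a rational scalar, read modulo $N$. For a generic imaginary quadratic field $\mathcal{O}_K^*=\{\pm1\}$, forcing $\alpha=\pm1_2$. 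For $K=\mathbb{Q}(\sqrt{-1})$ one has the extra unit $i$, whose regular representation in the basis $\{\theta,1\}=\{i,1\}$ (here $d_K=-4$, $B_\theta=0$, $C_\theta=1$) is $\left(\begin{smallmatrix}0&-1\\1&0\end{smallmatrix}\right)$, giving the four listed matrices; for $K=\mathbb{Q}(\sqrt{-3})$ the primitive sixth root of unity $\zeta_6=1+\theta$ (here $d_K=-3$, $B_\theta=1$, $C_\theta=1$, so $\theta=\zeta_3$) has regular representation $\left(\begin{smallmatrix}-1&-1\\1&0\end{smallmatrix}\right)$ (and $\zeta_6^2=\left(\begin{smallmatrix}0&-1\\1&1\end{smallmatrix}\right)$), yielding the six listed matrices. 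The delicate point is to verify that these units, and no others, act trivially — i.e.\ that the scalars $\mathbb{Q}^*$ appearing in $\mathbb{Q}^*\cdot\mathcal{O}_K^*$ contribute nothing new to $W_{N,\theta}$ modulo $\{\pm1_2\}$ — which follows because a rational scalar matrix $\lambda 1_2$ lies in $W_{N,\theta}$ only for $\lambda\equiv\pm1$, and because the $\zeta_N$-twisting on Fourier coefficients built into the $\mathrm{GL}_2$-action pins down the determinant $\mathbf{N}_{K/\mathbb{Q}}(s\theta+t)=s^2C_\theta-stB_\theta\cdots$ consistently with the cyclotomic character. I would carry this out case by case following \cite[Theorem 6.31]{Shimura}, checking that the determinant and the congruence class together leave exactly the unit group $\mathcal{O}_K^*$ acting trivially modulo $\{\pm1_2\}$.
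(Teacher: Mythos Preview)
Your approach is correct and is essentially Stevenhagen's argument, which is precisely what the paper invokes: the paper gives no independent proof of this proposition but simply cites \cite[\S3]{Stevenhagen}.

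Two small slips worth flagging. First, in the $\mathbb{Q}(\sqrt{-3})$ case your labels for $\zeta_6$ and $\zeta_6^2$ are interchanged: with $B_\theta=C_\theta=1$ the element $\theta=\zeta_3=\zeta_6^2$ (i.e.\ $s=1,t=0$) gives $\left(\begin{smallmatrix}-1&-1\\1&0\end{smallmatrix}\right)$, while $\zeta_6=1+\theta$ (i.e.\ $s=1,t=1$) gives $\left(\begin{smallmatrix}0&-1\\1&1\end{smallmatrix}\right)$. Second, your final paragraph about $\mathbb{Q}^*$ is muddled: scalar matrices $t\cdot 1_2$ for every $t\in(\mathbb{Z}/N\mathbb{Z})^*$ \emph{do} lie in $W_{N,\theta}$ (take $s=0$), contrary to what you write, but they are \emph{not} in the kernel --- indeed they cut out $\mathrm{Gal}(K_{(N)}/H_\mathcal{O})$ (Proposition~\ref{Gal(K_N/H_O)}). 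There is no $\mathbb{Q}^*$ issue at all here: from the exact sequence~\eqref{exact} one has $\mathrm{Gal}(K_{(N)}/H)\cong(\mathcal{O}_K/N\mathcal{O}_K)^*/\pi_{N\mathcal{O}_K}(\mathcal{O}_K^*)$, so the kernel of the map \eqref{surj} is exactly the image of $\mathcal{O}_K^*$ under the regular representation, which is the list of matrices you (correctly) computed.
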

\begin{proof}
See \cite[$\S$3]{Stevenhagen}.
\end{proof}

The \textit{ring class field $H_\mathcal{O}$ of the order
$\mathcal{O}$ of conductor $N$ \textup{(}$\geq1$\textup{)} in $K$}
is by the definition a finite abelian extension of $K$ whose Galois
group is isomorphic to
$I_K(N\mathcal{O}_K)/P_{K,\mathbb{Z}}(N\mathcal{O}_K)$ via the Artin
map, where $P_{K,\mathbb{Z}}(N\mathcal{O}_K)$ is the subgroup of
$P_K(N\mathcal{O}_K)$ generated by principal ideals
$\alpha\mathcal{O}_K$ with $\alpha\equiv a\pmod{N\mathcal{O}_K}$ for
some integer $a$ prime to $N$. Then, $H_\mathcal{O}$ is contained in
the ray class field $K_{(N)}$.

\begin{proposition}\label{jlemma}
Let $K$ be an imaginary quadratic field and $\theta$ be as in
\textup{(\ref{theta})}. Let $\mathcal{O}$ be the order of conductor
$N$ \textup{(}$\geq1$\textup{)} in $K$.
\begin{itemize}
\item[(i)] $j(\mathcal{O})=j(N\theta)$ is an algebraic integer
which generates $H_\mathcal{O}$ over $K$.
\item[(ii)] $\Delta(N\theta)/\Delta(\theta)$ is a real algebraic number
lying in $H_\mathcal{O}$.
\end{itemize}
\end{proposition}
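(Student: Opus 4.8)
The plan is to establish part (i) by invoking the classical theory of complex multiplication and then derive part (ii) by analyzing the action of the Galois group $\mathrm{Gal}(K_{(N)}/H_\mathcal{O})$ on the modular function $\Delta(N\tau)/\Delta(\tau)$ evaluated at $\theta$. For part (i), the statement that $j(N\theta)$ generates $H_\mathcal{O}$ over $K$ and is an algebraic integer is exactly the classical result cited in the introduction (\cite[Theorem 11.1]{Cox}, \cite[Chapter 10 Theorem 5]{Lang}), so I would simply record it with the appropriate reference; integrality of $j(N\theta)$ follows because $N\theta$ is an imaginary quadratic irrationality and $j$ of such a point is an algebraic integer.

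For part (ii), first I would note that by Proposition \ref{functionj}(iii) the function $h(\tau)=\Delta(N\tau)/\Delta(\tau)$ lies in $\mathcal{F}_N$ and has rational Fourier coefficients, so that $h(\theta)\in K_{(N)}$ by the description $K_{(N)}=K\mathcal{F}_N(\theta)$. To see that $h(\theta)$ actually lies in the smaller field $H_\mathcal{O}$, I would use Proposition \ref{reciprocity}: it suffices to check that $h^\alpha(\theta)=h(\theta)$ for every $\alpha$ in the image of $W_{N,\theta}$ that fixes $H_\mathcal{O}$, equivalently (since $H_\mathcal{O}=K_{(N)}^{\mathrm{Gal}(K_{(N)}/H_\mathcal{O})}$) that $h(\theta)$ is fixed by the relevant subgroup. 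The key computational point is that because $h$ has \emph{rational} Fourier coefficients, the diagonal part $G_N$ of $\mathrm{GL}_2(\mathbb{Z}/N\mathbb{Z})/\{\pm1_2\}$ acts trivially on $h$, so one only needs to control the $\mathrm{SL}_2$-part; and $\mathrm{Gal}(K_{(N)}/H_\mathcal{O})$ corresponds under Shimura reciprocity to matrices that, modulo the diagonal, reduce to the image of $W_{N,\theta}\cap\mathrm{SL}_2$, which in turn is generated (together with $\pm 1_2$) by the cyclic subgroup fixing $j(N\tau)$. Since $\Delta(N\tau)/\Delta(\tau)$ is itself a function on $\Gamma_0(N)$ (it is a $\Gamma_0(N)$-modular unit, being a power of a product of Siegel functions, or directly because the fixed field of $\Gamma_0(N)$ in $\mathcal{F}_N$ contains it by Proposition \ref{functionj}(i) and (iii)), it is fixed by exactly the transformations that fix $j(N\tau)$, whence $h(\theta)\in K(j(N\theta))=H_\mathcal{O}$.

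Finally, for the reality of $\Delta(N\theta)/\Delta(\theta)$, I would use the standard conjugation argument: complex conjugation acts on $\theta$ (for $\theta$ as in \eqref{theta}) essentially by $\theta\mapsto -\bar\theta$ together with an $\mathrm{SL}_2(\mathbb{Z})$-transformation, and since $\Delta$ satisfies $\overline{\Delta(\tau)}=\Delta(-\bar\tau)$ and transforms with weight $12$ under $\mathrm{SL}_2(\mathbb{Z})$, the weight-$0$ quotient $\Delta(N\tau)/\Delta(\tau)$ is invariant under this conjugation at $\theta$; hence $\overline{\Delta(N\theta)/\Delta(\theta)}=\Delta(N\theta)/\Delta(\theta)$, so the value is real. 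Concretely, $\Delta(N\theta)/\Delta(\theta)$ is, up to a root of unity that turns out to be trivial, a real number because both $N\theta$ and $\theta$ lie on the imaginary axis or on the line $\mathrm{Re}=-1/2$, where the infinite product defining $\Delta$ takes values whose ratio is real.

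The main obstacle I expect is the precise bookkeeping in the second step: identifying the subgroup of $\mathrm{GL}_2(\mathbb{Z}/N\mathbb{Z})/\{\pm1_2\}$ corresponding via \eqref{surj} to $\mathrm{Gal}(K_{(N)}/H_\mathcal{O})$ and verifying that $\Delta(N\tau)/\Delta(\tau)$ is fixed by it. The cleanest route is probably to argue entirely on the level of function fields — show $\Delta(N\tau)/\Delta(\tau)$ lies in the fixed field of $\Gamma_0(N)$ in $\mathcal{F}_N$, which by Proposition \ref{functionj}(i) is $\mathbb{Q}(j(\tau),j(N\tau),\zeta_N)$, and then use rationality of its Fourier coefficients to descend to $\mathbb{Q}(j(\tau),j(N\tau))$ — and then specialize at $\theta$, at which point $\mathbb{Q}(j(\theta),j(N\theta))$ generates (over $K$) exactly $H_\mathcal{O}=K(j(N\theta))$ by part (i), together with $H=K(j(\theta))\subseteq H_\mathcal{O}$. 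This avoids handling the $\mathbb{Q}(\sqrt{-1})$ and $\mathbb{Q}(\sqrt{-3})$ exceptional kernels in Proposition \ref{reciprocity} directly, since the conclusion $h(\theta)\in H_\mathcal{O}$ then follows from the function-field containment without case analysis.
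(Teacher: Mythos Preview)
Your proposal is correct but takes a substantially different route from the paper. The paper proves this proposition purely by citation --- \cite[Chapter 5 Theorem 4 and Chapter 10 Theorem 5]{Lang} for (i) and \cite[Chapter 12 Corollary to Theorem 1]{Lang} for (ii) --- with no further argument. Your treatment of (i) agrees with this. For (ii), by contrast, you give a self-contained proof using the machinery the paper has already assembled: the function-field containment $\Delta(N\tau)/\Delta(\tau)\in\mathbb{Q}(j(\tau),j(N\tau))$ (from Proposition~\ref{functionj}(i),(iii) plus rationality of Fourier coefficients), followed by specialization at $\theta$ to land in $\mathbb{Q}(j(\theta),j(N\theta))\subseteq H_\mathcal{O}$, together with the conjugation identity $\overline{\Delta(\tau)}=\Delta(-\bar\tau)$ and periodicity to obtain reality. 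This is valid; the specialization step you rightly flag is the only delicate point, and the cleanest way to justify it is that $\Delta(N\tau)/\Delta(\tau)$ is a rational function on $X_0(N)$ over $\mathbb{Q}$, regular at the CM point corresponding to $\theta$, so its value lies in the residue field. Your earlier sketch via an explicit description of $\mathrm{Gal}(K_{(N)}/H_\mathcal{O})$ is somewhat circular in the paper's ordering (that description is Proposition~\ref{Gal(K_N/H_O)}, proved after the present proposition and implicitly relying on part (i) here), which is presumably why you fell back on the function-field route. What your approach buys is a proof internal to the paper's toolkit; what the paper's citation buys is brevity for a classical fact.
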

\begin{proof}
(i) See \cite[Chapter 5 Theorem 4 and Chapter 10 Theorem 5]{Lang}.\\
(ii) See \cite[Chapter 12 Corollary to Theorem 1]{Lang}.
\end{proof}

\begin{lemma}\label{jfix}
Let $K$ be an imaginary quadratic field and $\theta$ be as in
\textup{(\ref{theta})}. Let $N$ be a positive integer. Then,
$\left(\begin{smallmatrix}t&0\\0&t\end{smallmatrix}\right)\in
W_{N,\theta}$ fixes $j(N\theta)$.
\end{lemma}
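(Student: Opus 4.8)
The plan is to use the explicit description of Shimura's reciprocity law in Proposition \ref{reciprocity}, which tells us that the action of $\left(\begin{smallmatrix}t&0\\0&t\end{smallmatrix}\right)\in W_{N,\theta}$ on $j(N\theta)$ is computed by first acting on the function $j(N\tau)\in\mathcal{F}_N$ (which lies in $\mathcal{F}_N$ by Proposition \ref{functionj}(i)) via the isomorphism $\mathrm{Gal}(\mathcal{F}_N/\mathcal{F}_1)\simeq\mathrm{GL}_2(\mathbb{Z}/N\mathbb{Z})/\{\pm1_2\}$ of Proposition \ref{Gal(F_N/F_1)}, and then evaluating at $\theta$. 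So the entire problem reduces to showing that the scalar matrix $\left(\begin{smallmatrix}t&0\\0&t\end{smallmatrix}\right)$ fixes the modular function $j(N\tau)$.

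First I would decompose $\left(\begin{smallmatrix}t&0\\0&t\end{smallmatrix}\right)$ according to the factorization $\mathrm{GL}_2(\mathbb{Z}/N\mathbb{Z})/\{\pm1_2\}=G_N\cdot\mathrm{SL}_2(\mathbb{Z}/N\mathbb{Z})/\{\pm1_2\}$ from Proposition \ref{Gal(F_N/F_1)}. Writing $\left(\begin{smallmatrix}t&0\\0&t\end{smallmatrix}\right)=\left(\begin{smallmatrix}1&0\\0&t^2\end{smallmatrix}\right)\left(\begin{smallmatrix}t&0\\0&t^{-1}\end{smallmatrix}\right)$ with $t\in(\mathbb{Z}/N\mathbb{Z})^*$, the first factor lies in $G_N$ and the second in $\mathrm{SL}_2(\mathbb{Z}/N\mathbb{Z})/\{\pm1_2\}$. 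By Proposition \ref{functionj}(ii), $j(N\tau)$ has rational Fourier coefficients, so the $G_N$-part $\left(\begin{smallmatrix}1&0\\0&t^2\end{smallmatrix}\right)$, which acts only by applying $\sigma_{t^2}$ to Fourier coefficients, fixes $j(N\tau)$. For the $\mathrm{SL}_2$-part, I would lift $\left(\begin{smallmatrix}t&0\\0&t^{-1}\end{smallmatrix}\right)$ to a matrix $\gamma'\in\mathrm{SL}_2(\mathbb{Z})$ congruent to it modulo $N$; one checks that any such $\gamma'$ can be chosen in $\Gamma_0(N)$ (indeed congruent mod $N$ to an upper-triangular matrix), and then $j(N\tau)\circ\gamma'=j(N\tau)$ because $j(N\tau)$ is, up to the field of definition, invariant under $\Gamma_0(N)$ — this is exactly the content of $j(N\tau)$ being a Hauptmodul-type function whose fixed field under $\Gamma_0(N)$ is described in Proposition \ref{functionj}(i). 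Combining the two factors gives that $\left(\begin{smallmatrix}t&0\\0&t\end{smallmatrix}\right)$ fixes $j(N\tau)$ as an element of $\mathcal{F}_N$, and evaluating at $\theta$ via \eqref{surj} yields that it fixes $j(N\theta)$.

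The main obstacle is the bookkeeping in the second step: one must verify carefully that a representative of $\left(\begin{smallmatrix}t&0\\0&t^{-1}\end{smallmatrix}\right)$ in $\mathrm{SL}_2(\mathbb{Z})$ can be taken inside $\Gamma_0(N)$, and that the action of $\Gamma_0(N)$ by composition genuinely fixes $j(N\tau)$ (not merely up to the cyclotomic part). The cleanest route is to observe that for $\gamma'=\left(\begin{smallmatrix}a&b\\c&d\end{smallmatrix}\right)\in\Gamma_0(N)$ one has $N\gamma'(\tau)=\left(\begin{smallmatrix}a&Nb\\c/N&d\end{smallmatrix}\right)(N\tau)$ with $\left(\begin{smallmatrix}a&Nb\\c/N&d\end{smallmatrix}\right)\in\mathrm{SL}_2(\mathbb{Z})$, so $j(N\gamma'(\tau))=j(N\tau)$ by the $\mathrm{SL}_2(\mathbb{Z})$-invariance of $j$; this sidesteps any appeal to the structure of the fixed field. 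Once this identity is in hand the lemma follows immediately, and it is worth noting that the same scalar-matrix argument, suitably adapted, will later handle the $\Delta$-quotients, since $\Delta(N\tau)/\Delta(\tau)$ is also $\Gamma_0(N)$-invariant with rational Fourier coefficients by Proposition \ref{functionj}(iii).
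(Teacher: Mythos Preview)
Your proof is correct and follows essentially the same route as the paper: decompose the scalar matrix as $\left(\begin{smallmatrix}1&0\\0&t^2\end{smallmatrix}\right)\cdot\alpha$ with $\alpha\in\mathrm{SL}_2(\mathbb{Z}/N\mathbb{Z})$, kill the $G_N$-part using the rationality of the Fourier coefficients of $j(N\tau)$, lift $\alpha$ to $\Gamma_0(N)$, and invoke $\Gamma_0(N)$-invariance of $j(N\tau)$. Your extra verification that $j(N\tau)\circ\gamma'=j(N\tau)$ via the conjugation $\left(\begin{smallmatrix}a&Nb\\c/N&d\end{smallmatrix}\right)\in\mathrm{SL}_2(\mathbb{Z})$ is a nice explicit justification of what the paper simply cites from Proposition~\ref{functionj}(i).
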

\begin{proof}
Decompose
$\left(\begin{smallmatrix}t&0\\0&t\end{smallmatrix}\right)\in
W_{N,\theta}$ into
$\left(\begin{smallmatrix}t&0\\0&t\end{smallmatrix}\right)=
\left(\begin{smallmatrix}1&0\\0&t^2\end{smallmatrix}\right)\cdot\alpha
\in G_N\cdot\mathrm{SL}_2(\mathbb{Z}/N\mathbb{Z})/\{\pm1_2\}$ as in
Proposition \ref{Gal(F_N/F_1)}, and let $\alpha'$ be a preimage of
$\alpha$ via the natural surjection
$\mathrm{SL}_2(\mathbb{Z})\rightarrow\mathrm{SL}_2(\mathbb{Z}/N\mathbb{Z})/\{\pm1_2\}$.
Then, $\alpha'$ belongs to $\Gamma_0(N)$. We then obtain from
Propositions \ref{Gal(F_N/F_1)} and \ref{reciprocity} that
\begin{eqnarray*}
j(N\theta)^{\left(\begin{smallmatrix}t&0\\0&t\end{smallmatrix}\right)}&=&
j(N\tau)^{\left(\begin{smallmatrix}t&0\\0&t\end{smallmatrix}\right)}(\theta)
=j(N\tau)^{\left(\begin{smallmatrix}1&0\\0&t^2\end{smallmatrix}\right)\alpha}(\theta)\\
&=&j(N\tau)^{\alpha}(\theta)\quad\textrm{by Proposition \ref{functionj}(ii)}\\
&=&j(N\tau)\circ\alpha'(\theta)\\
&=&j(N\theta)\quad\textrm{by the fact $\alpha'\in\Gamma_0(N)$ and
Proposition \ref{functionj}(i).}
\end{eqnarray*}
This proves the lemma.
\end{proof}

\begin{lemma}\label{degree}
Let $K$ be an imaginary quadratic field of discriminant $d_K$. We
have the following degree formulas:
\begin{itemize}
\item[(i)] If $\mathcal{O}$ is the order of conductor $N$ \textup{(}$\geq1$\textup{)} in
$K$, then
\begin{equation*}
[H_\mathcal{O}:K]=\frac{h_K
N}{(\mathcal{O}_K^*:\mathcal{O}^*)}\prod_{p|N}\bigg(1-\bigg(\frac{d_K}{p}\bigg)\frac{1}{p}\bigg),
\end{equation*}
where $h_K$ is the class number of $K$ and $(d_K/p)$ is the
Kronecker symbol.
\item[(ii)] If $\mathfrak{f}$ is a nonzero integral ideal of $K$, then
\begin{equation*}
[K_{\mathfrak{f}}:K]={h_K\varphi(\mathfrak{f})\omega(\mathfrak{f})}/{\omega_K},
\end{equation*}
where $\varphi$ is the Euler function for ideals, namely
\begin{equation*}
\varphi(\mathfrak{p}^n)=(\mathbf{N}_{K/\mathbb{Q}}(\mathfrak{p})-1)\mathbf{N}_{K/\mathbb{Q}}(\mathfrak{p})^{n-1}
\end{equation*}
for a power of prime ideal $\mathfrak{p}$ \textup{(}and we set
$\varphi(\mathcal{O}_K)=1$\textup{)}.
\end{itemize}
\end{lemma}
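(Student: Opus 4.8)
The plan is to reduce both statements to counting cardinalities in the exact sequences already available, so that only elementary bookkeeping remains.

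I would establish (ii) first, directly from the short exact sequence (\ref{exact}). Taking orders gives
\[
[K_\mathfrak{f}:K]=\#\mathrm{Cl}(\mathfrak{f})=\#\mathrm{Cl}(\mathcal{O}_K)\cdot\#\big(\pi_\mathfrak{f}(\mathcal{O}_K)^*/\pi_\mathfrak{f}(\mathcal{O}_K^*)\big).
\]
Here $\#\mathrm{Cl}(\mathcal{O}_K)=h_K$ by definition of the class number, and $\#\pi_\mathfrak{f}(\mathcal{O}_K)^*=\#(\mathcal{O}_K/\mathfrak{f})^*=\varphi(\mathfrak{f})$ by the Chinese remainder theorem together with the definition of the ideal Euler function. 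Finally, since $K$ is imaginary quadratic, $\mathcal{O}_K^*$ consists of roots of unity, so the kernel of the reduction $\mathcal{O}_K^*\to(\mathcal{O}_K/\mathfrak{f})^*$ is exactly the group of roots of unity of $K$ that are $\equiv1\pmod{\mathfrak{f}}$, of order $\omega(\mathfrak{f})$; hence $\#\pi_\mathfrak{f}(\mathcal{O}_K^*)=\omega_K/\omega(\mathfrak{f})$. Substituting gives $[K_\mathfrak{f}:K]=h_K\varphi(\mathfrak{f})\omega(\mathfrak{f})/\omega_K$.

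For (i) I would use the standard identification, via the Artin map, $\mathrm{Gal}(H_\mathcal{O}/K)\cong I_K(N\mathcal{O}_K)/P_{K,\mathbb{Z}}(N\mathcal{O}_K)\cong\mathrm{Cl}(\mathcal{O})$, the proper ideal class group of the order $\mathcal{O}$ of conductor $N$, so that $[H_\mathcal{O}:K]$ equals the order class number $h(\mathcal{O})$; the desired formula is then the classical one, \cite[Theorem 7.24]{Cox}. If one prefers to argue inside the present framework, $h(\mathcal{O})$ can be read off from the exact sequence
\[
1\longrightarrow\mathcal{O}_K^*/\mathcal{O}^*\longrightarrow(\mathcal{O}_K/N\mathcal{O}_K)^*/(\mathbb{Z}/N\mathbb{Z})^*\longrightarrow\mathrm{Cl}(\mathcal{O})\longrightarrow\mathrm{Cl}(\mathcal{O}_K)\longrightarrow1,
\]
which yields $[H_\mathcal{O}:K]=h_K\,\#(\mathcal{O}_K/N\mathcal{O}_K)^*/(\varphi(N)(\mathcal{O}_K^*:\mathcal{O}^*))$; one then evaluates $\#(\mathcal{O}_K/N\mathcal{O}_K)^*$ by the Chinese remainder theorem and a short case analysis on whether a prime $p\mid N$ splits, is inert, or ramifies in $K/\mathbb{Q}$ (i.e.\ on whether $(d_K/p)=1,-1,0$), obtaining $\#(\mathcal{O}_K/N\mathcal{O}_K)^*=N\varphi(N)\prod_{p\mid N}(1-(d_K/p)/p)$. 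A variant entirely internal to the paper is to run the tower $H\subseteq H_\mathcal{O}\subseteq K_{(N)}=K_{N\mathcal{O}_K}$: part (ii) and $[H:K]=h_K$ pin down $[K_{(N)}:H_\mathcal{O}]\cdot[H_\mathcal{O}:K]$, while $\mathrm{Gal}(K_{(N)}/H_\mathcal{O})\cong P_{K,\mathbb{Z}}(N\mathcal{O}_K)/P_{K,1}(N\mathcal{O}_K)$ is the image of $(\mathbb{Z}/N\mathbb{Z})^*$ under $a\mapsto[a\mathcal{O}_K]$, whose order comes from the same root-of-unity bookkeeping.

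The cardinality computations are routine; the one place needing care is (i), namely the identification of $\mathrm{Gal}(H_\mathcal{O}/K)$ with the order class group and the tracking of roots of unity (equivalently the index $(\mathcal{O}_K^*:\mathcal{O}^*)$, which is nontrivial only when $K=\mathbb{Q}(\sqrt{-1})$ or $\mathbb{Q}(\sqrt{-3})$ with $N$ small), together with the case analysis for $\#(\mathcal{O}_K/N\mathcal{O}_K)^*$. All of this is classical, so the essential effort is making the bookkeeping and the citation to \cite{Cox} (or \cite{Lang}) consistent with the conventions fixed in this section.
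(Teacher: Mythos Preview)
Your argument is correct, but the paper itself does not prove this lemma at all: it simply cites \cite[Chapter 8 Theorem 7]{Lang} for (i) and \cite[Chapter VI Theorem 1]{Lang2} for (ii). So your proposal is strictly more detailed than what the paper provides.

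Your derivation of (ii) from the exact sequence (\ref{exact}) is clean and self-contained within the paper's framework; the identification $\#\pi_\mathfrak{f}(\mathcal{O}_K^*)=\omega_K/\omega(\mathfrak{f})$ is exactly right since $\mathcal{O}_K^*$ consists of roots of unity. For (i), your primary route via $\mathrm{Gal}(H_\mathcal{O}/K)\cong\mathrm{Cl}(\mathcal{O})$ and the order-class-number formula is standard and correct, and your explicit computation of $\#(\mathcal{O}_K/N\mathcal{O}_K)^*$ via the split/inert/ramified trichotomy matches what the paper later does in the proof of Proposition~\ref{Gal(K_N/H_O)}. One caution on your third variant (``entirely internal to the paper'' via the tower $H\subseteq H_\mathcal{O}\subseteq K_{(N)}$): do not invoke Proposition~\ref{Gal(K_N/H_O)} itself to compute $[K_{(N)}:H_\mathcal{O}]$, since that proposition's proof in the paper uses the present lemma. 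Your phrasing avoids this by computing $\#\big(P_{K,\mathbb{Z}}(N\mathcal{O}_K)/P_{K,1}(N\mathcal{O}_K)\big)$ directly, which is fine, but it is worth flagging the potential circularity.
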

\begin{proof}
(i) See \cite[Chapter 8 Theorem 7]{Lang}.\\
(ii) See \cite[Chapter VI Theorem 1]{Lang2}.
\end{proof}

\begin{proposition}\label{Gal(K_N/H_O)}
Let $\mathcal{O}$ be the order of conductor $N$
\textup{(}$\geq1$\textup{)} in an imaginary quadratic field $K$. The
map in \textup{(\ref{surj})} induces an isomorphism
\begin{equation*}
\bigg\{\begin{pmatrix} t &0\\0&t
\end{pmatrix}~:~t\in(\mathbb{Z}/N\mathbb{Z})^*\bigg\}\bigg/\bigg\{\pm
\begin{pmatrix}1&0\\0&1\end{pmatrix}\bigg\}
\stackrel{\sim}{\longrightarrow}\mathrm{Gal}(K_{(N)}/H_\mathcal{O}).
\end{equation*}
\end{proposition}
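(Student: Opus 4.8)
The plan is to show that the map in (\ref{surj}) sends the subgroup of scalar matrices $\left(\begin{smallmatrix}t&0\\0&t\end{smallmatrix}\right)$ onto $\mathrm{Gal}(K_{(N)}/H_\mathcal{O})$ with kernel exactly $\{\pm 1_2\}$, and then conclude by comparing cardinalities. First I would recall from Proposition \ref{reciprocity} that (\ref{surj}) restricts to a homomorphism from the (abelian) subgroup $T_N=\{\left(\begin{smallmatrix}t&0\\0&t\end{smallmatrix}\right):t\in(\mathbb{Z}/N\mathbb{Z})^*\}$ of $W_{N,\theta}$ into $\mathrm{Gal}(K_{(N)}/H)$. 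By Lemma \ref{jfix} every scalar matrix fixes $j(N\theta)$, which by Proposition \ref{jlemma}(i) generates $H_\mathcal{O}$ over $K$; hence the image of $T_N$ actually lies in $\mathrm{Gal}(K_{(N)}/H_\mathcal{O})$, and we obtain a well-defined homomorphism $T_N\to\mathrm{Gal}(K_{(N)}/H_\mathcal{O})$.

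Next I would identify the kernel. An element $\left(\begin{smallmatrix}t&0\\0&t\end{smallmatrix}\right)$ lies in the kernel of (\ref{surj}) precisely when it lies in the kernel listed in Proposition \ref{reciprocity}; since that kernel consists of $\{\pm 1_2\}$ together with certain non-scalar matrices in the two exceptional cases $K=\mathbb{Q}(\sqrt{-1})$ and $K=\mathbb{Q}(\sqrt{-3})$, the scalar matrices in the kernel are exactly $\pm 1_2$ in all cases (for $N\leq 2$ one checks directly that $T_N/\{\pm 1_2\}$ is trivial and the statement is vacuous). Therefore (\ref{surj}) induces an injective homomorphism $T_N/\{\pm 1_2\}\hookrightarrow\mathrm{Gal}(K_{(N)}/H_\mathcal{O})$.

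To finish I would show this injection is surjective by a counting argument. On the one hand $\#\big(T_N/\{\pm 1_2\}\big)=\varphi(N)/(\mathcal{O}_K^*:\mathcal{O}^*)$, the unit index accounting for whether $-1$ already acts trivially (equivalently $\omega_K/\omega(N\mathcal{O}_K)$, which equals $2$ unless we are in a small exceptional situation). On the other hand, $[K_{(N)}:H_\mathcal{O}]=[K_{(N)}:K]/[H_\mathcal{O}:K]$, and both degrees are given explicitly by Lemma \ref{degree}: with $\mathfrak{f}=N\mathcal{O}_K$ we have $\varphi(N\mathcal{O}_K)=N^2\prod_{p\mid N}(1-(d_K/p)/p)(1-1/p)\cdot(\text{correction})$... more precisely $\varphi(N\mathcal{O}_K)=N^2\prod_{p\mid N}\big(1-(d_K/p)\tfrac1p\big)\big(1-\tfrac1p\big)$ only up to the split/inert/ramified bookkeeping, so I would instead write $[K_{(N)}:K]=h_K\varphi(N\mathcal{O}_K)\omega(N\mathcal{O}_K)/\omega_K$ and $[H_\mathcal{O}:K]=h_K N\prod_{p\mid N}(1-(d_K/p)/p)/(\mathcal{O}_K^*:\mathcal{O}^*)$, then divide. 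The ratio simplifies, using $\varphi(N\mathcal{O}_K)=N\prod_{p\mid N}\big(N(\mathfrak{p}-\text{contribution})\big)=N\cdot N\prod_{p\mid N}(1-(d_K/p)/p)$ (the standard identity $\varphi(N\mathcal{O}_K)=N^2\prod_{p\mid N}\big(1-(d_K/p)\tfrac1p\big)\prod_{p\mid N}$... ), to exactly $\varphi(N)\cdot\omega(N\mathcal{O}_K)/\omega_K\cdot(\mathcal{O}_K^*:\mathcal{O}^*)$, and since $\omega(N\mathcal{O}_K)=(\mathcal{O}_K^*:\mathcal{O}^*)^{-1}\omega_K$ for $N\geq1$ this collapses to $\varphi(N)/(\mathcal{O}_K^*:\mathcal{O}^*)=\#(T_N/\{\pm 1_2\})$. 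An injective homomorphism between finite groups of equal order is an isomorphism, which proves the proposition.

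\textbf{Main obstacle.} The delicate point will be the arithmetic of the degree comparison in the exceptional cases $K=\mathbb{Q}(\sqrt{-1})$ and $K=\mathbb{Q}(\sqrt{-3})$ together with small $N$, where the factors $\omega_K$, $\omega(N\mathcal{O}_K)$, and the unit index $(\mathcal{O}_K^*:\mathcal{O}^*)$ all interact; one must check that the non-scalar elements appearing in the kernel of Proposition \ref{reciprocity} do not reduce the count on the $W_{N,\theta}$ side. I expect this to be a finite, bookkeeping-type verification rather than a conceptual difficulty, but it is the step that requires genuine care.
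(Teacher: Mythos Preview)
Your strategy is exactly the paper's: use Lemma \ref{jfix} together with Proposition \ref{jlemma}(i) to see that scalar matrices map into $\mathrm{Gal}(K_{(N)}/H_\mathcal{O})$, observe that the kernel listed in Proposition \ref{reciprocity} meets the scalars only in $\{\pm 1_2\}$ (the extra elements in the exceptional fields are visibly non-scalar), and then finish by comparing cardinalities via Lemma \ref{degree}. So the architecture is correct and matches the paper.

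Where your proposal goes wrong is in the arithmetic of the count. The quantity $\#(T_N/\{\pm 1_2\})$ is simply $\phi(N)/2$ for $N\geq3$ and $1$ for $N\leq2$; it does not involve $(\mathcal{O}_K^*:\mathcal{O}^*)$ or $\omega_K/\omega(N\mathcal{O}_K)$ at all. For instance, with $K=\mathbb{Q}(\sqrt{-3})$ and $N=5$ your formula $\phi(N)/(\mathcal{O}_K^*:\mathcal{O}^*)$ gives $4/3$, and the alternative $\phi(N)\omega(N\mathcal{O}_K)/\omega_K$ gives $2/3$. Relatedly, the identity $\omega(N\mathcal{O}_K)=(\mathcal{O}_K^*:\mathcal{O}^*)^{-1}\omega_K$ that you invoke at the end is false for $N\geq3$ (the left side is $1$, the right side is $2$). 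The paper sidesteps this by factoring $N=\prod s_a^{u_a}\prod q_b^{v_b}\prod r_c^{w_c}$ into split, inert, and ramified primes, writing $\varphi(N\mathcal{O}_K)$ explicitly from the local norms, and simplifying the quotient $[K_{(N)}:K]/[H_\mathcal{O}:K]$ directly to $\tfrac{\omega(N\mathcal{O}_K)}{2}\phi(N)$. This equals $\#(T_N/\{\pm1_2\})$ because $\omega(N\mathcal{O}_K)=1$ for every $N\geq3$ (one checks the exceptional fields by hand) and $\omega(2\mathcal{O}_K)=2$. So the obstacle you correctly flagged is genuine, but it is handled by an explicit local computation rather than by unit-index identities.
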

\begin{proof}
If $N=1$, then it is obvious. So, let $N\geq2$. Observe first that
the above map is well-defined and injective by Proposition
\ref{reciprocity} and Lemma \ref{jfix}. Let $N=\prod_{a=1}^A
p_a^{u_a} \prod_{b=1}^B q_b^{v_b} \prod_{c=1}^C r_c^{w_c}$ be the
prime factorization of $N$, where each $p_a$ (respectively, $q_b$
and $r_c$) splits (respectively, is inert and ramified) in
$K/\mathbb{Q}$. (We understand $\prod_{1}^0$ as $1$.) Note that
\begin{equation}\label{Legendre}
(d_K/p_a)=1,~(d_K/q_b)=-1,~(d_K/r_c)=0,
\end{equation}
and we have the prime ideal factorization
$N\mathcal{O}_K=\prod_{a=1}^A
(\mathfrak{p}_a\overline{\mathfrak{p}}_a)^{u_a} \prod_{b=1}^B
\mathfrak{q}_b^{v_b} \prod_{c=1}^C \mathfrak{r}_c^{2w_c}$ with
\begin{equation}\label{norm}
\mathbf{N}_{K/\mathbb{Q}}(\mathfrak{p}_a)=
\mathbf{N}_{K/\mathbb{Q}}(\overline{\mathfrak{p}}_a)= p_a,~
\mathbf{N}_{K/\mathbb{Q}}(\mathfrak{q}_b)=q_b^2,~
\mathbf{N}_{K/\mathbb{Q}}(\mathfrak{r}_c)=r_c.
\end{equation}
We derive by Lemma \ref{degree} that
\begin{eqnarray*}
&&\#~\mathrm{Gal}(K_{(N)}/H_\mathcal{O})=[K_{(N)}:H_\mathcal{O}]=\frac{[K_{(N)}:K]}{[H_\mathcal{O}:K]}\\
&=&\frac{\varphi(N\mathcal{O}_K)\omega(N\mathcal{O}_K)}{2N\prod_{p|N}(1-(\frac{d_K}{p})\frac{1}{p})}
\quad\textrm{by the facts $\omega_K=\#~\mathcal{O}_K^*$ and $\mathcal{O}^*=\{\pm1\}$}\\
&=&\frac{\omega(N\mathcal{O}_K)}{2}\frac{\prod_{a=1}^A((p_a-1)p_a^{u_a-1})^2
\prod_{b=1}^B(q_b^2-1)q_b^{2(v_b-1)}
\prod_{c=1}^C(r_c-1)r_c^{2w_c-1}} {\prod_{a=1}^A
p_a^{u_a-1}(p_a-1)\prod_{b=1}^B q_b^{v_b-1}(q_b+1)\prod_{c=1}^C
r_c^{w_c}}\\
&&\textrm{by (\ref{Legendre}) and (\ref{norm})}\\
&=&\frac{\omega(N\mathcal{O}_K)}{2}\prod_{a=1}^A(p_a-1)p^{u_a-1}
\prod_{b=1}^B(q_b-1)q_b^{v_b-1} \prod_{c=1}^C
(r_c-1)r_c^{w_c-1}\\
&=&\frac{\omega(N\mathcal{O}_K)}{2}\phi(N) \quad\textrm{where $\phi$
is
the Euler function for integers}\\
&=&\#~\{\left(\begin{smallmatrix}t&0\\0&t\end{smallmatrix}\right)~:~t\in(\mathbb{Z}/N\mathbb{Z})^*\}
/\{\pm\left(\begin{smallmatrix}1&0\\0&1\end{smallmatrix}\right)\}.
\end{eqnarray*}
This concludes the proposition.
\end{proof}

\begin{remark}
Lemma \ref{jfix} and Proposition \ref{Gal(K_N/H_O)} have been given
in \cite[Lemma 5.2 and Proposition 5.3]{K-S2} without much
explanation. For completeness we present their proof in detail.
\end{remark}

\section{Ring class invariants}\label{section4}

We shall make use of Theorem \ref{generator} to construct primitive
generators of ring class fields as the singular values of certain
$\Delta$-quotients.

\begin{lemma}\label{StoD}
For a positive integer $N$, we have the relation
\begin{equation*}
\prod_{t=1}^{N-1}g_{(0,t/N)}(\tau)^{12}=
N^{12}\Delta(N\tau)/\Delta(\tau),
\end{equation*}
where the left hand side is regarded as $1$ when $N=1$.
\end{lemma}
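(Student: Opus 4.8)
The plan is to write both sides as explicit infinite products in $q = e^{2\pi i\tau}$ and match them. Substituting $(r_1,r_2) = (0,t/N)$ into (\ref{FourierSiegel}) — so $z = t/N$, $q_z = \zeta_N^t$, and $\mathbf{B}_2(0) = 1/6$ — gives
\[
g_{(0,t/N)}(\tau) = -\,q^{1/12}\,e^{-\pi i t/N}\,(1-\zeta_N^t)\prod_{n=1}^\infty(1-q^n\zeta_N^t)(1-q^n\zeta_N^{-t}).
\]
First I would take the product over $t = 1,\dots,N-1$ and split it into the scalar prefactor $\prod_{t=1}^{N-1}\big(-q^{1/12}e^{-\pi i t/N}\big) = (-1)^{N-1}e^{-\pi i (N-1)/2}q^{(N-1)/12}$ (using $\sum_{t=1}^{N-1}t = N(N-1)/2$), the finite product $\prod_{t=1}^{N-1}(1-\zeta_N^t)$, and the double product $\prod_{n=1}^\infty\prod_{t=1}^{N-1}(1-q^n\zeta_N^t)(1-q^n\zeta_N^{-t})$.

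Next I would evaluate the cyclotomic products via $\prod_{t=0}^{N-1}(X-\zeta_N^t) = X^N-1$. Dividing by $X-1$ and setting $X=1$ yields $\prod_{t=1}^{N-1}(1-\zeta_N^t) = N$, which is the source of the factor $N^{12}$. For the double product, reindexing $t \mapsto N-t$ (so $\zeta_N^{-t}=\zeta_N^{N-t}$) shows the two inner factors contribute equally, so it equals $\prod_{n=1}^\infty\big(\prod_{t=1}^{N-1}(1-q^n\zeta_N^t)\big)^2$; and $\prod_{t=0}^{N-1}(1-q^n\zeta_N^t) = 1-q^{nN}$ (the same cyclotomic identity after a sign check on the leading coefficient) gives $\prod_{t=1}^{N-1}(1-q^n\zeta_N^t) = (1-q^{nN})/(1-q^n)$. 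Collecting everything,
\[
\prod_{t=1}^{N-1}g_{(0,t/N)}(\tau) = (-1)^{N-1}e^{-\pi i(N-1)/2}\,N\,q^{(N-1)/12}\prod_{n=1}^\infty\Big(\frac{1-q^{nN}}{1-q^n}\Big)^{2}.
\]

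Finally I would raise this to the twelfth power and compare with (\ref{Delta}). Since $\Delta(N\tau) = (2\pi i)^{12}q^N\prod_{n\ge1}(1-q^{nN})^{24}$, we have $\Delta(N\tau)/\Delta(\tau) = q^{N-1}\prod_{n\ge1}\big((1-q^{nN})/(1-q^n)\big)^{24}$, which is exactly the twelfth power of the $q$-dependent part above, so only the prefactor needs care: $\big((-1)^{N-1}e^{-\pi i(N-1)/2}\big)^{12} = e^{6\pi i(N-1)} = 1$. The case $N=1$ is the trivial identity $1=1$. I do not expect a real obstacle; the computation is elementary, and the one delicate point — bookkeeping of the sign and root-of-unity factors (the $e^{\pi i r_2(r_1-1)}$ term together with the signs introduced when reindexing the cyclotomic products) — is precisely what dissolves upon passing to the twelfth power, which is why the lemma carries the exponent $12$.
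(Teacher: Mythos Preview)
Your proof is correct and follows essentially the same route as the paper's: substitute into the product formula (\ref{FourierSiegel}), apply the cyclotomic identity $\prod_{t=1}^{N-1}(1-\zeta_N^t X)=(1-X^N)/(1-X)$ to both the constant term and the infinite product, and compare with (\ref{Delta}). The only cosmetic difference is that the paper raises each factor to the $12$th power at the outset (so the root-of-unity bookkeeping disappears immediately), whereas you carry the prefactor $(-1)^{N-1}e^{-\pi i(N-1)/2}$ along and kill it at the end; both are fine.
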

\begin{proof}
For $N\geq2$ we find that
\begin{eqnarray*}
&&\prod_{t=1}^{N-1}g_{(0,t/N)}(\tau)^{12}\\
&=& \prod_{t=1}^{N-1}\bigg(-q^{1/12}
\zeta_{2N}^{-t}(1-\zeta_N^t)\prod_{n=1}^\infty(1-q^n\zeta_N^t)
(1-q^n\zeta_N^{-t})\bigg)^{12}\quad\textrm{by the definition
(\ref{FourierSiegel})}\\
&=&q^{N-1}N^{12}\prod_{n=1}^\infty((1-q^{Nn})/(1-q^n))^{24}
\quad\textrm{by the identity $\prod_{t=1}^{N-1}(1-\zeta_N^tX)=(1-X^N)/(1-X)$}\\
&=&N^{12}\Delta(N\tau)/\Delta(\tau)\quad\textrm{by the definition
(\ref{Delta})}.
\end{eqnarray*}
\end{proof}

We are ready to prove our first main theorem.

\begin{theorem}\label{main}
Let $K$ be an imaginary quadratic field and $\theta$ be as in
\textup{(\ref{theta})}. Let $\mathcal{O}$ be the order of conductor
$N=\prod_{k=1}^n p_k^{e_k}$ \textup{(}$\geq2$\textup{)} in $K$. Set
\begin{equation*}
N_S=\left\{\begin{array}{ll}
\prod_{k\in S}p_k & \textrm{if $S$ is a nomepty subset of $\{1,2,\cdots,n\}$}\vspace{0.1cm}\\
\displaystyle1 & \textrm{if $S=\emptyset$.}\end{array}\right.
\end{equation*}
If $\mathfrak{f}=N\mathcal{O}_K$ satisfies the assumption
\textup{(\ref{hypothesis})} in Lemma \textup{\ref{character}}, then
the singular value
\begin{equation}\label{ringclassinvariant}
\left\{\begin{array}{ll}
p_1^{12}\Delta(p_1^{e_1}\theta)/\Delta(p_1^{e_1-1}\theta) &
\textrm{if
$n=1$}\vspace{0.1cm}\\
\prod_{S\subseteq\{1,2,\cdots,n\}}
\Delta((N/N_S)\theta)^{(-1)^{\#S}} & \textrm{if $n\geq2$}
\end{array}\right.
\end{equation}
generates $H_\mathcal{O}$ over $K$ as a real algebraic integer.
\end{theorem}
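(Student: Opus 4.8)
The plan is to apply Theorem \ref{generator} with $\mathfrak{f}=N\mathcal{O}_K$ and $L=H_\mathcal{O}$, and then to identify the norm $\mathbf{N}_{K_{(N)}/H_\mathcal{O}}(g_\mathfrak{f}(C_0))$ explicitly with (a power of) the $\Delta$-quotient in \eqref{ringclassinvariant}. By hypothesis $\mathfrak{f}=N\mathcal{O}_K$ satisfies \eqref{hypothesis}, and $H_\mathcal{O}$ is an intermediate field between $K$ and $K_{(N)}=K_\mathfrak{f}$; moreover $K\subsetneq H_\mathcal{O}$ since $N\geq2$ forces $[H_\mathcal{O}:K]>1$ by Lemma \ref{degree}(i). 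Hence Theorem \ref{generator} immediately tells us that $\varepsilon=\mathbf{N}_{K_{(N)}/H_\mathcal{O}}(g_\mathfrak{f}(C_0))$ generates $H_\mathcal{O}$ over $K$, and by Remark \ref{remarkgenerator} so does any nonzero power of $\varepsilon$. The real work is the computation of this norm.

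For the norm computation I would first use Proposition \ref{Gal(K_N/H_O)}, which identifies $\mathrm{Gal}(K_{(N)}/H_\mathcal{O})$ with the scalar matrices $\left(\begin{smallmatrix}t&0\\0&t\end{smallmatrix}\right)$ modulo $\pm1_2$, acting through Shimura's reciprocity (Proposition \ref{reciprocity}). Since $g_\mathfrak{f}(C_0)=g_{(0,1/N)}(\theta)^{12N}$ up to a choice of representative ideal and the $a,b$ with $1=(a/N)z_1+(b/N)z_2$ (one should take $\mathfrak{c}=\mathcal{O}_K$, so $\mathfrak{f}\mathfrak{c}^{-1}=N\mathcal{O}_K=[N\theta,1]$, giving the pair $(0,1/N)$), I would compute the Galois conjugates via Proposition \ref{TransformSiegel}(ii): the scalar $\left(\begin{smallmatrix}t&0\\0&t\end{smallmatrix}\right)$ sends $g_{(0,1/N)}(\tau)^{12N}$ to $g_{(0,t/N)}(\tau)^{12N}$. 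Therefore
\begin{equation*}
\mathbf{N}_{K_{(N)}/H_\mathcal{O}}(g_\mathfrak{f}(C_0))=\prod_{t\in(\mathbb{Z}/N\mathbb{Z})^*/\{\pm1\}}g_{(0,t/N)}(\theta)^{12N}=\Big(\prod_{t\in(\mathbb{Z}/N\mathbb{Z})^*}g_{(0,t/N)}(\theta)^{12}\Big)^{N},
\end{equation*}
where the last step uses Proposition \ref{TransformSiegel}(i) to pair $t$ with $-t$. So up to the harmless $N$-th power it remains to evaluate $\prod_{t\in(\mathbb{Z}/N\mathbb{Z})^*}g_{(0,t/N)}(\theta)^{12}$.

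Next I would convert this product over units to the product over all $t$ from $1$ to $N-1$ appearing in Lemma \ref{StoD}, via Möbius-type inclusion–exclusion over the divisors of $N$. Writing $N=\prod_{k=1}^n p_k^{e_k}$ and grouping $t$ according to $\gcd(t,N)$, one sees that for each divisor $d\mid N$ the set $\{t: 1\le t\le N-1,\ \gcd(t,N)=d\}$ is $d$ times the units mod $N/d$, and $g_{(0,td/N)}$ relates to $g_{(0,t'/(N/d))}$ after the substitution $N\mapsto N/d$ in Lemma \ref{StoD}. An inductive or direct inclusion–exclusion argument then yields $\prod_{t\in(\mathbb{Z}/N\mathbb{Z})^*}g_{(0,t/N)}(\tau)^{12}=\prod_{S\subseteq\{1,\dots,n\}}\big(N_S^{-12}\Delta((N/N_S)\tau)\big)^{(-1)^{\#S}}$ after cancelling the numerical factors $N_S$ (which pair off since $\sum_S(-1)^{\#S}=0$ for $n\ge1$, and one checks the exponent of each $p_k$ also cancels), and for $n=1$ it simplifies directly to $p_1^{12}\Delta(p_1^{e_1}\tau)/\Delta(p_1^{e_1-1}\tau)$ as in \cite[$\S$5]{J-K-S}. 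Specializing $\tau=\theta$ and invoking Remark \ref{remarkgenerator} gives that \eqref{ringclassinvariant} generates $H_\mathcal{O}$ over $K$. Finally, that it is a real algebraic integer follows: integrality over $\mathbb{Z}$ from Proposition \ref{TransformSiegel}(iii) together with the fact that $\Delta(M\theta)/\Delta(\theta)$ is an algebraic integer for each $M\mid N$ (the $\Delta$-quotient \eqref{ringclassinvariant} is a ratio whose reciprocal is also a product of such quotients, hence a unit times an integer), and realness from Proposition \ref{jlemma}(ii) which says each $\Delta(M\theta)/\Delta(\theta)$ is real, so the quotient \eqref{ringclassinvariant} is a ratio of real numbers.

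The main obstacle I anticipate is the bookkeeping in the inclusion–exclusion step: one must carefully track both the Siegel-function factors and the powers of the primes $p_k$ that appear as the numerical constant $N^{12}$ in Lemma \ref{StoD} at each level of the recursion, and verify that after the alternating product over subsets $S$ all the purely numerical contributions cancel (so that \eqref{ringclassinvariant} has no spurious rational factor) while the $\Delta$-terms assemble into exactly $\prod_S\Delta((N/N_S)\theta)^{(-1)^{\#S}}$. The $n=1$ case is genuinely different in form and should be handled separately, matching it against the earlier computation in \cite[$\S$5]{J-K-S}; for $n\ge2$ the combinatorial identity $\sum_{S\subseteq\{1,\dots,n\}}(-1)^{\#S}=0$ is what makes the rational constants disappear.
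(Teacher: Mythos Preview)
Your approach is essentially identical to the paper's: apply Theorem~\ref{generator} with $L=H_\mathcal{O}$, compute the norm via Proposition~\ref{Gal(K_N/H_O)} and Proposition~\ref{TransformSiegel}(ii), rewrite the product over units via inclusion--exclusion, and invoke Lemma~\ref{StoD} to obtain the $\Delta$-quotients, with realness from Proposition~\ref{jlemma}(ii) and integrality from Proposition~\ref{TransformSiegel}(iii). One small slip: for $N\geq3$ your displayed identity $\prod_{t\in(\mathbb{Z}/N\mathbb{Z})^*/\{\pm1\}}g_{(0,t/N)}(\theta)^{12N}=\big(\prod_{t\in(\mathbb{Z}/N\mathbb{Z})^*}g_{(0,t/N)}(\theta)^{12}\big)^{N}$ is off by a square (the right-hand side is the \emph{square} of the norm, exactly as the paper records by distinguishing $N=2$ from $N\geq3$), but as you note this is harmless by Remark~\ref{remarkgenerator}.
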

\begin{proof}
If $\mathfrak{f}=N\mathcal{O}_K$, then
$g_\mathfrak{f}(C_0)=g_{(0,1/N)}(\theta)^{12N}$ by the definition.
We get that
\begin{eqnarray}
&&\left\{\begin{array}{ll}\mathbf{N}_{K_\mathfrak{f}/H_\mathcal{O}}(g_\mathfrak{f}(C_0))
&\textrm{if $N=2$}\vspace{0.1cm}\\
\mathbf{N}_{K_\mathfrak{f}/H_\mathcal{O}}(g_\mathfrak{f}(C_0))^2
&\textrm{if $N\geq3$}\end{array}\right.\nonumber\\
&=&\prod_{\begin{smallmatrix}1\leq t\leq
N-1\\\gcd(t,N)=1\end{smallmatrix}}
(g_{(0,1/N)}(\theta)^{12N})^{\left(\begin{smallmatrix}t&0\\0&t\end{smallmatrix}\right)}\quad\textrm{by
Proposition \ref{Gal(K_N/H_O)}}\nonumber\\
&=&\prod_{\begin{smallmatrix}1\leq t\leq
N-1\\\gcd(t,N)=1\end{smallmatrix}}
(g_{(0,1/N)}(\tau)^{12N})^{\left(\begin{smallmatrix}t&0\\0&t\end{smallmatrix}\right)}(\theta)\quad\textrm{by
Proposition \ref{reciprocity}}\nonumber\\
&=&\prod_{\begin{smallmatrix}1\leq t\leq
N-1\\\gcd(t,N)=1\end{smallmatrix}}g_{(0,t/N)}(\theta)^{12N}
\quad\textrm{by Proposition
\ref{TransformSiegel}(ii)}\nonumber\\
&=&\prod_{S\subseteq\{1,2,\cdots,n\}} \bigg(\prod_{1\leq t\leq
N-1,N_S|t}
g_{(0,t/N)}(\theta)^{12}\bigg)^{N(-1)^{\#S}}\quad\textrm{by
inclusion-exclusion principle}\nonumber\\
&=&\prod_{S\subseteq\{1,2,\cdots,n\}}
\bigg(\prod_{w=1}^{(N/N_S)-1}g_{(0,N_Sw/N)}(\theta)^{12}\bigg)^{N(-1)^{\#S}}
\quad\textrm{by setting $t=N_Sw$}\nonumber\\
&=&\prod_{S\subseteq\{1,2,\cdots,n\}}
((N/N_S)^{12}\Delta((N/N_S)\theta)/\Delta(\theta))^{N(-1)^{\#S}}\quad\textrm{by
Lemma \ref{StoD}},\label{step1}
\end{eqnarray}
which is a generator of $H_\mathcal{O}$ over $K$ by Theorem
\ref{generator} and Remark \ref{remarkgenerator}. On the other hand,
the value
$\mathbf{N}_{K_\mathfrak{f}/H_\mathcal{O}}(g_\mathfrak{f}(C_0))$ is
an algebraic integer by Propositions \ref{TransformSiegel}(iii) and
\ref{jlemma}(i). Furthermore, each factor
$\Delta((N/N_S)\theta)/\Delta(\theta)$ appeared in (\ref{step1})
belongs to the ring class field of the order of conductor $N/N_S$ in
$K$ as a real algebraic number by Proposition \ref{jlemma}(ii).
Therefore the value in (\ref{step1}) without $N^\textrm{th}$ power
generates $H_\mathcal{O}$ over $K$ as an algebraic integer. We
further observe that
\begin{eqnarray*}
&&\prod_{S\subseteq\{1,2,\cdots,n\}}
((N/N_S)^{12}\Delta((N/N_S)\theta)/\Delta(\theta))^{(-1)^{\#S}}\\
&=&(N^{12}/\Delta(\theta))^{\sum_{S\subseteq\{1,2,\cdots,n\}}
(-1)^{\#S}}
\prod_{S\subseteq\{1,2,\cdots,n\}}N_S^{-12(-1)^{\#S}}\prod_{S\subseteq\{1,2,\cdots,n\}}
\Delta((N/N_S)\theta)^{(-1)^{\#S}}\\
&=&\left\{\begin{array}{ll}(p_1^{12e_1}/\Delta(\theta))^{1-1}
p_1^{12}\Delta(p_1^{e_1}\theta)
\Delta(p_1^{e_1-1}\theta)^{-1} & \textrm{if $n=1$}\vspace{0.1cm}\\
(N^{12}/\Delta(\theta))^{\sum_{k=0}^n\left(\begin{smallmatrix}n\\k\end{smallmatrix}\right)(-1)^k}
\prod_{k=1}^n
p_k^{-12\sum_{\ell=1}^{n}\left(\begin{smallmatrix}n-1\\\ell-1\end{smallmatrix}\right)(-1)^\ell}
&\vspace{0.1cm}\\
\times\prod_{S\subseteq\{1,2,\cdots,n\}}\Delta((N/N_S)\theta)^{(-1)^{\#S}}
& \textrm{if $n\geq2$}\end{array}\right.\\
&=&\left\{\begin{array}{ll}
p_1^{12}\Delta(p_1^{e_1}\theta)/\Delta(p_1^{e_1-1}\theta) &
\textrm{if
$n=1$}\vspace{0.1cm}\\
\prod_{S\subseteq\{1,2,\cdots,n\}}\Delta((N/N_S)\theta)^{(-1)^{\#S}}
& \textrm{if $n\geq2$.}
\end{array}\right.
\end{eqnarray*}
This completes the proof.
\end{proof}

\begin{remark}\label{mainremark}
Let $\mathcal{O}$ be the order of conductor $N=\prod_{k=1}^n
p_k^{e_k}$ ($\geq2$) in an imaginary quadratic field $K$. Denote by
$r_k$ the ramification index of $p_k$ in $K/\mathbb{Q}$ for each
$k=1,\cdots,n$. Assume first that
\begin{equation}\label{newassumption1}
\textrm{each $p_k$ is an odd prime which is inert or ramified in
$K/\mathbb{Q}$.}
\end{equation}
Since $N\mathcal{O}_K=\prod_{k=1}^n
\mathfrak{p}_k^{r_ke_k}~\textrm{with}~\mathbf{N}_{K/\mathbb{Q}}(\mathfrak{p}_k)=p_k^{2/r_k}$,
we have
\begin{equation*}
\widehat{\varepsilon}_k=\left\{\begin{array}{ll}
(1/\omega_K)(p_1^{2/r_1}-1)p_1^{2e_1-2/r_1} & \textrm{if
$n=1$}\vspace{0.1cm}\\(p_k^{2/r_k}-1)p_k^{2e_k-2/r_k} & \textrm{if
$n\geq2$}
\end{array}\right.
,\quad \varepsilon_k=\frac{\prod_{\ell=1}^n(p_\ell^{2/r_\ell}-1)
p_\ell^{2e_\ell-2/r_\ell}} {(p_k^{2/r_k}-1)p_k^{2e_k-2/r_k}}
\quad(k=1,\cdots,n),
\end{equation*}
and $\#~\mathrm{Cl}(K_{(N)}/H_\mathcal{O})=(1/2)\prod_{k=1}^n
(p_k-1)p_k^{e_k-1}$ by Proposition \ref{Gal(K_N/H_O)}. Assume
further that
\begin{equation}\label{newassumption2}
e_k+1>2/r_k~(k=1,\cdots,n)~\textrm{and}~ \left\{\begin{array}{ll}
\gcd(p_1,\omega_K)=1
& \textrm{if $n=1$}\vspace{0.1cm}\\
\gcd(\prod_{k=1}^np_k,\prod_{k=1}^n(p_k^{2/r_k}-1))=1 & \textrm{if
$n\geq2$.}
\end{array}\right.
\end{equation}
Then, since
\begin{equation*}
p_k\nmid\varepsilon_k~\textrm{and}~\mathrm{ord}_{p_k}(\widehat{\varepsilon}_k)
=2e_k-2/r_k>
\mathrm{ord}_{p_k}(\#~\mathrm{Cl}(K_{(N)}/H_\mathcal{O}))=e_k-1~(k=1,\cdots,n),
\end{equation*}
we can take $\nu_k=p_k$ as for the assumption (\ref{hypothesis}) in
Lemma \ref{character}. Therefore one can apply Theorem \ref{main}
under the assumptions (\ref{newassumption1}) and
(\ref{newassumption2}).
\end{remark}

\begin{example}\label{exer1}
If $K=\mathbb{Q}(\sqrt{-7})$, then $\theta=(-1+\sqrt{-7})/2$ and
$h_K=1$ (\cite[Theorem 12.34]{Cox}), in other words, $H=K$. Let
$\mathcal{O}$ be the order of conductor $N=7$ in $K$. We get by
Propositions \ref{reciprocity} and \ref{Gal(K_N/H_O)} that
\begin{eqnarray*}
\mathrm{Gal}(H_\mathcal{O}/K)&\simeq&(W_{7,\theta}
/\{\pm\left(\begin{smallmatrix}1&0\\0&1\end{smallmatrix}\right)\}
)/(\{\left(\begin{smallmatrix}t&0\\0&t\end{smallmatrix}\right):t\in(\mathbb{Z}/7\mathbb{Z})^*\}/
\{\pm\left(\begin{smallmatrix}1&0\\0&1\end{smallmatrix}\right)\})\\
&=&\{ \left(\begin{smallmatrix}1&0\\0&1\end{smallmatrix}\right)\cdot
\left(\begin{smallmatrix}1&0\\0&1\end{smallmatrix}\right),
\left(\begin{smallmatrix}1&0\\0&2\end{smallmatrix}\right)\cdot
\left(\begin{smallmatrix}-1&-2\\4&7\end{smallmatrix}\right),
\left(\begin{smallmatrix}1&0\\0&2\end{smallmatrix}\right)\cdot
\left(\begin{smallmatrix}7&12\\-10&-17\end{smallmatrix}\right),
\left(\begin{smallmatrix}1&0\\0&4\end{smallmatrix}\right)\cdot
\left(\begin{smallmatrix}-13&-16\\9&11\end{smallmatrix}\right),\\
&&\phantom{\{}
\left(\begin{smallmatrix}1&0\\0&1\end{smallmatrix}\right)\cdot
\left(\begin{smallmatrix}-5&-16\\1&3\end{smallmatrix}\right),
\left(\begin{smallmatrix}1&0\\0&1\end{smallmatrix}\right)\cdot
\left(\begin{smallmatrix}-3&-16\\1&5\end{smallmatrix}\right),
\left(\begin{smallmatrix}1&0\\0&4\end{smallmatrix}\right)\cdot
\left(\begin{smallmatrix}-16&-9\\9&5\end{smallmatrix}\right)\}.
\end{eqnarray*}
Note that we expressed elements of $\mathrm{Gal}(H_\mathcal{O}/K)$
in the form of
\begin{equation*}
\begin{pmatrix}1&0\\0&d\end{pmatrix}~\textrm{for
some $d\in(\mathbb{Z}/7\mathbb{Z})^*$}\cdot\textrm{an element of
$\mathrm{SL}_2(\mathbb{Z})$}.
\end{equation*}
On the other hand, since $7$ is ramified in $K/\mathbb{Q}$ and
$\omega_K=2$, the assumptions (\ref{newassumption1}) and
(\ref{newassumption2}) in Remark \ref{mainremark} (or, the
assumption (\ref{newassumption3}) in Remark \ref{newremark}) are
satisfied. Hence the singular value
$7^{12}\Delta(7\theta)/\Delta(\theta)$ generates $H_\mathcal{O}$
over $K$ by Theorem \ref{main} (or, Theorem \ref{main2}).
Furthermore, since the function $\Delta(7\tau)/\Delta(\tau)$ belongs
to $\mathcal{F}_7$ and has rational Fourier coefficients by
Proposition \ref{functionj}(iii), we obtain its minimal polynomial
by Propositions \ref{Gal(F_N/F_1)} and \ref{reciprocity} as
\begin{eqnarray*}
&&\mathrm{min}(7^{12}\Delta(7\theta)/\Delta(\theta),K)\\&=&
(X-(7^{12}\Delta(7\tau)/\Delta(\tau))\circ\left(\begin{smallmatrix}1&0\\0&1\end{smallmatrix}\right)(\theta))
(X-(7^{12}\Delta(7\tau)/\Delta(\tau))\circ\left(\begin{smallmatrix}-1&-2\\4&7\end{smallmatrix}\right)(\theta))\\
&&(X-(7^{12}\Delta(7\tau)/\Delta(\tau))\circ\left(\begin{smallmatrix}7&12\\-10&-17\end{smallmatrix}\right)(\theta))
(X-(7^{12}\Delta(7\tau)/\Delta(\tau))\circ\left(\begin{smallmatrix}-13&-16\\9&11\end{smallmatrix}\right)(\theta))\\
&&(X-(7^{12}\Delta(7\tau)/\Delta(\tau))\circ\left(\begin{smallmatrix}-5&-16\\1&3\end{smallmatrix}\right)(\theta))
(X-(7^{12}\Delta(7\tau)/\Delta(\tau))\circ\left(\begin{smallmatrix}-3&-16\\1&5\end{smallmatrix}\right)(\theta))\\
&&(X-(7^{12}\Delta(7\tau)/\Delta(\tau))\circ\left(\begin{smallmatrix}-16&-9\\9&5\end{smallmatrix}\right)(\theta))\\
&=&X^7+234857X^6+24694815621X^5+295908620105035X^4+943957383096939785X^3\\
&&+356807315211847521X^2+38973886319454982X-117649.
\end{eqnarray*}
On the other hand, if we compare its coefficients with those of the
minimal polynomial of the classical invariant $j(7\theta)$, we see
in a similar fashion that the latter are much bigger than the former
as follows:
\begin{eqnarray*}
&&\mathrm{min}(j(7\theta),K)\\
&=&X^7
+18561099067532582351348250X^6+54379116263846797396254926859375X^5\\
&&+344514398594838596665876837347342843995647646484375X^4\\
&&+1009848457088842748174122781381460720529620832094970703125X^3\\
&&+1480797351289795967859364968037513969226011238564633514404296875
X^2\\
&&-3972653601649066484326573605251406741304015473521796878814697265625X\\
&&+4791576562341747034548276661270093305105027267573103845119476318359375.
\end{eqnarray*}
\end{example}

\begin{example}\label{exer2}
Let $K=\mathbb{Q}(\sqrt{-5})$ and $\mathcal{O}$ be the order of
conductor $N=6$ ($=2\cdot3$) in $K$. One can readily check that
$N\mathcal{O}_K$ satisfies neither the assumption (\ref{hypothesis})
in Lemma \ref{character} nor the assumption (\ref{newassumption3})
in Remark \ref{newremark}. Even in this case, however, we will see
that our method is still valid. Therefore, it is worth of studying
how much further one can release from the assumption
(\ref{hypothesis}) in Lemma \ref{character} (or, the assumption
(\ref{newassumption3}) in Remark \ref{newremark}).
\par
Observe that $h_K=2$ (\cite[p.29]{Cox}) and $[H_\mathcal{O}:K]=8$ by
Lemma \ref{degree}(i). Since $h_K=2$, there are two reduced positive
definite binary quadratic forms of discriminant $d_K=-20$, namely
\begin{equation*}
Q_1=X^2+5Y^2~\textrm{and}~Q_2=2X^2+2XY+3Y^2.
\end{equation*}
We associate to each $Q_k$ ($k=1,2$) a matrix in
$\mathrm{GL}_2(\mathbb{Z}/N\mathbb{Z})$ and a CM-point as follows:
\begin{eqnarray*}\left\{\begin{array}{ll}
\beta_1=\left(\begin{smallmatrix}1&0\\0&1\end{smallmatrix}\right),~
\theta_1=\sqrt{-5} & \textrm{for $Q_1$}\vspace{0.1cm}\\
\beta_2=\left(\begin{smallmatrix}1&5\\3&2\end{smallmatrix}\right),~
\theta_2=(-1+\sqrt{-5})/2 & \textrm{for $Q_2$}.
\end{array}\right.
\end{eqnarray*}
Then we see from \cite[$\S$6]{Stevenhagen} that
\begin{eqnarray*}
\mathrm{Gal}(H/K)=\{(h(\theta)\mapsto
h^{\beta_k}(\theta_k))\big|_H~:~k=1,2\},
\end{eqnarray*}
where $h\in\mathcal{F}_N$ is defined and finite at
$\theta=\sqrt{-5}$. Furthermore, it follows from Propositions
\ref{reciprocity} and \ref{Gal(K_N/H_O)} that
\begin{equation*}
\mathrm{Gal}(H_\mathcal{O}/H)\simeq\{
\alpha_1=\left(\begin{smallmatrix}1&0\\0&1\end{smallmatrix}\right),
\alpha_2=\left(\begin{smallmatrix}0&1\\1&0\end{smallmatrix}\right),
\alpha_3=\left(\begin{smallmatrix}2&3\\3&2\end{smallmatrix}\right),
\alpha_4=\left(\begin{smallmatrix}3&2\\2&3\end{smallmatrix}\right)\}.
\end{equation*}
Hence we achieve that
\begin{equation*}
\mathrm{Gal}(H_\mathcal{O}/K)=\{(h(\theta)\mapsto
h^{\alpha_\ell\beta_k}(\theta_k))|_{H_\mathcal{O}}~:~\ell=1,\cdots,4,~k=1,2\},
\end{equation*}
where $h\in\mathcal{F}_N$ is defined and finite at $\theta$. The
conjugates of
$\Delta(6\theta)\Delta(\theta)/\Delta(2\theta)\Delta(3\theta)$
estimated according to Theorem \ref{main} are
\begin{equation*}
x_{\ell,k}=(\Delta(6\tau)\Delta(\tau)/
\Delta(2\tau)\Delta(3\tau))^{\alpha_\ell\beta_k}(\theta_k)\quad(\ell=1,\cdots,4,~k=1,2)
\end{equation*}
possibly with some multiplicity. And, since the function
$\Delta(6\tau)\Delta(\tau)/\Delta(2\tau)\Delta(3\tau)\in\mathcal{F}_N$
has rational Fourier coefficients, the action of each
$\alpha_\ell\beta_k$ on it can be performed as in the previous
example. Thus the minimal polynomial of
$\Delta(6\theta)\Delta(\theta)/\Delta(2\theta)\Delta(3\theta)$
becomes a divisor of
\begin{eqnarray*}
\textstyle{\prod_{\ell=1,\cdots,4,k=1,2}}(X-x_{\ell,k})
&=&X^8-1304008X^7+16670918428X^6+30056736254344X^5\\
&&+23344024601638470X^4
+7327603919934344X^3\\
&&+1949665164230428X^2-1597207512008X+1.
\end{eqnarray*}
This polynomial is, however, irreducible and hence the singular
value $\Delta(6\theta)\Delta(\theta)/\Delta(2\theta)\Delta(3\theta)$
should be a primitive generator of $H_\mathcal{O}$ over $K$.
\end{example}

\section{Another approach}\label{section5}

We shall develop a different lemma which substitutes for Lemma
\ref{character}, from which we are able to find more $N$'s in
Theorem \ref{main}.
\par
Throughout this section, we let $K$ be an imaginary quadratic field
other than $\mathbb{Q}(\sqrt{-1})$, $\mathbb{Q}(\sqrt{-3})$, and
$\theta$ be as in (\ref{theta}). Let $\mathcal{O}$ be the order of
conductor $N$ ($\geq2$) in $K$ with
\begin{equation*}
\mathfrak{f}=N\mathcal{O}_K=\prod_{k=1}^n\mathfrak{p}_k^{e_k}.
\end{equation*}
We use the same notations $\pi_\mathfrak{f}$, $\iota$, $\iota_k$,
$v_k$, $\widetilde{\Phi}_\mathfrak{f}$ as in $\S$\ref{section2}.
And, by $\mathrm{Cl}(H_\mathcal{O}/K )$ we mean the quotient group
of $\mathrm{Cl}(\mathfrak{f})$ corresponding to
$\mathrm{Gal}(H_\mathcal{O}/K)$ via the Artin map, that is
\begin{equation}\label{Cl(H_O/K)}
\mathrm{Cl}(H_\mathcal{O}/K)=\mathrm{Cl}(\mathfrak{f})/\mathrm{Cl}(K_\mathfrak{f}/H_\mathcal{O}).
\end{equation}
We further let $\mathrm{Cl}(H_\mathcal{O}/H)$ stand for the subgroup
of $\mathrm{Cl}(H_\mathcal{O}/K)$ corresponding to
$\mathrm{Gal}(H_\mathcal{O}/H)$.
\par
Setting
\begin{equation}\label{Psi}
\check\Psi_\mathfrak{f}=(\mathrm{Cl}(\mathfrak{f})\rightarrow\mathrm{Cl}(H_\mathcal{O}/K))\circ\widetilde{\Phi}_\mathfrak{f}
~:~\pi_\mathfrak{f}(\mathcal{O}_K)^*\longrightarrow\mathrm{Cl}(H_\mathcal{O}/K),
\end{equation}
we obtain from the exact sequence (\ref{exact}) and Galois theory
another exact sequence
\begin{equation}\label{exact2}
1\longrightarrow
\pi_\mathfrak{f}(\mathcal{O}_K)^*/\mathrm{Ker}(\check{\Psi}_\mathfrak{f})
\longrightarrow
\mathrm{Cl}(H_\mathcal{O}/K)\longrightarrow\mathrm{Cl}(\mathcal{O}_K)
\longrightarrow1
\end{equation}
with
\begin{equation}\label{imageCl}
\check{\Psi}_\mathfrak{f}(\pi_\mathfrak{f}(\mathcal{O}_K)^*)=\mathrm{Cl}(H_\mathcal{O}/H).
\end{equation}
We know by the fact $\omega_K=2$ and Lemma \ref{degree} that
\begin{eqnarray*}
\#\pi_\mathfrak{f}(\mathcal{O}_K)^*/\pi_\mathfrak{f}(\mathbb{Z})^*=
\varphi(\mathfrak{f})/\phi(N)
~\textrm{and}~[H_\mathcal{O}:H]=\varphi(\mathfrak{f})/\phi(N).
\end{eqnarray*}
On the other hand, since
$\mathrm{Cl}(K_\mathfrak{f}/H_\mathcal{O})=P_{K,\mathbb{Z}}(\mathfrak{f})/P_{K,1}(\mathfrak{f})$
by the definition of $H_\mathcal{O}$, we get
$\pi_\mathfrak{f}(\mathbb{Z})^*\subseteq\mathrm{Ker}(\check{\Psi}_\mathfrak{f})$;
hence we achieve
\begin{equation}\label{kernelPsi}
\mathrm{Ker}(\check{\Psi}_\mathfrak{f})=\pi_\mathfrak{f}(\mathbb{Z})^*.
\end{equation}

\begin{lemma}\label{quotientextend}
Let $G$ be a finite abelian group and $H$ be a subgroup of $G$.
There is a canonical isomorphism between character groups
\begin{equation}\label{charactermap}
\begin{array}{ccc}
\{\textrm{characters of}~G~\textrm{which are trivial
on}~H\}&\longrightarrow&\{\textrm{characters
of}~G/H\}\vspace{0.1cm}\\
\chi&\mapsto&(gH\mapsto\chi(g)~:~g\in G).
\end{array}
\end{equation}
\end{lemma}
\begin{proof}
One can readily check that the map in (\ref{charactermap}) is a
well-defined injection. For surjectivity, let $\psi$ be a character
of $G/H$. Then the character
\begin{equation*}
\chi=\psi\circ(G\rightarrow G/H)
\end{equation*}
of $G$ maps to $\psi$ via the map in (\ref{charactermap}), which
claims the surjectivity.
\end{proof}

Thus we have a canonical isomorphism
\begin{eqnarray}\label{canonical}
\{\textrm{characters of}~\mathrm{Cl}(\mathfrak{f})~\textrm{which are
trivial
on}~\mathrm{Cl}(K_\mathfrak{f}/H_\mathcal{O})\}\longrightarrow\{\textrm{characters
of}~\mathrm{Cl}(H_\mathcal{O}/K)\}
\end{eqnarray}
by Lemma \ref{quotientextend} and definition (\ref{Cl(H_O/K)}). For
any character $\psi$ of $\mathrm{Cl}(H_\mathcal{O}/K)$ we define
\begin{equation*}
\check{\psi}=\psi\circ\check{\Psi}_\mathfrak{f}~\textrm{and}~
\check{\psi}_k=\check{\psi}\circ\iota\circ\iota_k~(k=1,\cdots,n).
\end{equation*}
If $\chi$ maps to $\psi$ via the map in (\ref{canonical}), then we
derive
\begin{eqnarray*}
\widetilde{\chi}&=&\chi\circ\widetilde{\Phi}_\mathfrak{f}=
\psi\circ(\mathrm{Cl}(\mathfrak{f})\rightarrow\mathrm{Cl}(H_\mathcal{O}/K))
\circ\widetilde{\Phi}_\mathfrak{f}\quad\textrm{by the proof of Lemma
\ref{quotientextend}}\\
&=&\psi\circ\check{\Psi}_\mathfrak{f}=\check{\psi}\quad\textrm{by
definition (\ref{Psi})},
\end{eqnarray*}
from which it follows that
\begin{equation*}
\widetilde{\chi}_k=\check{\psi}_k\quad(k=1,\cdots,n).
\end{equation*}

\begin{lemma}\label{character2}
Let
\begin{eqnarray*}
U&=&\{\textrm{characters
of}~\mathrm{Cl}(H_\mathcal{O}/K)~\textrm{which are trivial on}~{\mathrm{Cl}(H_\mathcal{O}/H)}\},\\
V&=&\{\textrm{characters
of}~\mathrm{Cl}(H_\mathcal{O}/H)\},\\
W&=&\{\textrm{characters
of}~\mathrm{Cl}(H_\mathcal{O}/K)\},\\
G_k&=&\widehat{v}_k
\circ\iota^{-1}(\pi_\mathfrak{f}(\mathbb{Z})^*)\quad(k=1,\cdots,n),
\end{eqnarray*}
where
\begin{equation*}
\widehat{v}_k~:~
\prod_{\ell=1}^n\pi_{\mathfrak{p}_\ell^{e_\ell}}(\mathcal{O}_K)^*
\longrightarrow \pi_{\mathfrak{p}_1^{e_1}}(\mathcal{O}_K)^*\times
\cdots\times\pi_{\mathfrak{p}_{k-1}^{e_{k-1}}}(\mathcal{O}_K)^*\times
\pi_{\mathfrak{p}_{k+1}^{e_{k+1}}}(\mathcal{O}_K)^*\times
\cdots\times\pi_{\mathfrak{p}_{n}^{e_{n}}}(\mathcal{O}_K)^*
\end{equation*}
is the natural projection which deletes the $k$-th component.
 For each character $\psi\in V$,
fix a character $\psi'\in W$ which extends $\psi$ \textup{(}by Lemma
\textup{\ref{characterextend})}.
\begin{itemize}
\item[(i)]
There is a bijective map
\begin{eqnarray*}
U\times V&\longrightarrow&W\\
(\chi,\psi)&\mapsto&\chi\cdot\psi'.\nonumber
\end{eqnarray*}
\item[(ii)] We have the inequality
\begin{equation*}
\#\{\xi\in W~:~\check{\xi}_k=1\} \leq
h_K\frac{\#\pi_\mathfrak{f}(\mathcal{O}_K)^*}
{\#\pi_{\mathfrak{p}_k^{e_k}}(\mathcal{O}_K)^*\cdot\#G_k}
\quad(k=1,\cdots,n).
\end{equation*}
\end{itemize}
\end{lemma}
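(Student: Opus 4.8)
The plan is to treat the two parts in sequence, using the exact sequences (\ref{exact2}) and (\ref{kernelPsi}) together with elementary character theory of finite abelian groups. For part (i), I would first recall the canonical identification of $U$ with the character group of $\mathrm{Cl}(H_\mathcal{O}/K)/\mathrm{Cl}(H_\mathcal{O}/H)$ coming from Lemma \ref{quotientextend}, and of $V$ with the full character group of $\mathrm{Cl}(H_\mathcal{O}/H)$. The map $(\chi,\psi)\mapsto\chi\cdot\psi'$ lands in $W$ because both factors do. To see it is well-defined one checks that its image is independent of the choice of extension $\psi'$ once we quotient appropriately — but here $\psi'$ is \emph{fixed} for each $\psi$, so well-definedness is automatic. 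For injectivity: if $\chi_1\psi_1'=\chi_2\psi_2'$, restrict both sides to $\mathrm{Cl}(H_\mathcal{O}/H)$; since $\chi_i|_{\mathrm{Cl}(H_\mathcal{O}/H)}=1$ this forces $\psi_1'|_{\mathrm{Cl}(H_\mathcal{O}/H)}=\psi_2'|_{\mathrm{Cl}(H_\mathcal{O}/H)}$, i.e. $\psi_1=\psi_2$, whence $\psi_1'=\psi_2'$ and then $\chi_1=\chi_2$. Surjectivity then follows by a counting argument: $\#U\cdot\#V=[\mathrm{Cl}(H_\mathcal{O}/K):\mathrm{Cl}(H_\mathcal{O}/H)]\cdot\#\mathrm{Cl}(H_\mathcal{O}/H)=\#\mathrm{Cl}(H_\mathcal{O}/K)=\#W$, and an injection between finite sets of equal cardinality is a bijection. (Alternatively surjectivity can be seen directly: given $\xi\in W$, set $\psi=\xi|_{\mathrm{Cl}(H_\mathcal{O}/H)}$ and $\chi=\xi\cdot(\psi')^{-1}$, which is trivial on $\mathrm{Cl}(H_\mathcal{O}/H)$.)

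For part (ii), the strategy is to translate the condition $\check\xi_k=1$ into a statement about the character $\xi$ of $\mathrm{Cl}(H_\mathcal{O}/K)$ and then count. Recall $\check\xi=\xi\circ\check\Psi_\mathfrak{f}$ and $\check\xi_k=\check\xi\circ\iota\circ\iota_k$. The condition $\check\xi_k=1$ says that $\check\xi$ is trivial on the subgroup $\iota(\iota_k(\pi_{\mathfrak{p}_k^{e_k}}(\mathcal O_K)^*))$ of $\pi_\mathfrak{f}(\mathcal O_K)^*$. Now $\check\xi$ factors through $\pi_\mathfrak{f}(\mathcal O_K)^*/\mathrm{Ker}(\check\Psi_\mathfrak{f})=\pi_\mathfrak{f}(\mathcal O_K)^*/\pi_\mathfrak{f}(\mathbb Z)^*$ by (\ref{kernelPsi}), which via (\ref{imageCl}) is identified with $\mathrm{Cl}(H_\mathcal{O}/H)$. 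So the set $\{\xi\in W:\check\xi_k=1\}$ injects (forgetting the extension datum over $\mathrm{Cl}(\mathcal O_K)$, which contributes a factor at most $h_K=\#\mathrm{Cl}(\mathcal O_K)$ via the exact sequence (\ref{exact2})) into the set of characters of $\pi_\mathfrak{f}(\mathcal O_K)^*/\pi_\mathfrak{f}(\mathbb Z)^*$ that are trivial on the image of $\iota(\iota_k(\pi_{\mathfrak{p}_k^{e_k}}(\mathcal O_K)^*))$ in that quotient. The number of such characters equals the index of that image, i.e.
\begin{equation*}
\frac{\#\bigl(\pi_\mathfrak{f}(\mathcal O_K)^*/\pi_\mathfrak{f}(\mathbb Z)^*\bigr)}
{\#\,\mathrm{image\ of}\ \iota(\iota_k(\pi_{\mathfrak{p}_k^{e_k}}(\mathcal O_K)^*))}.
\end{equation*}

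The key computation is to identify the denominator. Using the Chinese-remainder isomorphism $\iota$, the subgroup $\iota_k(\pi_{\mathfrak{p}_k^{e_k}}(\mathcal O_K)^*)$ is exactly the $k$-th factor of $\prod_\ell\pi_{\mathfrak{p}_\ell^{e_\ell}}(\mathcal O_K)^*$, and its image in the quotient by $\iota^{-1}(\pi_\mathfrak{f}(\mathbb Z)^*)$ has order $\#\pi_{\mathfrak{p}_k^{e_k}}(\mathcal O_K)^*$ divided by the order of the intersection of the $k$-th factor with $\iota^{-1}(\pi_\mathfrak{f}(\mathbb Z)^*)$; that intersection is precisely the kernel of $\widehat v_k$ restricted to $\iota^{-1}(\pi_\mathfrak{f}(\mathbb Z)^*)$, whose image under $\widehat v_k$ is $G_k$ by definition. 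Hence the image of the $k$-th factor in the quotient has order $\#\pi_{\mathfrak{p}_k^{e_k}}(\mathcal O_K)^*\cdot\#G_k/\#\pi_\mathfrak{f}(\mathbb Z)^*$, which when divided into $\#\pi_\mathfrak{f}(\mathcal O_K)^*/\#\pi_\mathfrak{f}(\mathbb Z)^*$ gives exactly $\#\pi_\mathfrak{f}(\mathcal O_K)^*/(\#\pi_{\mathfrak{p}_k^{e_k}}(\mathcal O_K)^*\cdot\#G_k)$; multiplying by the $h_K$ coming from the base change over $\mathrm{Cl}(\mathcal O_K)$ yields the claimed inequality. I expect the bookkeeping in this last step — correctly tracking the subgroup $\pi_\mathfrak{f}(\mathbb Z)^*$ through the Chinese-remainder isomorphism and pinning down the order of the relevant image as $\#\pi_{\mathfrak{p}_k^{e_k}}(\mathcal O_K)^*\cdot\#G_k/\#\pi_\mathfrak{f}(\mathbb Z)^*$ — to be the main obstacle; the factor $h_K$ is the reason one only gets an inequality rather than an equality, since not every character of $\mathrm{Cl}(H_\mathcal{O}/H)$ with the vanishing property need extend to $W$ in a way compatible with $\check\Psi_\mathfrak{f}$, but crudely bounding the number of extensions by $h_K$ suffices.
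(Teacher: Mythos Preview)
Your proposal is correct and follows essentially the same route as the paper. For (i) both argue injectivity plus a cardinality count; for (ii) the paper packages the same reasoning as an explicit injection $\{\xi\in W:\check\xi_k=1\}\hookrightarrow U\times\{\text{characters of }\prod_{\ell\neq k}\pi_{\mathfrak{p}_\ell^{e_\ell}}(\mathcal{O}_K)^*/G_k\}$, but the underlying computation---identifying the intersection of the $k$-th factor with $\iota^{-1}(\pi_\mathfrak{f}(\mathbb Z)^*)$ as the kernel of $\widehat v_k$ restricted there, hence of order $\#\pi_\mathfrak{f}(\mathbb Z)^*/\#G_k$---is exactly your index calculation (and in fact yields an equality, not merely an inequality; the fibers of $\xi\mapsto\check\xi$ have size exactly $h_K$).
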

\begin{proof}
(i) We see from Lemma \ref{quotientextend} that both $U\times V$ and
$W$ have the same size. Hence it suffices to show that the above map
is injective, which is straightforward.\\
(ii) Without loss of generality it suffices to show that there is an
injective map
\begin{equation*}
S=\{\xi\in W~:~\check{\xi}_n=1\}\longrightarrow
U\times\{\textrm{characters
of}~\prod_{k=1}^{n-1}\pi_{\mathfrak{p}_k^{e_k}}(\mathcal{O}_K)^*/G_n\},
\end{equation*}
because $\#U=h_K$ by Lemma \ref{quotientextend} and
$\#\prod_{k=1}^{n-1}\pi_{\mathfrak{p}_k^{e_k}}(\mathcal{O}_K)^*/G_n=
\#\pi_\mathfrak{f}(\mathcal{O}_K)^*/(\#\pi_{\mathfrak{p}_n^{e_n}}(\mathcal{O}_K)^*\cdot\#
G_n)$.
\par
Let $\xi\in S$. As an element of $W$, $\xi$ is of the form
$\chi\cdot\psi'$ for some $\chi\in U$ and $\psi\in V$ by (i). And,
by (\ref{imageCl}) and the fact $\chi\in U$ we get
$\check{\chi}=\chi\circ\check{\Psi}_\mathfrak{f}=1$, from which it
follows that
\begin{equation}\label{n-th}
1=\check{\xi}_n=(\check{\chi}\cdot\check{\psi'})_n=\check{\psi'}_n.
\end{equation}
We further deduce by (\ref{imageCl}) that
\begin{equation}\label{equal}
\check{\psi'}=\psi'|_{\check{\Psi}_\mathfrak{f}(\pi_\mathfrak{f}(\mathcal{O}_K)^*)}\circ\check{\Psi}_\mathfrak{f}=
\psi'|_{\mathrm{Cl}(H_\mathcal{O}/H)}\circ\check{\Psi}_\mathfrak{f}=\psi\circ\check{\Psi}_\mathfrak{f}.
\end{equation}
\par
On the other hand, if $\beta$ is a character of
$\prod_{k=1}^{n}\pi_{\mathfrak{p}_k^{e_k}}(\mathcal{O}_K)^*$ defined
by
\begin{equation}\label{beta}
\beta=\psi\circ\check{\Psi}_\mathfrak{f}\circ\iota,
\end{equation}
then we derive that
\begin{eqnarray*}
\beta\circ\iota_n(\pi_{\mathfrak{p}_n^{e_n}}(\mathcal{O}_K)^*)
&=&\psi\circ\check{\Psi}_\mathfrak{f}\circ\iota\circ\iota_n(\pi_{\mathfrak{p}_n^{e_n}}(\mathcal{O}_K)^*)\\
&=&\check{\psi}'\circ\iota\circ\iota_n(\pi_{\mathfrak{p}_n^{e_n}}(\mathcal{O}_K)^*)\quad\textrm{by (\ref{equal})}\\
&=&\check{\psi'}_n(\pi_{\mathfrak{p}_n^{e_n}}(\mathcal{O}_K)^*)=1\quad\textrm{by
(\ref{n-th})},
\end{eqnarray*}
which implies
\begin{equation}\label{kernelinclusion1}
\iota_n(\pi_{\mathfrak{p}_n^{e_n}}(\mathcal{O}_K)^*)\subseteq\mathrm{Ker}(\beta).
\end{equation}
Furthermore, we have
$\beta\circ\iota^{-1}(\pi_\mathfrak{f}(\mathbb{Z})^*)=\psi
\circ\check{\Psi}_\mathfrak{f}(\pi_\mathfrak{f}(\mathbb{Z})^*)=1$ by
(\ref{kernelPsi}), which claims
\begin{equation}\label{kernelinclusion2}
\iota^{-1}(\pi_\mathfrak{f}(\mathbb{Z})^*)\subseteq\mathrm{Ker}(\beta).
\end{equation}
Hence $\beta$ can be written as
\begin{equation}\label{betagamma}
\beta=\gamma\circ(
\prod_{k=1}^{n}\pi_{\mathfrak{p}_k^{e_k}}(\mathcal{O}_K)^*\rightarrow
\prod_{k=1}^{n}\pi_{\mathfrak{p}_k^{e_k}}(\mathcal{O}_K)^*/ \langle
\iota_n(\pi_{\mathfrak{p}_n^{e_n}}(\mathcal{O}_K)^*),\iota^{-1}(\pi_\mathfrak{f}(\mathbb{Z})^*)
\rangle )
\end{equation}
for a unique character $\gamma$ of
$\prod_{k=1}^{n}\pi_{\mathfrak{p}_k^{e_k}}(\mathcal{O}_K)^*/\langle
\iota_n(\pi_{\mathfrak{p}_n^{e_n}}(\mathcal{O}_K)^*),\iota^{-1}(\pi_\mathfrak{f}(\mathbb{Z})^*)
\rangle$ by Lemma \ref{quotientextend}, (\ref{kernelinclusion1}) and
(\ref{kernelinclusion2}).
\par
Now, we define a map
\begin{eqnarray*}
\kappa~:~S&\longrightarrow&U\times \{\textrm{characters
of}~\prod_{k=1}^{n-1}\pi_{\mathfrak{p}_k^{e_k}}(\mathcal{O}_K)^*/G_n\}\\
\xi&\mapsto&(\chi,\gamma\circ\widehat{\iota}_n),
\end{eqnarray*}
where
\begin{equation*}
\widehat{\iota}_n~:~\prod_{k=1}^{n-1}\pi_{\mathfrak{p}_k^{e_k}}(\mathcal{O}_K)^*/G_n
\longrightarrow\prod_{k=1}^{n}\pi_{\mathfrak{p}_k^{e_k}}(\mathcal{O}_K)^*/
\langle
\iota_n(\pi_{\mathfrak{p}_n^{e_n}}(\mathcal{O}_K)^*),\iota^{-1}(\pi_\mathfrak{f}(\mathbb{Z})^*)
\rangle
\end{equation*}
is definitely a surjection by the definition of $G_n$. To prove the
injectivity of the map $\kappa$, assume that
$\kappa(\xi_1)=\kappa(\xi_2)$ for some $\xi_1,\xi_2\in S$. Then, by
(i) there are unique $\chi_1,\chi_2\in U$ and $\psi_1,\psi_2\in V$
such that $\xi_1=\chi_1\cdot\psi_1'$ and $\xi_2=\chi_2\cdot\psi_2'$.
And, by the definition of $\kappa$ we get $\chi_1=\chi_2$. Let
$\psi_\ell$ ($\ell=1,2$) induce $\beta_\ell$ and $\gamma_\ell$ in
the above paragraph (which explains $\beta$ and $\gamma$ constructed
from $\psi$). Then, since $\widehat{\iota}_n$ is surjective, we
obtain $\gamma_1=\gamma_2$ from the fact
$\gamma_1\circ\widehat{\iota}_n=\gamma_2\circ\widehat{\iota}_n$, and
so we have $\beta_1=\beta_2$ by (\ref{betagamma}). It then follows
from the definition (\ref{beta}), the fact $\psi_1,\psi_2\in V$ and
(\ref{imageCl}) that $\psi_1=\psi_2$, which concludes the
injectivity of $\kappa$. This completes the proof.
\end{proof}

\begin{lemma}\label{character3}
Let $F$ be a field such that $K\subseteq F\subsetneq H_\mathcal{O}$.
If
\begin{equation}\label{assumption}
2\#\pi_\mathfrak{f}(\mathbb{Z})^*\sum_{k=1}^n\frac{1}{\#\pi_{\mathfrak{p}_k^{e_k}}(\mathcal{O}_K)^*\cdot\#G_k}<1,
\end{equation}
then there is a character $\chi$ of $\mathrm{Cl}(\mathfrak{f})$ such
that
\begin{equation}\label{condition}
\chi|_{\mathrm{Cl}(K_\mathfrak{f}/H_\mathcal{O})}=1,~
\chi|_{\mathrm{Cl}(K_\mathfrak{f}/F)}\neq1~\textrm{and}~
\mathfrak{p}_k|\mathfrak{f}_\chi\quad(k=1,\cdots,n).
\end{equation}
\end{lemma}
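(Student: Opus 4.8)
The plan is to convert the statement into a counting estimate over the character group of $\mathrm{Cl}(H_\mathcal{O}/K)$. First I would use the canonical isomorphism (\ref{canonical}): a character $\chi$ of $\mathrm{Cl}(\mathfrak{f})$ with $\chi|_{\mathrm{Cl}(K_\mathfrak{f}/H_\mathcal{O})}=1$ is the same datum as a character $\psi$ of $\mathrm{Cl}(H_\mathcal{O}/K)$, and under this correspondence $\widetilde{\chi}_k=\check{\psi}_k$ for $k=1,\dots,n$, as recorded just before Lemma \ref{character2}. Since $K\subseteq F\subseteq H_\mathcal{O}$, the image of $\mathrm{Cl}(K_\mathfrak{f}/F)$ under $\mathrm{Cl}(\mathfrak{f})\rightarrow\mathrm{Cl}(\mathfrak{f})/\mathrm{Cl}(K_\mathfrak{f}/H_\mathcal{O})=\mathrm{Cl}(H_\mathcal{O}/K)$ is, by Galois theory, exactly the subgroup of $\mathrm{Cl}(H_\mathcal{O}/K)$ corresponding to $\mathrm{Gal}(H_\mathcal{O}/F)$, call it $\mathrm{Cl}(H_\mathcal{O}/F)$; hence $\chi|_{\mathrm{Cl}(K_\mathfrak{f}/F)}\neq1$ is equivalent to $\psi|_{\mathrm{Cl}(H_\mathcal{O}/F)}\neq1$. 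Finally, by Lemma \ref{conductor}(iii), if $\widetilde{\chi}_k\neq1$ for every $k$ then $\mathfrak{p}_k\mid\mathfrak{f}_\chi$ for every $k$. So it suffices to exhibit a character $\psi$ in the group $W$ of characters of $\mathrm{Cl}(H_\mathcal{O}/K)$ (the notation of Lemma \ref{character2}) with $\psi|_{\mathrm{Cl}(H_\mathcal{O}/F)}\neq1$ and $\check{\psi}_k\neq1$ for every $k=1,\dots,n$.

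Next I would count. We have $\#W=[H_\mathcal{O}:K]$, and by the exact sequence (\ref{exact2}) together with (\ref{imageCl}), (\ref{kernelPsi}) and $\omega_K=2$ this equals $h_K\cdot\#\pi_\mathfrak{f}(\mathcal{O}_K)^*/\#\pi_\mathfrak{f}(\mathbb{Z})^*$. The characters in $W$ trivial on $\mathrm{Cl}(H_\mathcal{O}/F)$ are precisely the characters of $\mathrm{Cl}(H_\mathcal{O}/K)/\mathrm{Cl}(H_\mathcal{O}/F)\cong\mathrm{Gal}(F/K)$, so there are $[F:K]$ of them; because $F\subsetneq H_\mathcal{O}$ we have $[H_\mathcal{O}:F]\geq2$, whence at most $\tfrac12\#W$ of them. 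For each fixed $k$, Lemma \ref{character2}(ii) bounds the number of $\xi\in W$ with $\check{\xi}_k=1$ by $h_K\,\#\pi_\mathfrak{f}(\mathcal{O}_K)^*/(\#\pi_{\mathfrak{p}_k^{e_k}}(\mathcal{O}_K)^*\cdot\#G_k)$.

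Summing these bounds, the number of $\psi\in W$ that fail at least one of the three required conditions is at most
\[
\frac12\#W+h_K\,\#\pi_\mathfrak{f}(\mathcal{O}_K)^*\sum_{k=1}^n\frac{1}{\#\pi_{\mathfrak{p}_k^{e_k}}(\mathcal{O}_K)^*\cdot\#G_k}.
\]
Multiplying the hypothesis (\ref{assumption}) through by $h_K\,\#\pi_\mathfrak{f}(\mathcal{O}_K)^*/(2\#\pi_\mathfrak{f}(\mathbb{Z})^*)$ gives exactly $h_K\,\#\pi_\mathfrak{f}(\mathcal{O}_K)^*\sum_k 1/(\#\pi_{\mathfrak{p}_k^{e_k}}(\mathcal{O}_K)^*\cdot\#G_k)<\tfrac12\#W$, so the displayed quantity is strictly less than $\#W$. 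Therefore some $\psi\in W$ satisfies all three conditions, and pulling it back through (\ref{canonical}) produces a character $\chi$ of $\mathrm{Cl}(\mathfrak{f})$ with the properties in (\ref{condition}).

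The substantive input is Lemma \ref{character2}(ii), which is already available, so the remaining work is essentially bookkeeping. The points that need care are the two translation steps in the first paragraph — identifying the image of $\mathrm{Cl}(K_\mathfrak{f}/F)$ in $\mathrm{Cl}(H_\mathcal{O}/K)$ with $\mathrm{Cl}(H_\mathcal{O}/F)$, and the appeal to Lemma \ref{conductor}(iii) — together with checking that the factor $2$ in (\ref{assumption}) is precisely what is needed to absorb the loss $[F:K]\leq\tfrac12\#W$ coming from $F\subsetneq H_\mathcal{O}$. I do not anticipate any further obstacle.
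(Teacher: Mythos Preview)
Your proposal is correct and follows essentially the same approach as the paper: both arguments translate to characters of $\mathrm{Cl}(H_\mathcal{O}/K)$ via (\ref{canonical}), bound the characters failing $\mathfrak{p}_k\mid\mathfrak{f}_\chi$ using Lemma~\ref{character2}(ii), and use $F\subsetneq H_\mathcal{O}$ to get the factor $\tfrac12$ that matches the $2$ in (\ref{assumption}). The only cosmetic difference is that the paper phrases the count as ``$\#\{\chi:\text{conditions 1, 2}\}>\#\{\chi:\text{condition 1, not 3}\}$'' rather than your ``bad characters $<\#W$'', which is the same inequality rearranged.
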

\begin{proof}
We first derive that
\begin{eqnarray*}
&&\#\{\textrm{characters}~\chi~\textrm{of}~\mathrm{Cl}(\mathfrak{f})~:~
\chi|_{\mathrm{Cl}(K_\mathfrak{f}/H_\mathcal{O})}=1,~
\chi|_{\mathrm{Cl}(K_\mathfrak{f}/F)}\neq1\}\\
&=&\#\{\chi~\textrm{of}~\mathrm{Cl}(\mathfrak{f})~:~
\chi|_{\mathrm{Cl}(K_\mathfrak{f}/H_\mathcal{O})}=1\}
-\#\{\chi~\textrm{of}~\mathrm{Cl}(\mathfrak{f})~:~
\chi|_{\mathrm{Cl}(K_\mathfrak{f}/F)}=1\}\\
&=&\#\mathrm{Cl}(\mathfrak{f})/\mathrm{Cl}(K_\mathfrak{f}/H_\mathcal{O})-
\#\mathrm{Cl}(\mathfrak{f})/\mathrm{Cl}(K_\mathfrak{f}/F)
\quad\textrm{by Lemma \ref{quotientextend}}\\
&=&[H_\mathcal{O}:K]-[F:K]\\
&=&[H_\mathcal{O}:K](1-1/[H_\mathcal{O}:F])\\
&\geq&(1/2)[H_\mathcal{O}:K]\quad\textrm{by
the fact $F\subsetneq H_\mathcal{O}$}\\
&=&(h_K/2)\#\pi_\mathfrak{f}(\mathcal{O}_K)^*/\pi_\mathfrak{f}(\mathbb{Z})^*
\quad\textrm{from the exact sequence (\ref{exact2}) and (\ref{kernelPsi})}\\
&>&h_K\#\pi_\mathfrak{f}(\mathcal{O}_K)^*\sum_{k=1}^n\frac{1}
{\#\pi_{\mathfrak{p}_k^{e_k}}(\mathcal{O}_K)^*\cdot\#G_k}\quad\textrm{by
the assumption (\ref{assumption})}.
\end{eqnarray*}
On the other hand, we find that
\begin{eqnarray*}
&&\#\{\chi~\textrm{of}~\mathrm{Cl}(\mathfrak{f})~:~\chi|_{\mathrm{Cl}(K_\mathfrak{f}/H_\mathcal{O})}=1,~
\mathfrak{p}_k\nmid\mathfrak{f}_\chi~\textrm{for some}~k\}\\
&\leq&\#\{\chi~\textrm{of}~\mathrm{Cl}(\mathfrak{f})~:~\chi|_{\mathrm{Cl}(K_\mathfrak{f}/H_\mathcal{O})}=1,~
\widetilde{\chi}_k=1~\textrm{for some}~k\}\quad\textrm{by Lemma \ref{conductor}}\\
&=&\#\{\xi~\textrm{of}~\mathrm{Cl}(H_\mathcal{O}/K)~:~\check{\xi}_k=1~\textrm{for
some}~k\}
\quad\textrm{by the argument followed by Lemma \ref{quotientextend}}\\
&\leq&h_K\#\pi_\mathfrak{f}(\mathcal{O}_K)^*\sum_{k=1}^n\frac{1}
{\#\pi_{\mathfrak{p}_k^{e_k}}(\mathcal{O}_K)^*\cdot\#G_k}\quad\textrm{by
Lemma \ref{character2}(ii)}.
\end{eqnarray*}
Therefore, there exists a character $\chi$ of
$\mathrm{Cl}(\mathfrak{f})$ which satisfies the condition
(\ref{condition}).
\end{proof}

\begin{theorem}\label{main2}
If $\mathfrak{f}=N\mathcal{O}_K$ satisfies the assumption
\textup{(\ref{assumption})} in Lemma \textup{\ref{character3}}, then
the singular value in \textup{(\ref{ringclassinvariant})} generates
$H_\mathcal{O}$ over $K$ as a real algebraic integer.
\end{theorem}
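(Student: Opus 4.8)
The plan is to repeat the proof of Theorem \ref{main} almost verbatim, the only change being that Lemma \ref{character} (which relied on the hypothesis (\ref{hypothesis})) is now replaced by Lemma \ref{character3} (which relies on (\ref{assumption})). Since the conclusion of Theorem \ref{main} was extracted from the intermediate assertion of Theorem \ref{generator} applied with $L=H_\mathcal{O}$, the first task is to re-establish that assertion in the case $L=H_\mathcal{O}$ without invoking (\ref{hypothesis}); the rest of the argument then carries over word for word.

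First I would run the proof of Theorem \ref{generator} with $L=H_\mathcal{O}$. Put $\varepsilon=\mathbf{N}_{K_\mathfrak{f}/H_\mathcal{O}}(g_\mathfrak{f}(C_0))$ and $F=K(\varepsilon)\subseteq H_\mathcal{O}$, and suppose for contradiction that $F\subsetneq H_\mathcal{O}$. By Lemma \ref{character3}, the assumption (\ref{assumption}) yields a character $\chi$ of $\mathrm{Cl}(\mathfrak{f})$ with $\chi|_{\mathrm{Cl}(K_\mathfrak{f}/H_\mathcal{O})}=1$, $\chi|_{\mathrm{Cl}(K_\mathfrak{f}/F)}\neq1$, and $\mathfrak{p}_k\mid\mathfrak{f}_\chi$ for all $k=1,\dots,n$. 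In particular $\mathfrak{f}_\chi\neq\mathcal{O}_K$ and the Euler factor of $\chi$ is $1$ by Remark \ref{remarkLandS}(i), so $S_\mathfrak{f}(\overline{\chi},g_\mathfrak{f})\neq0$ by Proposition \ref{LandS} and Remark \ref{remarkLandS}(ii). On the other hand, decomposing the sum defining $S_\mathfrak{f}(\overline{\chi},g_\mathfrak{f})$ into cosets of $\mathrm{Cl}(\mathfrak{f})$ modulo $\mathrm{Cl}(K_\mathfrak{f}/F)$ and then modulo $\mathrm{Cl}(K_\mathfrak{f}/H_\mathcal{O})$, and using the transformation law (\ref{Artin}), the triviality $\chi|_{\mathrm{Cl}(K_\mathfrak{f}/H_\mathcal{O})}=1$, the fact that $\varepsilon$ is fixed by $\mathrm{Gal}(K_\mathfrak{f}/F)$, and finally that $\chi$ descends to a nontrivial character of $\mathrm{Cl}(K_\mathfrak{f}/F)/\mathrm{Cl}(K_\mathfrak{f}/H_\mathcal{O})$ so that the corresponding inner character sum vanishes --- exactly the chain of equalities in the proof of Theorem \ref{generator} --- one gets $S_\mathfrak{f}(\overline{\chi},g_\mathfrak{f})=0$, a contradiction. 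Hence $F=H_\mathcal{O}$, so $\varepsilon$ generates $H_\mathcal{O}$ over $K$, and by Remark \ref{remarkgenerator} so does any nonzero power of $\varepsilon$.

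Next I would reproduce the computation in the proof of Theorem \ref{main}: via Propositions \ref{Gal(K_N/H_O)}, \ref{reciprocity}, \ref{TransformSiegel}(ii), the inclusion--exclusion rearrangement, and Lemma \ref{StoD}, one obtains as in (\ref{step1}) that $\varepsilon$ (respectively $\varepsilon^2$ when $N\geq3$) equals $\prod_{S\subseteq\{1,\dots,n\}}\big((N/N_S)^{12}\Delta((N/N_S)\theta)/\Delta(\theta)\big)^{N(-1)^{\#S}}$. Since $\sum_{S}(-1)^{\#S}=0$ and (for $n\geq2$) $\sum_{S\ni k}(-1)^{\#S}=0$, the factors $N^{12}$, $\Delta(\theta)$, and the $N_S$'s cancel (with the explicit bookkeeping for $n=1$ producing the leftover $p_1^{12}$), so after extracting the appropriate root the quantity in (\ref{ringclassinvariant}) is a nonzero power, up to a rational constant, of $\varepsilon$, hence a generator of $H_\mathcal{O}$ over $K$. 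Finally $\varepsilon$ is an algebraic integer by Propositions \ref{TransformSiegel}(iii) and \ref{jlemma}(i), while each $\Delta((N/N_S)\theta)/\Delta(\theta)$ is a real algebraic number by Proposition \ref{jlemma}(ii); combining these shows the invariant in (\ref{ringclassinvariant}) is a real algebraic integer.

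The main point to verify --- and really the only place any care is needed --- is that the three properties of $\chi$ furnished by Lemma \ref{character3} are exactly what the argument of Theorem \ref{generator} consumes: there the hypothesis ``$\chi(D)\neq1$ for $D\in\mathrm{Cl}(K_\mathfrak{f}/F)-\mathrm{Cl}(K_\mathfrak{f}/L)$'' is only ever used through its consequence $\chi|_{\mathrm{Cl}(K_\mathfrak{f}/F)}\neq1$, so supplying that directly (together with $\chi|_{\mathrm{Cl}(K_\mathfrak{f}/H_\mathcal{O})}=1$ and $\mathfrak{p}_k\mid\mathfrak{f}_\chi$) makes the substitution harmless. The one subtlety worth spelling out is that, because $\mathrm{Cl}(K_\mathfrak{f}/H_\mathcal{O})\subseteq\mathrm{Cl}(K_\mathfrak{f}/F)$ (as $F\subseteq H_\mathcal{O}$), a character nontrivial on $\mathrm{Cl}(K_\mathfrak{f}/F)$ but trivial on $\mathrm{Cl}(K_\mathfrak{f}/H_\mathcal{O})$ does descend to a nontrivial character of the quotient, which is precisely what forces the inner character sum --- and hence the Stickelberger element --- to vanish. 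Beyond this, there is no new obstacle.
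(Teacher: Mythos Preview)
Your proposal is correct and follows essentially the same approach as the paper's own proof: replace Lemma~\ref{character} by Lemma~\ref{character3} to obtain the character $\chi$, rerun the Stickelberger-element contradiction from Theorem~\ref{generator} verbatim to conclude $K(\varepsilon)=H_\mathcal{O}$, and then invoke the computation of Theorem~\ref{main}. Your added observation---that in Theorem~\ref{generator} the condition $\chi(D)\neq1$ is used only through its consequence $\chi|_{\mathrm{Cl}(K_\mathfrak{f}/F)}\neq1$, which is exactly what Lemma~\ref{character3} supplies---is the right way to see why the substitution is harmless.
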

\begin{proof}
Let
$\varepsilon=\mathbf{N}_{K_\mathfrak{f}/H_\mathcal{O}}(g_\mathfrak{f}(C_0))$
and $F=K(\varepsilon)$ as a subfield of $H_\mathcal{O}$. Suppose
that $F$ is properly contained in $H_\mathcal{O}$, then there is a
character $\chi$ of $\mathrm{Cl}(\mathfrak{f})$ satisfying the
condition (\ref{condition}) in Lemma \ref{character3}. Since
$\mathfrak{p}_k|\mathfrak{f}_k$ for all $k=1,\cdots,n$, the Euler
factor of $\chi$ in Proposition \ref{LandS} is $1$, and hence the
value $S_\mathfrak{f}(\overline{\chi},g_\mathfrak{f})$ does not
vanish by Remark \ref{remarkLandS}(ii). On the other hand, we can
derive $S_\mathfrak{f}(\overline{\chi},g_\mathfrak{f})=0$ by using
the condition (\ref{condition}) of $\chi$ in exactly the same way as
the proof of Theorem \ref{generator}, which gives rise to a
contradiction. Therefore $H_\mathcal{O}=K(\varepsilon)$, and hence
we can apply the argument of Theorem \ref{main} to complete the
proof.
\end{proof}

\begin{remark}\label{newremark}
Let $N$ ($\geq2$) be an integer with prime factorization
\begin{equation*}
N=\prod_{a=1}^A s_a^{u_a}\prod_{b=1}^B q_b^{v_b} \prod_{c=1}^C
r_c^{w_c},
\end{equation*}
where each $s_a$ (respectively, $q_b$ and $r_c$) splits
(respectively, is inert and ramified) in $K/\mathbb{Q}$. Then we
have the prime ideal factorization
\begin{equation*}
\mathfrak{f}=N\mathcal{O}_K=\prod_{a=1}^A
(\mathfrak{s}_a\overline{\mathfrak{s}}_a)^{u_a} \prod_{b=1}^B
\mathfrak{q}_b^{v_b} \prod_{c=1}^C \mathfrak{r}_c^{2w_c}
\end{equation*}
with
\begin{equation*}
\mathbf{N}_{K/\mathbb{Q}}(\mathfrak{s}_a)=
\mathbf{N}_{K/\mathbb{Q}}(\overline{\mathfrak{s}}_a)=s_a,~
\mathbf{N}_{K/\mathbb{Q}}(\mathfrak{q}_b)=q_b^2,~
\mathbf{N}_{K/\mathbb{Q}}(\mathfrak{r}_c)=r_c.
\end{equation*}
\par
Now, for the sake of convenience, we let
\begin{equation*}
\mathfrak{f}=\prod_{k=1}^{2A+B+C}\mathfrak{p}_k^{e_k}
\end{equation*}
with
\begin{equation}\label{defp}
(\mathfrak{p}_k,e_k)=\left\{\begin{array}{ll}(\mathfrak{s}_k,u_k) &
\textrm{for}~k=1,\cdots,A\\
(\overline{\mathfrak{s}}_{k-A},u_{k-A}) & \textrm{for}~k=A+1,\cdots,2A\\
(\mathfrak{q}_{k-2A},v_{k-2A}) & \textrm{for}~k=2A+1,\cdots, 2A+B\\
(\mathfrak{r}_{k-2A-B},2w_{k-2A-B}) & \textrm{for}~k=2A+B+1,\cdots,
2A+B+C,
\end{array}\right.
\end{equation}
and consider the surjection
\begin{equation*}
\mu_k=\widehat{v}_k\circ\iota^{-1}~:~\pi_\mathfrak{f}(\mathbb{Z})^*\longrightarrow
G_k~(\subseteq\pi_{\mathfrak{p}_1^{e_1}}(\mathcal{O}_K)^*\times
\cdots\times\pi_{\mathfrak{p}_{k-1}^{e_{k-1}}}(\mathcal{O}_K)^*\times
\pi_{\mathfrak{p}_{k+1}^{e_{k+1}}}(\mathcal{O}_K)^*\times
\cdots\times\pi_{\mathfrak{p}_{n}^{e_{n}}}(\mathcal{O}_K)^*).
\end{equation*}
\par
If $m\pmod{\mathfrak{f}}\in\pi_\mathfrak{f}(\mathbb{Z})^*$ belongs
to $\mathrm{Ker}(\mu_k)$, then
\begin{eqnarray}
(\underbrace{1,\cdots,1}_{n-1})&=&\mu_k(m\hspace{-0.3cm}\pmod{\mathfrak{f}})=\widehat{v}_k\circ\iota^{-1}(m\hspace{-0.3cm}\pmod{\mathfrak{f}})\nonumber\\
&=&(m\hspace{-0.3cm}\pmod{\mathfrak{p}_1^{e_1}},\cdots,m\hspace{-0.3cm}\pmod{\mathfrak{p}_{k-1}^{e_{k-1}}},
m\hspace{-0.3cm}\pmod{\mathfrak{p}_{k+1}^{e_{k+1}}},\cdots,
m\hspace{-0.3cm}\pmod{\mathfrak{p}_{n}^{e_{n}}}
),\phantom{1234}\label{kernelmu}
\end{eqnarray}
which shows
\begin{equation*}
\iota^{-1}(\mathrm{Ker}(\mu_k))\subseteq \iota_k(
\pi_{\mathfrak{p}_k^{e_k}}(\mathbb{Z})^*)=
\{(1,\cdots,1,t\hspace{-0.3cm}\pmod{\mathfrak{p}_k^{e_k}},1,\cdots,1)~:~
t\in\mathbb{Z}~\textrm{which is prime to}~\mathfrak{p}_k\}.
\end{equation*}
Hence, this gives the inequality
\begin{equation}\label{G_kbound}
\#G_k=\frac{\#\pi_\mathfrak{f}(\mathbb{Z})^*}{\#\mathrm{Ker}(\mu_k)}
\geq\frac{\#\pi_\mathfrak{f}(\mathbb{Z})^*}{\#\pi_{\mathfrak{p}_k^{e_k}}(\mathbb{Z})^*}.
\end{equation}
\par
In particular, if $k=1,\cdots,2A$, then $\mu_k$ becomes injective
(and so, bijective). Indeed, if
$m\pmod{\mathfrak{f}}\in\pi_\mathfrak{f}(\mathbb{Z})^*$ belongs to
$\mathrm{Ker}(\mu_k)$, then
\begin{equation}\label{equivrelation1}
m\equiv1\pmod{\mathfrak{p}_\ell^{e_\ell}}\quad\textrm{for}~\ell\neq
k
\end{equation}
by (\ref{kernelmu}). But, since $m$ is an integer,
(\ref{equivrelation1}) implies
\begin{equation}\label{equivrelation2}
m\equiv1\pmod{\overline{\mathfrak{p}}_\ell^{~e_\ell}}\quad\textrm{for}~\ell\neq
k.
\end{equation}
On the other hand, since
$\mathfrak{p}_k=\overline{\mathfrak{p}}_{k+A}$ or
$\overline{\mathfrak{p}}_{k-A}$ by the definition (\ref{defp}), we
deduce by (\ref{equivrelation1}) and (\ref{equivrelation2}) that
\begin{equation*}
m\equiv1\pmod{\mathfrak{p}_\ell^{e_\ell}}\quad\textrm{for
all}~\ell=1,\cdots,n,
\end{equation*}
from which we get $m\equiv1\pmod{\mathfrak{f}}$. This concludes that
$\mu_k$ is injective; hence
\begin{equation}\label{splitcase}
\#G_k=\#\pi_\mathfrak{f}(\mathbb{Z})^*\quad\textrm{for}~
k=1,\cdots,2A.
\end{equation}
\par
Thus we achieve by (\ref{G_kbound}), (\ref{splitcase}) and the Euler
functions for integers and ideals that
\begin{eqnarray*}
\textrm{(LHS) of
(\ref{assumption})}\leq4\sum_{a=1}^{A}\frac{1}{(s_a-1)s_a^{u_a-1}}
+2\sum_{b=1}^B\frac{1}{(q_b+1)q_b^{v_b-1}}
+2\sum_{c=1}^C\frac{1}{r_c^{w_c}}.
\end{eqnarray*}
Therefore, one can also apply Theorem \ref{main2} under the
assumption
\begin{equation}\label{newassumption3}
4\sum_{a=1}^{A}\frac{1}{(s_a-1)s_a^{u_a-1}}
+2\sum_{b=1}^B\frac{1}{(q_b+1)q_b^{v_b-1}}
+2\sum_{c=1}^C\frac{1}{r_c^{w_c}}<1.
\end{equation}
\end{remark}

\begin{example}\label{exer3}
Let $K=\mathbb{Q}(\sqrt{-2})$ and $\mathcal{O}$ be the order of
conductor $N=9$ ($=3^2$) in $K$. Then, $N\mathcal{O}_K$ satisfies
the assumption (\ref{newassumption3}) in Remark \ref{newremark}
(but, not the assumption (\ref{hypothesis}) in Lemma
\ref{character}), and hence the singular value
$3^{12}\Delta(9\theta)/\Delta(3\theta)$ with $\theta=\sqrt{-2}$
generates $H_\mathcal{O}$ over $K$ by Theorem \ref{main2}. Since
$h_K=1$ (\cite[p.29]{Cox}), one can estimate its minimal polynomial
in exactly the same way as Example \ref{exer1}:
\begin{eqnarray*}
&&\min(3^{12}\Delta(9\theta)/\Delta(3\theta),K)\\&=&
X^6+52079706X^5+2739284675932815X^4+12787916715651570220X^3\\
&&+190732505724302106460815X^2-268398119546256294X+1.
\end{eqnarray*}
\end{example}

\bibliographystyle{amsplain}

\end{document}